\begin{document}

% define theorem environments
\newtheorem{theorem}{Theorem}    %[section]
\newtheorem{proposition}[theorem]{Proposition}
\newtheorem{conjecture}[theorem]{Conjecture}
\def\theconjecture{\unskip}
\newtheorem{corollary}[theorem]{Corollary}
\newtheorem{lemma}[theorem]{Lemma}
\newtheorem{observation}[theorem]{Observation}
\theoremstyle{definition}
\newtheorem{definition}[theorem]{Definition}
\newtheorem{notation}[theorem]{Notation}
\newtheorem{remark}[theorem]{Remark}
\newtheorem{question}[theorem]{Question}

\numberwithin{theorem}{section}
%\numberwithin{definition}{section}
\numberwithin{equation}{section}

\def\earrow{{\mathbf e}}
\def\rarrow{{\mathbf r}}
\def\uarrow{{\mathbf u}}
\def\tpar{{\bf T}}
\def\apar{{\bf A}}

\def\reals{{\mathbb R}}
\def\torus{{\mathbb T}}
\def\heis{{\mathbb H}}
\def\integers{{\mathbb Z}}
\def\complex{{\mathbb C}\/}
\def\naturals{{\mathbb N}\/}
\def\distance{\operatorname{distance}\,}
\def\diff{\operatorname{Diff}\,}
\def\dist{\operatorname{dist}\,}
\def\Span{\operatorname{span}\,}
\def\degree{\operatorname{degree}\,}
\def\kernel{\operatorname{kernel}\,}
\def\dim{\operatorname{dim}\,}
\def\codim{\operatorname{codim}}
\def\trace{\operatorname{trace\,}}
\def\Span{\operatorname{span}\,}
\def\ZZ{ {\mathbb Z} }
\def\p{\partial}
\def\rp{{ ^{-1} }}
\def\Re{\operatorname{Re\,} }
\def\Im{\operatorname{Im\,} }
\def\ov{\overline}
\def\eps{\varepsilon}
\def\lt{L^2}
\def\diver{\operatorname{div}}
\def\curl{\operatorname{curl}}
\def\etta{\eta}
\newcommand{\norm}[1]{ \|  #1 \|}
\def\Span{\operatorname{span}}

\newcommand{\Norm}[1]{ \left\|  #1 \right\| }
\newcommand{\set}[1]{ \left\{ #1 \right\} }
\def\one{\mathbf 1}

\def\scriptf{{\mathcal F}}
\def\scriptg{{\mathcal G}}
\def\scriptm{{\mathcal M}}
\def\scriptb{{\mathcal B}}
\def\scriptc{{\mathcal C}}
\def\scriptt{{\mathcal T}}
\def\scripti{{\mathcal I}}
\def\scripte{{\mathcal E}}
\def\scriptv{{\mathcal V}}
\def\scriptl{{\mathcal L}}
\def\frakv{{\mathfrak V}}

\begin{comment}
\def\scriptx{{\mathcal X}}
\def\scriptj{{\mathcal J}}
\def\scriptr{{\mathcal R}}
\def\scripts{{\mathcal S}}
\def\scripta{{\mathcal A}}
\def\scriptk{{\mathcal K}}
\def\scriptd{{\mathcal D}}
\def\scriptp{{\mathcal P}}
\def\scriptl{{\mathcal L}}
\def\scriptv{{\mathcal V}}
\def\scripth{{\mathcal H}}
\def\scriptn{{\mathcal N}}
\def\frakg{{\mathfrak g}}
\def\frakG{{\mathfrak G}}
\def\boldn{\mathbf N}
\end{comment}

\author{Michael Christ}
\address{
        Michael Christ\\
        Department of Mathematics\\
        University of California \\
        Berkeley, CA 94720-3840, USA}
\email{mchrist@math.berkeley.edu}
\thanks{The author was supported in part by NSF grants DMS-040126 and DMS-0901569.}

\date{April 16, 2009. Revised  June 3, 2011.}

\title %[Existence of Extremals]
{On Extremals For a Radon-like Transform}
% for certain linear and multilinear inequalities

\begin{abstract}
The operator defined by convolution, in $\reals^d$, with
(affine) surface measure on a paraboloid 
satisfies a dilation-invariant $L^p\to L^q$ inequality,
and enjoys a high-dimensional group of symmetries.
Extremal functions are shown to exist for this inequality.
Moreover, any extremizing sequence is shown to be precompact 
modulo this symmetry group. 
\end{abstract}

\maketitle
%{\Small \tableofcontents}

\section{Introduction}
% \subsection{An operator}

If $X,Y$ are Banach spaces, and if $T:X\to Y$ is a bounded
linear operator, 
then by an extremal 
vector for the associated inequality $\norm{Tx}_Y\le C\norm{x}_X$
is meant a nonzero vector $x\in X$
such that $\norm{Tx}_Y = \norm{T}\cdot \norm{x}_X$. 
The following are natural questions:
\begin{enumerate}
\item
What is the norm of $T$?
\item
Does there exist at least one extremal vector $x$?
\item
Are extremal vectors unique, modulo scalar multiplication, or modulo
explicitly known symmetries enjoyed by $T$?
If not, then what is the set of all extremal vectors?
\item If $x$ is a {\em quasiextremal} vector in the sense that
$\norm{Tx}_Y \ge c\norm{T}\norm{x}_X$ for a specified
scalar $c<1$,
then what can be said quantitatively about the structure of $x$?
There are three natural regimes for this question:
(a) for fixed $c$, (b) as $c\to 0$,
and (c) as $c\to 1$. 
\item
What qualititative properties do extremal vectors enjoy?
\item
One can likewise define extremal and quasiextremal pairs
$(x,y)\in X\times Y^*$, where $Y^*$ is the dual of $Y$;
an extremal pair is one which satisfies
$|y(Tx)|= \norm{T}\norm{y}_{Y^*}\norm{x}_X$.
Each of questions (2) through (5) has an analogue for
such pairs.
\end{enumerate}

This paper investigates these questions,
for a specific operator which arises
in Euclidean harmonic analysis.
This operator is modestly fundamental,
as a basic example of a Fourier integral
operator, of a Radon-like transform, and of an
object whose analytic properties are governed by
geometry and combinatorics. Within those frameworks
it occupies a unique niche, attested to by
an associated high-dimensional symmetry group,
connected to the Heisenberg group and metaplectic
representation. 

Regard $\reals^d$ as $\reals^{d-1}\times\reals$
with coordinates $x=(x',x_d)$.
Let $\sigma$ be the singular measure on $\reals^d$,
supported on the paraboloid $\{x: x_d=|x'|^2\}$,
defined by 
$\int f\,d\sigma
= \int_{\reals^{d-1}} f(x',|x'|^2)\,dx'$.
$\sigma$ and surface measure on the paraboloid
$x_d=|x'|^2$ are mutually absolutely continuous, but $\sigma$
enjoys a certain dilation symmetry which surface measure lacks.
See \S\ref{section:affine} for discussion of a general context for
such a measure.

The convolution operator and inequality under discussion are:
\begin{gather} 
\tpar f = f*\sigma
\\
\norm{\tpar f}_{L^{d+1}(\reals^d)}
\le \apar \norm{f}_{L^{(d+1)/d}(\reals^d)}
\label{eq:paraboloid}
\end{gather}
for all $f\in C^0\cap L^\infty\cap L^1(\reals^d)$.
Here $\apar<\infty$ denotes  the optimal constant in the inequality,
which depends only on the dimension $d$.
For some discussion of the fundamental nature of
this inequality, see \cite{quasiextremal}.

In this paper we establish the existence of extremals,
and of extremal pairs, for 
% ??! $T_{\rm sphere}$ and 
$\tpar$ in all dimensions. 
Moreover,
we obtain uniform quantitative information about the behavior
of extremals, and of $(1-\delta)$-quasiextremals with $\delta$ close to $0$. 
We show that sequences of approximate extremizers are 
precompact, after renormalization by an explicitly described symmetry group.
This complements our earlier work \cite{quasiextremal}
concerning $c$-quasiextremals, in the regime
where $c$ is not close to $1$.
Various natural questions remain open:
We neither identify these extremals  and the optimal constant $\apar$, 
nor address the question of their uniqueness modulo symmetries, 

One of our objectives is
to begin to develop techniques which 
should be useful in an analysis of extremals for
other closely related operators. 
See \cite{christshao} for one such problem.
This paper is the second in a series
treating aspects of the meta-question: If the ratio
$\Phi(f)=\|Tf\|_{q}/\|f\|_p$ is large, then what are the 
properties of $f$? The word ``large'' admits various interpretations.
The initial work \cite{quasiextremal} is a study of those functions
$f$ for which the $\Phi(f)$ is bounded below by some positive constant.
In \cite{christxue}, qualitative properties of arbitrary critical
points of $\Phi$ are studied.
The paper \cite{christradonextremals} demonstrates an equivalence
between the inequality studied here, and a certain inequality for
the Radon transform, and explicitly identifies all extremizers for both.
This equivalence, together with an rearrangement inequality proved in
\cite{christkplane} and special considerations for the subclass of all
radially symmetric functions, could be used to give an alternative proof
of the existence of extremizers for $\Phi$. However, the arguments given
in the present paper, which ultimately rely on qualitative rather than exact
symmetries, are more general and therefore retain some interest.  
One problem which has such qualitative but not exact symmetries is studied in \cite{christshao}.

%in particular,
%for the adjoint Fourier restriction for
%the unit sphere in $\reals^3$, and for convolution with surface measure on 
%the sphere in $\reals^d$, for all $d\ge 2$.
We are indebted to Ren\'e Quilodr\'an and to Shuanglin Shao for useful advice on the exposition,
and to Terence Tao and Shuanglin Shao for posing related questions to us.

\section{Results}
Define
\[
\apar = \sup_{\norm{f}_{(d+1)/d}=1} \norm{\tpar f}_{d+1}.
\]
\begin{definition}
An extremizer for the inequality \eqref{eq:paraboloid}
is a function $f\in L^{(d+1)/d}(\reals^d)$ which satisfies
$\norm{\tpar f}_{L^{d+1}} = \apar\norm{f}_{L^{(d+1)/d}}\ne 0$.

An extremizing sequence for the inequality \eqref{eq:paraboloid}
is a sequence of nonnegative functions $f_\nu\in L^{(d+1)/d}$ satisfying 
\begin{align}
&\norm{f_\nu}_{L^{(d+1)/d}}\equiv 1
\\
&\norm{\tpar f_\nu}_{L^{d+1}}\to \apar.
\end{align}

For any $\delta\in[0,1)$,
$f\in L^{(d+1)/d}$ is a $(1-\delta)$-quasiextremal for \eqref{eq:paraboloid}
if 
\begin{equation}
\norm{\tpar f}_{L^{d+1}} \ge (1-\delta)\apar\norm{f}_{L^{(d+1)/d}}\ne 0.
\end{equation}
\end{definition}

%To simplify notation we write
%\[p=p =\frac{d+1}{d} \text{ and }  q=q_d= d+1\]
%throughout the paper.

In Definition~\ref{defn:symmetries} below we will introduce a group
$\scriptg_d$ of diffeomorphisms of $\reals^d$ which are natural symmetries
of our problem. Associated to each $\phi\in\scriptg_d$
is an invertible linear operator $\phi^*$
on $L^{(d+1)/d}(\reals^{d})$ satisfying 
$\norm{\phi^* f}_{(d+1)/d} = 
\norm{f}_{(d+1)/d}$ and 
$\norm{\tpar(\phi^* f)}_{d+1} = \norm{\tpar(f)}_{d+1}$ for all $f\in L^{(d+1)/d}$. 
Thus for any sequence of functions $f_\nu$ and elements
$\phi_\nu\in\scriptg_d$,
$(f_\nu)$ is an extremizing sequence if and only if
$(\phi_\nu^* f_\nu)$ is an extremizing sequence;
extremizing sequences can only be characterized modulo the
action of $\scriptg_d$.

\begin{theorem} \label{thm:main}
\noindent
(i)
There exist extremizers for the inequality \eqref{eq:paraboloid}.

\noindent
(ii)
Let $\{f_\nu\}$ be any extremizing sequence for the inequality \eqref{eq:paraboloid}.
Then there exist  an extremal $f$ for \eqref{eq:paraboloid} 
satisfying $\norm{f}_{(d+1)/d}=1$, 
a subsequence $\{f_{\nu_i}\}$, 
and a sequence of symmetries $\phi_i\in {\scriptg}_d$ such that
%$\phi_i^*(f_{\nu_i})\rightharpoonup f$ weakly 
%and $(\phi_i^*(f_{\nu_i}))^p \rightharpoonup f^p$ weakly. 
% ?? 
$\phi_i^*(f_{\nu_i})\to f$ in $L^{(d+1)/d}$ norm.

\noindent
(iii)
There exist a constant $C_0<\infty$,
a function $\Psi:[0,\infty)\to[0,\infty)$
satisfying $\Psi(t)\ge t^{(d+1)/d}$ for all $t$
and
$\Psi(t)/t^{(d+1)/d}\to\infty$ as $t\to 0^+$
and also as $t\to\infty$,
and a function $\rho:[1,\infty)\to(0,\infty)$
satisfying $\rho(R)\to 0$ as $R\to\infty$,
with the following property.
For any 
extremizing sequence $\{f_\nu\}$
for the inequality \eqref{eq:paraboloid},
there exists a sequence of elements $\phi_\nu\in {\scriptg}_d$ 
such that for all sufficiently large $\nu$,
$\phi_\nu^* f_\nu$ can be decomposed as
$g_\nu+h_\nu$
so that $\norm{h_\nu}_{(d+1)/d}\to 0$,
\begin{equation}\label{concl:higherintegrability}
\int_{\reals^d} \Psi(g_\nu)\le C_0;
%\norm{\phi_\nu^*F_\nu}_{L^\infty}\le A
%\text{ and } 
%\norm{\phi_\nu^*F_\nu}_{L^r}\le A_r.
\end{equation}
for all $R\ge 1$,
\begin{equation} \label{concl:decay} 
\int_{|x|\ge R}  g_\nu(x)^{(d+1)/d}\,dx\le \rho(R).
\end{equation}
Moreover
for any $R\ge 1$,
for all $\nu\ge\nu(R)$,
$g_\nu$ may be further decomposed as 
$g^\sharp_\nu+ g^\flat_\nu$
where both summands continue to satisfy
\eqref{concl:higherintegrability} and \eqref{concl:decay},
and moreover
\begin{equation} \label{concl:smoothness}
\norm{g^\sharp_\nu}_{C^1}\le R
\text{ and } 
\norm{g^\flat_\nu}_{L^{(d+1)/d}}\le \eta(R)
\end{equation}
where $\eta(R)\to 0$ as $R\to\infty$,
and the function $\eta$ is independent of the extremizing sequence.

\noindent
(iv)
For any nonnegative extremizer $f$ for \eqref{eq:paraboloid}
satisfying $\norm{f}_{(d+1)/d}=1$,
there exists $\phi\in {\scriptg}_d$ such that
$\phi^* f$ satisfies \eqref{concl:higherintegrability},
\eqref{concl:decay}, and \eqref{concl:smoothness}.
%$\int\Psi(\phi^* f)\le C_0$
%and for all $R\ge 1$,
%$\int_{|x|\ge R}(\phi^* f)^{(d+1)/d}\,dx\le\rho(R)$.

\noindent
(v)
Any complex-valued extremizer for \eqref{eq:paraboloid}
agrees almost everywhere with $e^{i\theta}f$ for some
$C^\infty$ nonnegative function $f$ and some constant $\theta\in\reals$.

\noindent 
(vi)
Any nonnegative extremizer for inequality \eqref{eq:paraboloid}
with $\norm{f}_{(d+1)/d}=1$
satisfies the Euler-Lagrange equation
\begin{equation} \label{eulerlagrange}
\tpar^*([\tpar f]^d)=\apar^{d+1} f^{1/d}
\end{equation}
almost everywhere. 
For any compact set $K\subset\reals^d$
there exists $c>0$ such that
$f(x)\ge c$ for almost every $x\in K$.
\end{theorem}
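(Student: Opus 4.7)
The overall strategy is a concentration-compactness argument organized around the non-compact symmetry group $\scriptg_d$, using the quasi-extremal structure theorem of \cite{quasiextremal} as its main external input. Since $\sigma\ge 0$, the pointwise inequality $|\tpar f|\le\tpar|f|$ allows us to assume each $f_\nu\ge 0$ in parts (i)--(iv); part (v) will then come from analyzing the equality case of this inequality. Throughout, part (iii) is the heart of the argument; parts (i), (ii), (iv) are corollaries, and parts (v), (vi) a subsequent analysis of the limit.

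I begin with (iii). The structural theorem of \cite{quasiextremal}, applied for any fixed $c<1$ to a $c$-quasi-extremal, produces symmetries in $\scriptg_d$ that normalize the function to a comparable-to-constant piece on a bounded set, modulo a small remainder. Applied to $f_\nu$ at thresholds $c_\nu\nearrow 1$ and diagonalized in $\nu$, this yields symmetries $\phi_\nu\in\scriptg_d$ and a preliminary decomposition $\phi_\nu^* f_\nu=g_\nu+h_\nu$ with $\norm{h_\nu}_{(d+1)/d}\to 0$ and $g_\nu$ already normalized at the bulk scale. The bound \eqref{concl:higherintegrability} then follows by contradiction: if a portion of $g_\nu$ persisted at arbitrarily large or arbitrarily small amplitudes, a scaling symmetry would separate that portion from the bulk, and a quantitative superadditivity estimate would force $\norm{\tpar f_\nu}_{d+1}$ to lose a definite amount, contradicting the extremizing hypothesis. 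The spatial decay \eqref{concl:decay} is obtained identically, with translation symmetries in place of scalings. For \eqref{concl:smoothness}, the Euler-Lagrange identity \eqref{eulerlagrange} is used as a regularization principle: two applications of $\tpar$ combined into $\tpar^*\tpar$ gain the curvature regularity of the paraboloid in transverse directions, and iteration produces Sobolev regularity of any finite order; a frequency cutoff at height $R$ then yields the $C^1$-bounded piece $g_\nu^\sharp$ and a remainder $g_\nu^\flat$ of vanishing $L^{(d+1)/d}$ norm.

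Given (iii), parts (i) and (ii) follow at once: extract a weakly convergent subsequence of $\phi_\nu^* f_\nu$ in $L^{(d+1)/d}$; uniform higher integrability and spatial decay prevent loss of mass in the weak limit, upgrading the convergence to strong convergence to a limit $f$ with $\norm{f}_{(d+1)/d}=1$ and $\norm{\tpar f}_{d+1}=\apar$. Part (iv) is (iii) applied to the constant sequence $f_\nu\equiv f$. For (v), if $f$ is complex then $|f|$ is also an extremizer, and saturation of $|\tpar f|\le\tpar|f|$ forces $f$ to have constant phase on the support of $\tpar|f|$; the smoothness then follows by bootstrapping \eqref{eulerlagrange}. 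For (vi), the Euler-Lagrange equation is a standard Lagrange-multiplier computation for the constrained maximization, the multiplier $\apar^{d+1}$ being identified by pairing the equation against $f$ and integrating. The pointwise lower bound on compact sets holds because $\tpar^*([\tpar f]^d)$ is a convolution of a nontrivial nonnegative $L^{(d+1)/d}$ function with the reflected measure, and this convolution is continuous and uniformly positive on any compact set.

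The principal obstacle is the higher-integrability step of (iii). The structural theorem of \cite{quasiextremal} by itself does not exclude the persistence of multiple concentration scales in the limit, and it is here that the near-extremal hypothesis must be exploited through a quantitative superadditivity estimate that forces a single-scale decomposition at the cost of a definite loss in $\norm{\tpar f_\nu}_{d+1}$.
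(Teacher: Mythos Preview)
Your outline captures the correct architecture—(iii) is the core and (i), (ii), (iv) follow—but there are two substantive gaps and one methodological divergence worth flagging.

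\medskip
\textbf{Gap in (i)--(ii).} Your passage from weak to strong convergence is incomplete. Uniform higher integrability and spatial decay of $g_\nu$ (i.e., tightness and uniform integrability of $g_\nu^p$) rule out concentration and escape to spatial infinity, but they do \emph{not} by themselves prevent oscillatory loss of mass under weak $L^p$ convergence; one can still have $\|g\|_p<1$ for the weak limit. What is missing is the local smoothing property of $\tpar$: for compactly supported $\eta$, the operator $f\mapsto\eta\,\tpar(\eta f)$ maps $L^2$ to a positive-order Sobolev space, so Rellich's lemma upgrades weak convergence of the truncates $g_{\nu,\lambda}$ to strong $L^2$ convergence of $\tpar g_{\nu,\lambda}$. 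Paired against a dual near-extremizing sequence $F_\nu$ enjoying the same uniform bounds, this yields $\langle F_\nu,\tpar g_\nu\rangle\to\langle F,\tpar g\rangle=\apar$, which forces $\|g\|_p=1$. The paper then upgrades to norm convergence by a separate argument that uses the Euler--Lagrange equation and the strict positivity of extremizers; a Radon--Riesz argument would also work once $\|g\|_p=1$ is known, but you have not established that.

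\medskip
\textbf{Gap in \eqref{concl:smoothness}.} You invoke \eqref{eulerlagrange} to regularize $g_\nu$, but $g_\nu$ is only near-extremal, not an extremizer, so the Euler--Lagrange equation is unavailable. The paper's argument is different: split $g=g^\sharp+g^\flat$ by a smooth Fourier cutoff at height $\rho$, split $\tpar^* F$ similarly, observe that the high-frequency part of $\tpar^* F$ has small $L^q$ norm (this is where the smoothing of $\tpar$ enters), and then use that both $g$ and $\tilde g=g^\sharp-g^\flat$ are near-extremal together with the elementary inequality $\tfrac12|s+t|^p+\tfrac12|s-t|^p\ge|s|^p+c_p\min(|s|^{p-2}t^2,|t|^p)$ to squeeze $\|g^\flat\|_p$.

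\medskip
\textbf{A different route to higher integrability.} Your plan for \eqref{concl:higherintegrability} and \eqref{concl:decay} via ``superadditivity under scale/location separation'' is a legitimate profile-decomposition alternative, but it is not what the paper does. The paper instead exploits the Lorentz-space improvement $\tpar:L^{p,r}\to L^q$ for $p<r<q$ to get an entropy bound on the number of significant dyadic level sets, and then a comparison lemma (if $\tpar\chi_E$ and $\tpar\chi_{E'}$ are both large on a common set then $|E|$ and $|E'|$ are comparable) to bound the \emph{spread} of the level-set indices. For spatial localization the paper uses the geometry of paraballs and a lemma asserting that distant paraballs interact weakly under $\tpar$. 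Your superadditivity step would in the end require exactly this weak-interaction statement, so the hard geometric input is the same even though the packaging differs.
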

\noindent
In conclusion {\it (vi)}{}, $\tpar^*$ denotes the transpose of $\tpar$.

\begin{remark}
Extremizers of \eqref{eq:paraboloid} do not belong to the Schwartz class.
Indeed, if $f$ is a continuous nonnegative solution of the Euler-Lagrange equation
\eqref{eulerlagrange} which does not vanish identically,
then $\limsup_{|x|\to\infty} |x|^{d/2}f(x)>0$.
For $\tpar f$ is bounded away from zero in some
ball $B$. The equation and nonnegativity together force
$f^{1/d}\ge c \tpar^*(\chi_B)$, and consequently $f(x',x_d)\ge c(1+|x'|)^{-d}$
for all $(x',x_d)$ belonging to a certain tubular neighborhood,
of constant width, of a paraboloid.
In such a region, $|x'|\sim |x|^{1/2}$ as $|x|\to\infty$.
\end{remark}

Quasiextremals for the inequality \eqref{eq:paraboloid} were studied 
in \cite{quasiextremal}. 
These are by definition functions which satisfy $\norm{\tpar f}_{d+1}
\ge \eps \norm{f}_{(d+1)/d}$ for an arbitrary constant $\eps>0$;
$\eps$ need not be close to the optimal constant $\apar$.
The analytic techniques
introduced there were further developed in \cite{betsyquasiextremal}
to treat quasiextremals for the corresponding inequality for $T_{\rm sphere}$.
These techniques form the basis of the present paper.
However, we have structured the exposition to emphasize certain geometric
facets of the subject, especially the symmetry group and family of paraballs introduced
in \S\ref{section:preliminaries}, which were less fully developed
in \cite{quasiextremal}. 

Throughout the paper we assume that the dimension of the
ambient space $\reals^d$ satisfies $d\ge 2$.
Define 
\[p =\frac{d+1}{d} \text{ and }  q= d+1.\]
Since $|T(f)|\le T(|f|)$ and $\norm{|f|}_{p}= \norm{f}_p$,
there exist extremal functions for the inequality in question
if and only if there exist nonnegative extremal functions;
we assume henceforth, without loss of generality and without
further comment, that all functions under discussion are nonnegative.
% ?? Must reword that
$c,C,\gamma$ will denote finite positive constants which depend only
on the dimension $d$, and which are permitted to change values from
one occurrence to the next. Typically $c$ will be small, while $C$ will
be large.

%The main point of the proof is to establish some precompactness
%of an arbitrary extremizing sequence.
%This will be accomplished in several steps.

\section{Preliminaries} \label{section:preliminaries}
We begin by collecting various facts which will be used in the analysis.

\subsection{Symmetries}

The operator $\tpar$ enjoys a relatively high-dimensional
Lie group of symmetries.
Let $\Theta:\reals^{d+d}\to\reals$ be the function
\[
\Theta(x,y) = x_d-y_d -|x'-y'|^2
\]
and let
$\scripti$ be the incidence manifold
$\scripti=\{(x,y)\in\reals^{d+d}: \Theta(x,y)=0\}$. 
$\diff(\reals^n)$ denotes the group of all $C^\infty$
diffeomorphisms of $\reals^n$.

\begin{definition}\label{defn:symmetries}
$\scriptg_{d,d}$ denotes the set of all
ordered pairs $(\phi,\psi)\in\diff(\reals^d)\times\diff(\reals^d)$
which preserve $\scripti$ in the strong sense
%\footnote{
%Here I am deliberately making life easy for myself with this
%definition. More natural would be to simply require that the set
%$\scripti$ be preserved.
%I expected this to be equivalent to the above definition,
%but have not succeeded in proving that.
%} 
that there exists $0\ne\lambda\in\reals$ such that
\begin{equation} \label{eq:preserveincidence}
\Theta(\phi(x),\psi(y))=\lambda\Theta(x,y)
\text{ for all $(x,y)\in\reals^{d+d}$}.
\end{equation}

$\scriptg_d$ denotes the set of all $\phi\in\diff(\reals^d)$
for which there exists $\psi$ such that $(\phi,\psi)\in\scriptg_{d,d}$.
\end{definition}
\noindent
In particular, $(x,y)\in\scripti\Rightarrow (\phi(x),\psi(y))\in\scripti$.

%Define $(\Phi_u(x),\Psi_u(y)) 
%= (x'+u,x_d+2u\cdot x'+|u|^2;y',y_d+2u\cdot y')$, for each $u\in\reals^{d-1}$.

The following are examples of elements of $\scriptg_{d,d}$:
\begin{itemize}
\item 
$(\phi(x),\psi(y)) = 
(x+v,y+v)$, for any $v\in\reals^d$
\item 
$(\phi(x),\psi(y)) = 
(rx',r^2 x_d; ry',r^2 y_d)$, for any $r\in\reals\setminus\{0\}$
\item
$(\phi(x),\psi(y)) = 
(x'+u,x_d+2u\cdot x'+|u|^2; y',y_d+2u\cdot y')$,
for any $u\in\reals^{d-1}$
\item
$(\phi(x),\psi(y)) = 
(L(x'),x_d +|Lx'|^2-|x'|^2; L^\dagger(y'),y_d-|L^\dagger(y')|^2+|y'|^2)$,
for any invertible linear transformation $L:\reals^{d-1}\to\reals^{d-1}$,
where $L^\dagger$ is the inverse of the transpose of $L$.
\end{itemize}

Denote by $\tau$ the involution $x\mapsto -x$ of $\reals^d$.

\begin{lemma}
(i) The correspondence $(\phi,\psi)\leftrightarrow\phi$ is a bijection
of $\scriptg_{d,d}$ with $\scriptg_d$.

(ii)
If $(\phi,\psi)\in\scriptg_{d,d}$ then
$\tau\circ\psi\circ\tau\in\scriptg_d$.

(iii)
Every $\phi\in\scriptg_d$ is of the form
$(x',x_d)\mapsto (Lx',Ax_d+Q(x'))$
for some invertible affine endomorphism $L$ of $\reals^{d-1}$,
some invertible affine endomorphism $A$ of $\reals^1$, and some polynomial
$Q:\reals^{d-1}\to\reals$ of degree $\le 2$.
The pair $(L,A)$ is unrestricted. $Q$ 
takes the form $Q=q+a$ where $q$ is a homogeneous polynomial
of degree $2$ uniquely determined
by $(L,A)$, while $a$ is an arbitrary affine mapping.
\end{lemma}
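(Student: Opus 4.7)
My plan is to extract the structure from the defining identity $\Theta(\phi(x),\psi(y))=\lambda\Theta(x,y)$ by coordinate expansion and differentiation. Writing $\phi(x)=(\phi_1(x),\phi_2(x))$ and $\psi(y)=(\psi_1(y),\psi_2(y))$ with $\phi_1,\psi_1\in\reals^{d-1}$, expanding $|\phi_1(x)-\psi_1(y)|^2$ and grouping $x$-only and $y$-only terms, the identity rearranges to
\[
F_0(x)-G_0(y)=2\lambda\,x'\cdot y'-2\phi_1(x)\cdot\psi_1(y),
\]
with $F_0(x)=\phi_2(x)-|\phi_1(x)|^2-\lambda x_d+\lambda|x'|^2$ and $G_0(y)=\psi_2(y)+|\psi_1(y)|^2-\lambda y_d-\lambda|y'|^2$. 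Applying $\partial_{x_d}$ yields $\partial_{x_d}F_0(x)=-2\,\partial_{x_d}\phi_1(x)\cdot\psi_1(y)$; since $\psi_1$, being the first $d-1$ components of a diffeomorphism of $\reals^d$, is surjective onto $\reals^{d-1}$, the right side is $y$-independent only when $\partial_{x_d}\phi_1\equiv0$. Hence $\phi_1=\phi_1(x')$ and $\phi_2=\lambda x_d+h(x')$; symmetrically $\psi_1=\psi_1(y')$ and $\psi_2=\lambda y_d+k(y')$.

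The residual identity in primed variables reads $F(x')-G(y')=2\lambda\,x'\cdot y'-2\phi_1(x')\cdot\psi_1(y')$. Taking $\partial_{x'_i}\partial_{y'_j}$ gives $D\phi_1(x')^T D\psi_1(y')=\lambda I$, and as each side depends only on one variable, both Jacobians are constant. Thus $\phi_1(x')=L_0x'+b$ and $\psi_1(y')=M_0y'+c$ are affine with $M_0=\lambda L_0^\dagger$. Substituting back, the identity forces $h$ to be a polynomial of degree at most $2$ whose homogeneous quadratic part is exactly $q(x')=(x')^T(L_0^TL_0-\lambda I)x'$; the linear and constant parts of $h$ encode an arbitrary affine summand $a$ in $Q$ and a translation in $A$, and the matching data for $k$ (hence $\psi_2$) is determined by linear equations in $(L_0,b,c,\lambda,h)$. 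This establishes the structural half of (iii) and simultaneously shows that $\psi$ is determined uniquely by $\phi$, which is the injective part of (i) (surjectivity being by definition of $\scriptg_d$). Conversely, for any admissible triple $(L,A,a)$ the same formulas yield an explicit candidate $\psi$, and direct substitution into $\Theta(\phi(x),\psi(y))$ verifies the identity with the required $\lambda$, so every such triple is realized.

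For (ii), the symmetry $\Theta(-u,-v)=\Theta(v,u)$, which is immediate from the definition of $\Theta$, yields
\[
\Theta\bigl((\tau\psi\tau)(x),(\tau\phi\tau)(y)\bigr)=\Theta(-\psi(-x),-\phi(-y))=\Theta(\phi(-y),\psi(-x))=\lambda\Theta(-y,-x)=\lambda\Theta(x,y),
\]
so $(\tau\psi\tau,\tau\phi\tau)\in\scriptg_{d,d}$, and in particular $\tau\psi\tau\in\scriptg_d$.

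The main obstacle is the bookkeeping in the second paragraph: one must carefully track how the linear-in-$x'$ terms on both sides are absorbed by $b$ and $c$, and verify that the quadratic part of $h$ is canonically $q(x')=(x')^T(L_0^TL_0-\lambda I)x'$ independent of the choice of the affine correction $a$. Everything else either reduces to a direct substitution or follows from the simple observation that a component of a diffeomorphism of $\reals^d$ onto $\reals^{d-1}$ is surjective.
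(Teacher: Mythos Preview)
Your proof is correct and follows essentially the same strategy as the paper's: extract the structure of $\phi,\psi$ by differentiating the defining identity and using separation of variables, then read off that $\psi$ is determined by $\phi$. The organization differs slightly --- you first rearrange algebraically and apply $\partial_{x_d}$ to isolate the $x_d$-independence of $\phi_1$ (using surjectivity of $\psi_1$), then take mixed partials in the primed variables, whereas the paper applies mixed partials $\partial^2/\partial x_i\partial y_j$ directly to the original equation and invokes the submersion property of $F=\phi_1$ --- but the underlying mechanism is the same, and your explicit computation for (ii) via $\Theta(-u,-v)=\Theta(v,u)$ is a clean unpacking of what the paper calls ``a direct consequence of the definitions.''
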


\begin{proof}
(ii) is a direct consequence of the definitions.

To establish (iii), write $\phi(x) = (F(x),f(x))\in\reals^{d-1}\times\reals$
and similarly  $\psi(y) = (G(y),g(y))$.
To the equation 
\[f(x)-g(y)-|F(x)-G(y)|^2= \lambda \big(x_d-y_d-|x'-y'|^2 \big)\]
apply any mixed second partial derivative
$\frac{\partial^2}{\partial x_i\partial y_j}$
to conclude that 
\[
\Big\langle
\frac{\partial F(x)}{\partial x_i}\ ,\ \frac{\partial G(y)}{\partial y_j}
\Big\rangle
\text{ is independent of $x,y$}.
\]
Since $\reals^d\owns x\mapsto F(x)\in\reals^{d-1}$ is a submersion
at every $x$, it follows that $\nabla G(y)$ is independent of $y$;
similarly $\nabla F(x)$ is independent of $x$. Thus $F,G$
are affine functions. 

If $i=d$ or $j=d$ then $\partial^2/\partial x_i\partial y_j$
annihilates $x_d-y_d-|x'-y'|^2$,
so it must annihilate $F(x)\cdot G(y)$. It follows again
from the submersion property that $F,G$ are independent 
of $x_d,y_d$.

By comparing terms we see that $F(x)\cdot G(y)\equiv \lambda x'\cdot y'$
plus an affine function of $x',y'$.
Therefore $F,G$ take the form
$F(x) = Ax'+u$ and $G(y)=\lambda By'+v$, where $B$ is the transpose of $A^{-1}$
and $u,v$ are vectors in $\reals^{d-1}$.

Now $f(x)-g(y)$ equals $\lambda x_d-\lambda y_d$ plus a quadratic polynomial in $x',y'$.
This forces $f$ to be a quadratic polynomial in $(x',x_d)$ in which the
coefficient of $x_dx_j$ vanishes for all $1\le j\le d$. Likewise for $g$.
The other parts of the description of $\phi,\psi$ now follow from the equation.

\medskip
In order to prove that $(\phi,\psi)\to\phi$ is a bijection, 
which was conclusion (i), it suffices to
prove that if $\phi$ is the identity, then so must be $\psi$.
We already know that $\psi$ must take the form $\psi(y',y_d) = (\lambda y'+v,g(y))$.
Thus 
\[x_d-g(y)-|x'-y'-v|^2\equiv \lambda\big(x_d-y_d-|x'-y'|^2 \big).\]
Equating coefficients of $x_d$ forces $\lambda=1$.
Therefore
$y_d\equiv g(y)+2v\cdot(x'-y')+|v|^2$.
Since the left-hand side is independent of $x'$, $v$ must vanish, 
leaving $g(y)\equiv y_d$.
\end{proof}

Some further information concerning 
$\scriptg_d$ may be found in \S\ref{onsymmetry} below.

For each $\phi\in {\scriptg}_d$, the Jacobian determinant $J_\phi:\reals^d\to (0,\infty)$
of $\phi$ is a constant function.
Associated to each element $\phi\in {\scriptg}_d$ is the 
mapping $\phi^*:L^{(d+1)/d}(\reals^d)\to L^{(d+1)/d}(\reals^d)$
defined by $\phi^*f(x) = f(\phi(x))J^{d/(d+1)}$.
%Likewise set
%$\psi^*g(x) = f(\psi(x))J^{1/(d+1)}$, where $J$ is the Jacobian determinant
%of $\psi$. 
Then 
\begin{equation}
\begin{split}
\norm{\phi^*f}_{(d+1)/d}=\norm{f}_{(d+1)/d} 
%\text{ for all $f$,} 
\\
\langle \psi^*g,\tpar(\phi^*f)\rangle = \langle g,\tpar(f)\rangle
\end{split}
\end{equation}
for all $f,g\in L^{(d+1)/d}$.

\begin{comment}
\begin{lemma}
For any dimension $d$ there exists $N$ such that
every element of $\scriptg_d$ can be expressed
as a composition of at most $N$ of the generators
$\phi,\tau\circ\psi\circ\tau$ listed above.
\end{lemma}
{\em Most likely this lemma isn't needed at all, and should be deleted.}
\end{comment}

\subsection{Paraballs}

\begin{definition} \label{defn:paraballs}
Let $r_1,\cdots,r_{d-1},\rho>0$,
let $\earrow=\{e_1,\cdots,e_{d-1}\}$ 
be any orthonormal basis for $\reals^{d-1}$,
let $\rarrow=(r_1,\cdots,r_{d-1})\in (0,\infty)^{d-1}$,
and let $z=(\bar x,\bar x_\star)\in\scripti$,
that is, $z\in\reals^{d}\times\reals^d$
satisfies $(\bar x_\star)_d-\bar x_d = |\bar x'_\star-\bar x'|^2$.
The {\em paraball}
$B=B(z,\earrow,\rarrow,\rho)$ associated to these data
is the set of all  $x\in\reals^d$ satisfying
\begin{gather}
\label{firstfirstcoords}
\sum_{j=1}^{d-1}
r_j^{-2}|\langle x'-\bar x',e_j\rangle|^2< 1,
\\
\label{firstlastcoord}
\big| x_d-(\bar x_\star)_d -|x'-\bar x_\star'|^2\big| <\rho.
\end{gather}
Here $\bar x=(\bar x',\bar x_d)\in\reals^d$
and $\bar x^\star=(\bar x'_\star,(\bar x^\star)_d)\in\reals^d$.

For any $\lambda\ge 1$, the expanded paraball
$\lambda B(z,\earrow,\rarrow,\rho)$
is defined to be the set of all $x\in\reals^d$ satisfying
\begin{gather}
\label{expandedfirstcoords}
\sum_{j=1}^{d-1}
r_j^{-2}|\langle x'-\bar x',e_j\rangle|^2<\lambda^2, 
\\
\label{expandedlastcoord}
\big| x_d-(\bar x_\star)_d -|x'-\bar x_\star'|^2\big| <\lambda \rho.
\end{gather}
% This definition IS used, at least once.

The dual paraball
$B_\star=B_\star(z,\earrow,\rarrow,\rho)$ associated to these data
is the set of all $x^\star=(x'_\star,(x_\star)_d)\in\reals^d$ satisfying
\begin{gather}
\label{dualfirstcoords}
\sum_{j=1}^{d-1} 
(r^{\star}_j)^{-2}
|\langle x_\star'-\bar x_\star',e_j\rangle|^2< 1
\\
\label{duallastcoord}
\big| (x_\star)_d - \bar x_d + |x_\star'-\bar x'|^2 \big| <\rho
\end{gather}
where $r_jr^\star_j=\rho$ for every $j\in\{1,2,\cdots,d-1\}$.

$\scriptb(z,\earrow,\rarrow,\rho)$ denotes the ordered
pair $\scriptb=(B(z,\earrow,\rarrow,\rho),B_\star(z,\earrow,\rarrow,\rho))$.
\end{definition}

\begin{comment}
\begin{remark} \label{botched1}
I initially botched this definition slightly,
by taking a maximum over $j$ instead of summing the squares.
That version
is perfectly reasonable,
but is awkward when dealing with the action of the symmetry group
$\scriptg_d$, defined below. 
I have now fixed this in the definition, but not in the text which follows.
Some rewriting is still needed to fix this consistently throughout. 
% ?? must fix!
\end{remark}
\end{comment}

$B$ uniquely determines $B_\star$, and vice versa.
The point $z=(\bar x,\bar x_\star)$ is an element of $\reals^{d}\times\reals^d$,
not of $\reals^d$. The data $(\bar x,\earrow,\rarrow,\rho)$ do not
suffice to completely determine $B$;
the point $\bar x$ can be regarded as a ``center''
of $B$, but the geometry of $B$ depends also on $\bar x_\star$.

The mapping from data $(z,\earrow,\rarrow,\rho)$ to paraballs
is not a one-to-one correspondence; $O(d-1)$ acts naturally
on the set of all orthonormal bases $\earrow$, and 
if $\earrow,\tilde\earrow$ belong to the
same orbit under this action then $B(z,\earrow,\rarrow,\rho)
=B(z,\tilde\earrow,\rarrow,\rho)$. 

\medskip
Given $\phi\in\scriptg_d$ and a paraball $B$,
$\{x: \phi(x)\in B\}$ is a paraball. 
For $\phi$ can be expressed in the form
$\phi(x',x_d) = v + (L(x'),\lambda x_d+Q(x'))$
where $L\in Gl(d-1)$, $0\ne\lambda\in\reals$,
$v\in\reals^d$, and $Q:\reals^{d-1}\to\reals^1$. 
Then $L^*L$ is a positive definite symmetric matrix, 
hence admits a factorization $L^*L=O^{-1}DO$ with
$O\in O(d-1)$ and $D$ a diagonal matrix with positive diagonal entries.
The condition $|L(x')|^2\le 1$
is equivalent to 
$|\langle D^{1/2}O(x'),D^{1/2}O(x')\rangle|\le 1$,
which can be expressed in terms of $\earrow,\rarrow$
where $\earrow$ is the image of the standard basis
for $\reals^{d-1}$ under $O$,
and $(r_1,\cdots,r_{d-1})$ are the diagonal entries
of $D^{1/2}$.
Thus \eqref{firstfirstcoords} transforms under $\phi$ to another
inequality of the same form. 
This is the main step in establishing
the first conclusion of the next lemma.

\begin{lemma}
There are natural actions of $\scriptg_d$ on the
set of all paraballs, and
of $\scriptg_{d,d}$ on the set of all dual pairs of paraballs.
These actions are transitive. 
\end{lemma}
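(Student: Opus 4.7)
The plan is to define the actions by pushforward, verify they preserve the paraball and dual-pair structures, and then establish transitivity by reducing every paraball to a canonical one using the four generators listed above. For $\phi\in\scriptg_d$ I declare $\phi\cdot B := \phi(B)$; for $(\phi,\psi)\in\scriptg_{d,d}$ I declare $(\phi,\psi)\cdot(B,B_\star) := (\phi(B),\psi(B_\star))$. That $\phi(B)$ is again a paraball will follow from a two-part check. The paragraph immediately preceding the lemma already shows that the affine action of $\phi$ on the first $d-1$ coordinates sends the ellipsoid condition \eqref{firstfirstcoords} to another ellipsoid, supplying new data $(\tilde\earrow,\tilde\rarrow)$ and new center $\phi(\bar x)'$. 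The slab condition \eqref{firstlastcoord} reads $|\Theta(x,\bar x_\star)|<\rho$; invoking the defining identity $\Theta(\phi(x),\psi(y))=\lambda\Theta(x,y)$ with $y=\bar x_\star$, its pushforward becomes $|\Theta(\phi(x),\psi(\bar x_\star))| < |\lambda|\rho$, producing new dual center $\psi(\bar x_\star)$ and new width $\tilde\rho := |\lambda|\rho$. Because $\phi\times\psi$ preserves $\scripti$, the new data $\tilde z := (\phi(\bar x),\psi(\bar x_\star))$ still lies in $\scripti$.

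Next I verify that $\psi(B_\star)$ really is the dual of $\phi(B)$. Denoting the linear parts of $\phi$ and $\psi$ on the first $d-1$ coordinates by $L$ and $M$, the proof of the preceding lemma yields $M = \lambda L^\dagger$. Hence if $L$ transforms the principal semi-axes $(\earrow,\rarrow)$ of $B$ into $(\tilde\earrow,\tilde\rarrow)$, then $M$ transforms the reciprocal semi-axes $(\earrow,\rarrow^\star)$, where $r_j^\star := \rho/r_j$, into axes $(\tilde\earrow,\tilde\rarrow^\star)$ whose radii satisfy $\tilde r_j\,\tilde r_j^\star = |\lambda|\rho = \tilde\rho$, which is exactly the dual relation.

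For transitivity I take as canonical the paraball $B_0 := \{x\in\reals^d : |x'|<1 \text{ and } |x_d-|x'|^2|<1\}$, corresponding to $z=(0,0)$, the standard basis, all radii equal to $1$, and $\rho=1$. Given $B(z,\earrow,\rarrow,\rho)$ with $z=(\bar x,\bar x_\star)$, I normalize to $B_0$ in four stages. First, translation by $-\bar x_\star$ sends $\bar x_\star$ to the origin; using $\Theta(\bar x,\bar x_\star)=0$ this places the new $\bar x$ at $(\xi,|\xi|^2)$ with $\xi := \bar x'-\bar x'_\star$. Second, the paraboloid-shift generator with parameter $u=-\xi$ sends $(\xi,|\xi|^2)$ to $0$ while fixing $0$ under the associated $\psi$. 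Third, the parabolic dilation with $r=\rho^{-1/2}$ normalizes the slab width to $1$. Fourth, an invertible linear $L$ carrying the unit ball to the remaining ellipsoid produces a generator whose last-coordinate correction $|Lx'|^2-|x'|^2$ is tuned so that the slab condition survives unchanged while the ellipsoid becomes the unit ball. Composing these four elements of $\scriptg_d$ maps $B$ to $B_0$, which proves transitivity on paraballs; transitivity on dual pairs follows at once, since each paraball determines its dual and the action is well-defined. The chief obstacle will be the duality check: the reciprocal scaling of $\rarrow$ and $\rarrow^\star$ under $(\phi,\psi)$ must match the new width $|\lambda|\rho$, and this hinges on the identity $M=\lambda L^\dagger$ from the preceding lemma, after which the rest is direct computation with the explicit generators.
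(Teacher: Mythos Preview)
Your argument is correct and follows the same route the paper sketches: the paper records only the ellipsoid-to-ellipsoid step in the paragraph preceding the lemma and then declares ``the simple remainder of the verification is left to the reader,'' whereas you have actually carried out that remainder, including the slab transformation via $\Theta(\phi(x),\psi(y))=\lambda\Theta(x,y)$, the duality check using $M=\lambda L^\dagger$, and the explicit four-step reduction to the unit paraball. One small slip: in your fourth normalization step you want the generator whose linear part carries the remaining ellipsoid \emph{to} the unit ball, so you should use $L^{-1}$ rather than $L$ (where $L$ maps the unit ball to the ellipsoid); with that adjustment the composition indeed sends $B$ to $B_0$.
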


The simple remainder of the verification is left to the reader.

\begin{comment}
Then by definition, 
$(\phi,\psi)$ acts on ordered pairs of dual paraballs
by acting on the data which specify them;
the latter action on
$((\bar x,\bar x_\star),\earrow,\rarrow,\rho)$
is specified by
\begin{equation}
\text{
$(x,\bar x_\star)\mapsto (\phi(\bar x),\psi(\bar x_\star))$, 
\ 
$\earrow\mapsto O^{-1}\earrow$,
\ 
$\rho\mapsto\lambda\rho$,
\ 
and
\ 
$\rarrow\mapsto D\rarrow$.
}
\end{equation}
% ??? complete

We will write 
$(\phi,\psi)(\scriptb) = (\phi(B),\psi(B^\star))$.
There is no ambiguity here,
since $\phi$ determines $\psi$ and $B$ determines $B^\star$.
%?? Is this notation used anywhere?
\end{comment}

\subsection{Distance between paraballs}
It will be useful to quantify the notion that two paraballs are far apart. 
An approximate measure $\varrho(B^\sharp,B^\flat)$ of
the discrepancy between two paraballs is defined as follows.
%\footnote{This
%can perhaps be simplified. Use
%an invariant metric on $\scriptg_d$ and the relation between
%paraballs and elements of $\scriptg_d$. }

To any paraball 
$B=B(\bar x, \earrow,\rarrow,\rho)$ is associated 
the balanced convex subset $\scriptc\subset \reals^{d-1}$ defined by
\[
\scriptc=\Big\{
v\in\reals^{d-1}: 
\sum_{j=1}^{d-1}
r_j^{-2}|\langle v,e_j\rangle|^2 <1\Big\}\ .
\]
Much of the geometry of $B$ is encoded by $\scriptc$;
$B$ is the set of all $(x',x_d)$ such that $x'-\bar x'\in \scriptc$
and $|x_d-h(x')|<\rho$, for a certain quadratic polynomial $h$ 
specified by $\bar x_\star$. 

In the following definition, $\scriptc^\sharp,\scriptc^\flat$
denote the convex sets associated to $B^\sharp,B^\flat$,
respectively.

\begin{definition} \label{defn:varrho}
Let $B^\sharp=B(z^\sharp,\earrow^\sharp,\rarrow^\sharp,\rho^\sharp)$ and
$B^\flat=B(z^\flat, \earrow^\flat,\rarrow^\flat,\rho^\flat)$ 
be two paraballs,
where $z^\sharp = (\bar x^\sharp,\bar x^\sharp_\star)$,
$z^\flat= (\bar x^\flat,\bar x^\flat_\star)$ are two points in $\scripti$.
Define 
\begin{align*}
\varrho(B^\sharp,B^\flat) 
&= 
\frac{\max(\rho^\sharp,\rho^\flat)}{\min(\rho^\sharp,\rho^\flat)}
\ +\ 
\sup_{v\in\scriptc^\sharp} 
\sum_{j=1}^{d-1} \frac{|\langle v,e^\flat_j\rangle|^2} {(r_j^\flat)^{2}}  
\ +\ 
\sup_{v\in\scriptc^\flat} 
\sum_{j=1}^{d-1} \frac{|\langle v,e^\sharp_j\rangle|^2} {(r_j^\sharp)^{2}}
\\
&\ +\ 
\sum_{j=1}^{d-1} \frac{|\langle \bar x^{\sharp}{}'-\bar x^{\flat}{}',e^\sharp_j\rangle|^2} 
{(r_j^\sharp)^{2}}
\ +\ 
\sum_{j=1}^{d-1} \frac{|\langle \bar x^{\sharp}{}'-\bar x^{\flat}{}',e^\flat_j\rangle|^2} 
{(r_j^\flat)^{2}}  
\\
&\ +\ 
\sum_{j=1}^{d-1} 
\frac{|\langle \bar x_\star^{\sharp}{}'
-\bar x_\star^{\flat}{}',e^\sharp_j\rangle|^2} {(\rho^\sharp/r_j^\sharp )^{2}}
\ +\ 
\sum_{j=1}^{d-1} 
\frac{|\langle \bar x_\star^{\sharp}{}'
-\bar x_\star^{\flat}{}',e^\flat_j\rangle|^2} {(\rho^\flat/r_j^\flat)^2} 
\\
&\ +\ 
\frac{|\bar x_d^\flat-(\bar x_\star^\sharp)_d 
- \big|\bar x^\flat{}'-\bar x_\star^\sharp{}'|\big|}{\rho^\sharp}
\ +\ 
\frac{|\bar x_d^\sharp-(\bar x_\star^\flat)_d 
- \big|\bar x^\sharp{}'-\bar x_\star^\flat{}'|\big|}{\rho^\flat} \ .
\end{align*}
In particular, $\varrho(B^\sharp,B^\flat)\ge 1$  and
$\varrho(B^\sharp,B\flat) = \varrho(B^\flat,B^\sharp)$ for any $B^\sharp,B^\flat$, and
$\varrho(B,B)=1$ for any $B$. 
\end{definition}

$\varrho$ is one of many ways in which the difference between two paraballs
can be quanfified, and other variants would serve our purpose equally well.
Of primary interest here is the situation in which two paraballs differ
markedly, rather than when they nearly coincide.
$\varrho$ is not actually a metric; for instance,
$\varrho(B^\sharp,B^\flat)\ge 1$ for any $B^\sharp,B^\flat$.

The following two invariance properties are direct consequences
of Definitions~\ref{defn:paraballs} and \ref{defn:varrho}.
%We will prove Lemma~\ref{lemma:ifparaballsintersect} in detail,
% ?? I plan to give one proof, then leave rest to reader. 
%% Proofs are omitted.
% ?? Perhaps I should at least sketch them.

\begin{lemma}
\label{lemma:dualballsclose}
%For any $\phi\in\scriptg_d$ and any paraball $B$,
%$\phi(B)$ is a paraball.
%$\scriptg_d$ acts transitively on the set of
%paraballs, that is,
%for any two paraballs $B,B'$ there exists
%$\phi\in\scriptg_d$ such that $\phi(B)=B'$.
For any $\phi\in\scriptg_d$
and any two paraballs $B^\sharp,B^\flat$,
\begin{equation}
\varrho\big(\phi(B^\sharp),\phi(B^\flat)\big)
= \varrho (B^\sharp,B^\flat).
\end{equation}

For any two dual pairs of paraballs
$(B^\sharp,B_\star^\sharp)$
and 
$(B^\flat,B_\star^\flat)$,
\begin{equation}
\varrho(B^\sharp,B^\flat) 
=\varrho(B_\star^\sharp,B_\star^\flat).
\end{equation}
\end{lemma}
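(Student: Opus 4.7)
\medskip

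\textbf{Proposal.} Both invariance assertions reduce to checking what happens to each of the nine summands in the definition of $\varrho(B^\sharp,B^\flat)$. The plan is to organize the terms into three groups according to the geometric information they encode---the $\earrow,\rarrow,\rho$ data (ratio of $\rho$'s, plus the two ``$\sup$'' terms), the four ``translation'' terms measuring how far apart $\bar x'$ and $\bar x_\star'$ are, and the two ``incidence-deviation'' terms measuring how close the crossed pairs $(\bar x^\flat,\bar x_\star^\sharp)$ and $(\bar x^\sharp,\bar x_\star^\flat)$ come to lying on $\scripti$---and treat each group separately.

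For the $\scriptg_d$-invariance, I would use the preceding lemma describing the transitive action of $\scriptg_d$ on paraballs, and check invariance one generator at a time, using the four families displayed after Definition~\ref{defn:symmetries}. Translations and parabolic shifts $u\in\reals^{d-1}$ preserve $\earrow,\rarrow,\rho$ and shift $\bar x,\bar x_\star$ compatibly, so every difference of centers appearing in $\varrho$ is preserved outright. The anisotropic dilation $(x',x_d)\mapsto(rx',r^2x_d)$ replaces $r_j\mapsto rr_j$, $\rho\mapsto r^2\rho$, and scales the various center-differences by matching powers of $r$; each term of $\varrho$ is a homogeneous ratio of degree zero under this scaling. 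The substantive case is a linear map $L\in GL(d-1)$: writing $L^*L=O^{-1}DO$ with $O\in O(d-1)$ and $D$ diagonal positive, the lemma identifies the action as $\earrow\mapsto O^{-1}\earrow$ and $\rarrow\mapsto D^{1/2}\rarrow$, while the first-coordinate differences $\bar x^{\sharp\prime}-\bar x^{\flat\prime}$ (and analogous $v\in\scriptc$) are replaced by their $L$-images; a direct computation then shows each quadratic form $\sum|\langle \cdot,e_j\rangle|^2/r_j^2$ is carried to the corresponding quadratic form after transformation, so each term is preserved. The two incidence-deviation terms are handled uniformly by the defining identity $\Theta(\phi(x),\psi(y))=\lambda\,\Theta(x,y)$ combined with $\rho\mapsto|\lambda|\rho$.

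For the duality invariance, the plan is to match the nine terms of $\varrho(B_\star^\sharp,B_\star^\flat)$ with a permutation of those of $\varrho(B^\sharp,B^\flat)$ under the involution $(\bar x,\bar x_\star,r_j,\rho)\mapsto(\bar x_\star,\bar x,\rho/r_j,\rho)$. The first term is trivially invariant. The ``$\sup$'' terms (2 and 3) are handled by the basic polar-duality identity for norms $p,q$ on $\reals^{d-1}$,
\[
\sup_{p(v)\le 1}q(v)=\sup_{q^*(w)\le 1}p^*(w),
\]
applied to the Euclidean norms defined by $(\earrow^\sharp,\rarrow^\sharp)$ and $(\earrow^\flat,\rarrow^\flat)$; the dual norm corresponds to inverse radii $1/r_j$, and rescaling by $\rho$ produces precisely the set $\scriptc_\star$. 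The four center-difference terms (4--7) pair off cleanly: term 4 of $\varrho(B_\star^\sharp,B_\star^\flat)$ equals term 6 of $\varrho(B^\sharp,B^\flat)$, term 5 equals term 7, and vice versa, because the swap $\bar x\leftrightarrow\bar x_\star$ combined with $r_j\leftrightarrow\rho/r_j$ interchanges these pairs. The last two terms, measuring the paraboloidal incidence defect of the crossed centers, should match once one accounts for the fact that the dual paraball's reference paraboloid opens in the opposite direction, i.e.\ the condition \eqref{duallastcoord} carries a $+|x_\star'-\bar x'|^2$ rather than the primal $-|x'-\bar x_\star'|^2$; a short algebraic check then shows that the resulting signed quantities agree up to an overall sign, with matching denominators.

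The main obstacle I expect is the bookkeeping in the duality proof, specifically matching terms 2 and 3 (the $\sup$ terms) and accounting correctly for the sign flip in the reference paraboloid when evaluating $\varrho$ on dual paraballs. One has to be careful that the polar-duality identity is applied with the correct normalization so that the factor $\rho$ implicit in $r_j^\star=\rho/r_j$ appears precisely where required; an intermediate step may be to reduce to a ``normal form'' where one paraball is the unit paraball at the origin, using the $\scriptg_d$-invariance already established, which simplifies the calculation considerably.
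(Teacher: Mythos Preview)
The paper offers no proof of this lemma; it simply asserts that both identities are ``direct consequences of Definitions~\ref{defn:paraballs} and \ref{defn:varrho}.'' Your plan---verify term by term, using the generator list for the $\scriptg_d$--invariance and the involution $(\bar x,\bar x_\star,r_j,\rho)\mapsto(\bar x_\star,\bar x,\rho/r_j,\rho)$ for the duality---is exactly the computation one must do, and for the first identity it works cleanly: each of the nine summands is genuinely preserved by every generator, with the incidence--deviation terms handled by $\Theta(\phi(x),\psi(y))=\lambda\Theta(x,y)$ together with $\rho\mapsto|\lambda|\rho$, and the center--difference terms by the fact that $\bar x'\mapsto L\bar x'$ while $\bar x_\star'\mapsto L^\dagger\bar x_\star'$, so the primal and dual quadratic forms are carried to themselves.

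Your instinct that terms 2 and 3 are the sticking point in the duality statement is correct, and in fact the obstacle is not merely bookkeeping. A short computation with the polar--duality identity you cite gives
\[
\Big(\text{term 2 of }\varrho(B_\star^\sharp,B_\star^\flat)\Big)
=\Big(\frac{\rho^\sharp}{\rho^\flat}\Big)^2\cdot\Big(\text{term 3 of }\varrho(B^\sharp,B^\flat)\Big),
\]
and symmetrically for term 3, so the $\rho$--factors do \emph{not} cancel. A concrete check in $d=2$ with both centers at the origin, $r^\sharp=r^\flat=1$, $\rho^\sharp=1$, $\rho^\flat=4$ gives $\varrho(B^\sharp,B^\flat)=6$ but $\varrho(B_\star^\sharp,B_\star^\flat)=4+\tfrac1{16}+16$. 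Thus the literal equality asserted in the second display of the lemma fails; what your argument actually yields is two--sided comparability with constants controlled by term~1, i.e.\ by $\max(\rho^\sharp,\rho^\flat)/\min(\rho^\sharp,\rho^\flat)$, which is itself a summand of $\varrho$. This weaker conclusion is all that is ever used in the paper (the only invocation is in the proof of Lemma~\ref{lemma:ballsfarapart}, where one needs $\varrho(B^\alpha,B^\beta)\le C\eta^{-C}$ to follow from the corresponding bound on the dual side), so your approach is sound for the paper's purposes---but you should state the duality conclusion as a comparability rather than an equality.
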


The next lemma relates the distance between two paraballs
to the relative size of their intersection.

\begin{lemma} \label{lemma:ifparaballsintersect}
There exists a constant $C<\infty$ 
which depends only on the dimension $d$,
such that for any two paraballs $B^\sharp,B^\flat$,
\begin{equation}
\varrho(B^\sharp,B^\flat)\le 
C\left( \frac{\max(|B^\sharp|,|B^\flat|)}{|B^\sharp\cap B^\flat|}\right)^{C}.
\end{equation}
\end{lemma}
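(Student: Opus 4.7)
The plan is to exploit the transitivity of $\scriptg_d$ on paraballs together with the invariance of $\varrho$ under this action (Lemma~\ref{lemma:dualballsclose}). Every $\phi\in\scriptg_d$ has constant Jacobian, so the ratio $|B^\sharp\cap B^\flat|/\max(|B^\sharp|,|B^\flat|)$ is preserved as well. Applying a suitable element of $\scriptg_d$, I would assume throughout that $B^\sharp$ is the standard unit paraball $B_0=\{(x',x_d):|x'|<1,\,|x_d-|x'|^2|<1\}$, for which $\earrow^\sharp$ is the standard basis of $\reals^{d-1}$, $\rarrow^\sharp=(1,\ldots,1)$, $\rho^\sharp=1$, and $\bar x^\sharp=\bar x_\star^\sharp=0$. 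Setting $\delta:=|B^\sharp\cap B^\flat|/\max(|B^\sharp|,|B^\flat|)$, the goal becomes $\varrho(B^\sharp,B^\flat)\leq C\delta^{-C}$.

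Next I would slice by the projection $\pi(x',x_d)=x'$. Every paraball satisfies $|B|=2\rho|\scriptc|$, and the fiber of $B^\sharp\cap B^\flat$ above any $x'\in\pi(B^\sharp)\cap\pi(B^\flat)$ has length at most $2\min(\rho^\sharp,\rho^\flat)$. This yields simultaneously $\max(\rho^\sharp,\rho^\flat)/\min(\rho^\sharp,\rho^\flat)\leq C\delta^{-1}$, which controls the first term of $\varrho$, and $|\pi(B^\sharp)\cap\pi(B^\flat)|\geq c\delta\max(|\scriptc^\sharp|,|\scriptc^\flat|)$. A standard convex-geometric lemma for ellipsoids whose intersection is a definite fraction of each then forces $r_j^\flat\in[c\delta^C,C\delta^{-C}]$, $|\bar x^{\flat\prime}|\leq C\delta^{-C}$, and that $\scriptc^\sharp$ and $\scriptc^\flat$ each fit into a bounded dilate of the other; this handles the second through fifth terms of $\varrho$. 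Moreover, since $\pi(B^\sharp)\cap\pi(B^\flat)$ is then the intersection of two ellipsoids inside the unit ball whose product of semi-axes is $\geq c\delta$, no axis can be too small and the intersection contains a Euclidean ball of radius $\geq c\delta^C$.

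To control the remaining terms of $\varrho$, which involve $\bar x_\star^\flat$ and the last coordinate, I would subtract the two instances of \eqref{firstlastcoord} satisfied by any $(x',x_d)\in B^\sharp\cap B^\flat$, obtaining
\[
\bigl|(\bar x_\star^\flat)_d+|\bar x_\star^{\flat\prime}|^2-2\langle x',\bar x_\star^{\flat\prime}\rangle\bigr|\leq 1+\rho^\flat\leq C\delta^{-1}.
\]
Since this affine function of $x'$ is bounded by $C\delta^{-1}$ on a ball of radius $\geq c\delta^C$, its linear and constant parts separately satisfy $|\bar x_\star^{\flat\prime}|\leq C\delta^{-C}$ and $|(\bar x_\star^\flat)_d|\leq C\delta^{-C}$. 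Combined with the identity $\bar x_d^\flat=(\bar x_\star^\flat)_d-|\bar x^{\flat\prime}-\bar x_\star^{\flat\prime}|^2$ forced by $(\bar x^\flat,\bar x_\star^\flat)\in\scripti$ and the axis bounds already in hand, each of the remaining terms in Definition~\ref{defn:varrho} is controlled by a power of $\delta^{-1}$.

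The step I expect to be the main obstacle is the quantitative ellipsoid comparison in the second paragraph: to recover the individual weighted quantities that appear in $\varrho$, rather than just the overall volume ratio, one must track all of the axis lengths, orientations, and centers of $\scriptc^\flat$ relative to $\scriptc^\sharp$ with genuinely polynomial dependence on $\delta$. Once that comparison is established, the bookkeeping for the nine terms of Definition~\ref{defn:varrho} is routine.
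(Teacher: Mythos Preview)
Your approach is correct in outline and takes a somewhat different route from the paper. The paper does \emph{not} normalize $B^\sharp$ via $\scriptg_d$ (indeed, a paragraph doing so is commented out in the source); instead it argues the contrapositive, running through the nine summands of $\varrho$ one at a time and showing that if any one of them is $\gtrsim\varrho(B^\sharp,B^\flat)$ then $|B^\sharp\cap B^\flat|$ is small relative to $\max(|B^\sharp|,|B^\flat|)$. For the last four terms the paper in effect performs the same subtraction you do (its ``quadratic polynomial'' $P$ is actually affine, since the $|z|^2$ terms cancel), invoking the sublevel-set estimate for polynomials on convex sets. Your normalization makes the bookkeeping more transparent and reduces the polynomial sublevel-set lemma to its affine special case; the paper's case analysis avoids any appeal to transitivity or John--type ellipsoid facts but pays with a longer enumeration. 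Both routes buy the same polynomial dependence on $\delta$.

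One point needs tightening. Your subtraction produces a bound on the affine function $L(x')$ only for $x'\in\pi(B^\sharp\cap B^\flat)$, which is strictly smaller than $\pi(B^\sharp)\cap\pi(B^\flat)$; the Euclidean ball you extract lies in the latter set, not the former. The fix is immediate: $\pi(B^\sharp\cap B^\flat)$ is itself convex (it equals $(\pi(B^\sharp)\cap\pi(B^\flat))\cap\{|L|<\rho^\sharp+\rho^\flat\}$), has measure $\ge c\delta^C$ by your slicing bound, and lies in the unit ball. Since the sublevel set $\{|L|\le M\}$ is a slab of width $O(M/|\nabla L|)$, the measure lower bound forces $|\bar x_\star^{\flat\prime}|=\tfrac12|\nabla L|\le C\delta^{-C}$ directly, without ever needing an inscribed round ball. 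With this correction the remaining estimates go through exactly as you say.
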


There is of course no converse inequality; the right-hand side
becomes infinite whenever $B^\sharp\cap B^\flat=\emptyset$. 

The following elementary fact will be used in the proof:
For any $d\ge 1$ there exist constants $C,c\in(0,\infty)$
such that for any convex set $\scriptc\subset\reals^d$
of positive Lebesgue measure, and any quadratic polynomial
$Q:\reals^d\to\reals$ which does not vanish identically,
for any $\eps>0$
\[
|\{y\in\scriptc: |Q(y)|<\eps \sup_{\scriptc}|Q|\}|
\le C\eps^c|\scriptc|.
\]

\begin{proof}[Proof of Lemma~\ref{lemma:ifparaballsintersect}]
We need to show that a lower bound on $\varrho(B^\sharp,B^\flat)$
implies an upper bound on 
$|B^\sharp\cap B^\flat|/\max(|B^\sharp|,|B^\flat|)$ of inverse
power law type.  We may assume that $\varrho(B^\sharp,B^\flat)$
is large, since otherwise the inequality holds automatically
for large enough $C$.

\begin{comment}
By exploiting the transitivity of the action of
$\scriptg_d$ on the set of paraballs
and the invariance of $\varrho$ under this action,
we may assume without loss of generality that $B^\sharp$ is the
unit paraball $B^0$, that is, $\earrow^\sharp$ is the standard
basis for $\reals^{d-1}$, $\rho=1=r_j$ for all $1\le j\le d-1$,
and $\bar x^\sharp=0$.
\end{comment}

Recall
the description of $B^\flat$ as the set of all $(x',x_d)$
such that $x'\in \scriptc^\flat$,
and $|x_d-h(x')|<\rho$, for a certain function $h$ 
which depends on $B^\flat$.
This implies that
\begin{align*}
|B^\sharp\cap B^\flat|
&\le
\min(\rho^\sharp,\rho^\flat)
|\scriptc^\sharp\cap\scriptc^\flat|.
\\
&\le \frac{ \min(\rho^\sharp,\rho^\flat)} {\max(\rho^\sharp,\rho^\flat)}
\max(|B^\sharp|,|B^\flat|)
%\\
%&\le \varrho(B^\sharp,B^\flat)
%\max(|B^\sharp|,|B^\flat|).
\end{align*}
%\cdot \min(|\scriptc^\sharp|,|\scriptc^\flat|)$
If $ \frac {\max(\rho^\sharp,\rho^\flat)} {\min(\rho^\sharp,\rho^\flat)} 
\gtrsim\varrho(B^\sharp,B^\flat)$, this concludes the proof.
In the same way,
\begin{align*}
|B^\sharp\cap B^\flat|
&\le
\frac{|(\bar x^\sharp+\scriptc^\sharp) \cap (\bar x^\flat+\scriptc^\flat)|)}
{\max(|\scriptc^\sharp|,|\scriptc^\flat|)}
\max(|B^\sharp|,|B^\flat|).
\\
&\le
\frac{\min(|\scriptc^\sharp|,|\scriptc^\flat|)}{\max(|\scriptc^\sharp|,|\scriptc^\flat|)}
\max(|B^\sharp|,|B^\flat|).
\end{align*}
The quantity
$
\frac{\max(|\scriptc^\sharp|,|\scriptc^\flat|)}{\min(|\scriptc^\sharp|,|\scriptc^\flat|)}
$
is comparable to the sum of the second and third terms in the definition
of $\varrho(B^\sharp,B^\flat)$.
The desired inequality follows, if either of these terms
is $\gtrsim\varrho(B^\sharp,B^\flat)$.
Moreover
\[
|(\bar x^\sharp + \scriptc^\sharp)\cap (\bar x^\flat+\scriptc^\flat)|
\le C
\Big(
\sum_{j=1}^{d-1} |\langle \bar x^{\sharp}{}'-\bar x^{\flat}{}',e^\sharp_j\rangle|/r_j^\sharp
\ +\ 
\sum_{j=1}^{d-1} |\langle \bar x^{\sharp}{}'-\bar x^{\flat}{}',e^\flat_j\rangle|/r_j^\flat  
\Big)
\max(|\scriptc^\sharp|,|\scriptc^\flat|),
\]
so the conclusion holds if either the fourth or fifth term in the
definition of $\varrho$ is largest.

Consider next the final two terms in the definition.
Define 
$Q^\sharp(y) = y_d-(\bar x^\sharp_\star)_d - |y'-\bar x_\star^\sharp{}'|^2$ 
and
$Q^\flat(y) = y_d-(\bar x^\flat_\star)_d - |y'-\bar x_\star^\flat{}'|^2$.
Consider the case where the eighth term is large, in the sense that 
$|Q^\sharp(\bar x^\flat)|
\ge \tfrac1{2}\varrho(B^\sharp,B^\flat)\rho^\sharp$, and 
the first seven terms in the definition of $\varrho$
are all $\le c_0\varrho(B^\sharp,B^\flat)$ for a suitably small constant
$c_0$, and moreover, 
\[
\max(\rho^\sharp,\rho^\flat)/\min(\rho^\sharp,\rho^\flat)
\le \varrho(B^\sharp,B^\flat)^{1/2}.
\]
Then $\bar x^\flat\notin B^\sharp$. We aim to prove that
$|B^\sharp\cap B^\flat|$ is relatively small.
To this end consider the quadratic polynomial $P:\reals^{d-1}\to\reals$ 
defined by
$P(z) = Q^\sharp(z,t(z))$
where $t(z)$ is chosen so that
$Q^\flat(z,t(z))\equiv 0$, that is,
$t(z) = (\bar x^\flat_\star)_d + |z-\bar x_\star^\flat{}'|^2$.
Observe that 
\[\text{ If 
$|P(z)|> \rho^\flat+\rho^\sharp$
then $B^\sharp\cap B^\flat\cap (\{z\}\times\reals)=\emptyset$.}
\]

In the case which we are now analyzing, $P$ satisfies 
\[|P(\bar x^\flat)| 
\ge \tfrac1{2}\varrho(B^\sharp,B^\flat)\rho^\sharp.\]
Define 
\[\eps = 
\frac{3\max(\rho^\sharp,\rho^\flat)}
{\varrho(B^\sharp,B^\flat)\rho^\sharp}
\le 
3 \varrho(B^\sharp,B^\flat)^{-1/2}.
\]
On $\scriptc^\flat$,
\[
|P(z)|\ge \eps
\varrho(B^\sharp,B^\flat)\rho^\sharp =3\max(\rho^\sharp,\rho^\flat)> \rho^\sharp+\rho^\flat,
\]
for all $y$ outside a set of measure $\le C\eps^c|\scriptc^\flat|$.

We conclude that
\begin{align*}
|B^\sharp\cap B^\flat|
&\le C\eps^c\min(\rho^\sharp,\rho^\flat)|\scriptc^\flat|
\\
&\le C'\varrho(B^\sharp,B^\flat)^{-c'} |B^\flat|
\end{align*}
for some $c',C'\in\reals^+$. This is the bound required.

Everything is symmetric in the indices 
$\sharp,\flat$, so
the case where the ninth term is large requires no further discussion.
Likewise it suffices now to treat the case where the sixth term is
large, the seventh term being handled by symmetry. 

Suppose then that
$ |\langle \bar x_\star^{\sharp}{}'
-\bar x_\star^{\flat}{}',e^\sharp_i\rangle|
\ge c\varrho(B^\sharp,B^\flat)\rho^\sharp/r_i$ for some index $i$.
To simplify notation write $u = \bar x_\star^\sharp{}'$
and
$v = \bar x_\star^\flat{}'$.
If $y=(w,t)\in\reals^{d-1}\times\reals$ belongs to
$B^\sharp\cap B^\flat$ then
$t$ satisfies
$|t-(\bar x_\star^\sharp)_d-|w- u|^2|<\rho^\sharp$
and
$|t-(\bar x_\star^\flat)_d-|w- v|^2|<\rho^\flat$.
Subtracting gives
\[
\big| 2\langle w,u-v\rangle -s \big| < 2\max(\rho^\sharp,\rho^\flat)
\]
where $s=2|u|^2-2u\cdot v$. 
The value of $s$ is little consequence. 
With respect to the basis $\earrow^\sharp$,
the $i$-th component of $u-v$ has absolute value
$\ge c\varrho(B^\sharp,B^\flat)\rho^\sharp/r_i$.
$w$ ranges over a translate of $\scriptc^\sharp$;
in particular, $\langle w,\earrow^\sharp_i\rangle$
satisfies the sole constraint
$\langle w,\earrow^\sharp_i\rangle<r_i$. 
This forces
\[
\big|
\{w\in \bar x^\sharp{}'+\scriptc^\sharp:
\big| 2\langle w,u-v\rangle -s \big| < 2\max(\rho^\sharp,\rho^\flat)\}
\big|
\le C\varrho(B^\sharp,B^\flat)^{-1}|\scriptc^\sharp|,
\]
uniformly in all $s\in\reals$.
This implies the required bound, by a repetition of arguments
given above.
\end{proof}

There is an adequate quasi-triangle inequality for $\varrho$.
\begin{lemma} \label{lemma:triangle}
For any three paraballs,
\begin{equation}
\varrho(B^\sharp,B^\flat)\le C\varrho(B^\sharp,B^\natural)^C
+ C\varrho(B^\natural,B^\flat)^C
\end{equation}
where $C<\infty$ depends only on the dimension $d$.
\end{lemma}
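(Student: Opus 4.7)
The plan is to exploit the $\scriptg_d$-invariance of $\varrho$ (Lemma~\ref{lemma:dualballsclose}) together with the transitivity of the $\scriptg_d$-action on paraballs established in the previous lemma. Since both properties are at our disposal, we may choose $\phi\in\scriptg_d$ with $\phi(B^\natural)=B^0$, where $B^0$ is the standard paraball with data $\bar x=\bar x_\star=0$, $\earrow$ the standard basis of $\reals^{d-1}$, $r_j=1$ for all $j$, and $\rho=1$. Writing $A=\varrho(B^\sharp,B^\natural)$ and $B=\varrho(B^\natural,B^\flat)$, it therefore suffices to bound $\varrho(B^\sharp,B^\flat)$ by $C(A^C+B^C)$ under the normalization $B^\natural=B^0$.

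The next step is to convert the inequality $\varrho(B^\sharp,B^0)\le A$ into explicit control on each parameter of $B^\sharp=B(z^\sharp,\earrow^\sharp,\rarrow^\sharp,\rho^\sharp)$ relative to the standard data. Inspection of the nine terms in Definition~\ref{defn:varrho} yields: (i) the first term gives $A^{-1}\le\rho^\sharp\le A$; (ii) the two supremum terms force each $r_j^\sharp$ to lie between $A^{-1}$ and $A$ and each basis vector $e_j^\sharp$ to lie within angular distance $O(A)$ of the span of appropriate standard basis vectors, so that the change-of-basis matrix between $\earrow^\sharp$ and the standard basis and its inverse have entries bounded by a polynomial in $A$; (iii) the fourth, fifth, sixth, and seventh terms yield $|\bar x^\sharp{}'|$ and $|\bar x_\star^\sharp{}'|$ polynomially bounded in $A$; and (iv) the final two terms, combined with the incidence relation $(\bar x_\star^\sharp)_d - \bar x_d^\sharp = |\bar x_\star^\sharp{}'-\bar x^\sharp{}'|^2$, control $\bar x_d^\sharp$ and $(\bar x_\star^\sharp)_d$ up to additive $O(\mathrm{poly}(A))$. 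The identical analysis applied to $\varrho(B^0,B^\flat)\le B$ gives the same polynomial-in-$B$ bounds on every parameter of $B^\flat$.

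With these normalizations in hand, the bound on $\varrho(B^\sharp,B^\flat)$ is obtained by evaluating each of its nine constituent terms directly. Every such term is a ratio whose numerator is an expression of the form $|a^\sharp - a^\flat|$ or $\rho^\sharp/\rho^\flat$ or a quadratic quantity, and whose denominator is one of the $r_j^\sharp,r_j^\flat,\rho^\sharp,\rho^\flat$. Since the numerator admits the elementary triangle-inequality splitting $|a^\sharp-a^\flat|\le|a^\sharp-a^0|+|a^0-a^\flat|$ (with $a^0$ the corresponding parameter of $B^0$, which vanishes or equals $1$), and each piece may then be estimated using the paragraph above, every term is dominated by a polynomial in $A$ and $B$. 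Combining all nine estimates produces the required inequality, with $C$ depending only on $d$.

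The main technical nuisance, rather than obstacle, lies in the terms that mix the two orthonormal bases $\earrow^\sharp$ and $\earrow^\flat$, namely the supremum terms and the $\bar x_\star$-displacement terms. To handle these one expands $\langle v,e_j^\flat\rangle=\sum_k\langle e_j^\flat,e_k^\sharp\rangle\langle v,e_k^\sharp\rangle$ and bounds the transition matrix $(\langle e_j^\flat,e_k^\sharp\rangle)$ by going through the standard basis, using the polynomial-in-$A$ and polynomial-in-$B$ control established in the second paragraph. No genuinely new idea beyond careful bookkeeping is required; the structural content of the lemma is entirely encoded in the transitive action of $\scriptg_d$ and the invariance of $\varrho$.
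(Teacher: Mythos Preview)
Your approach is correct and is essentially the same as the paper's: normalize $B^\natural$ to the standard paraball via the transitive $\scriptg_d$-action, read off polynomial control on all parameters of $B^\sharp$ and $B^\flat$ from the nine terms of $\varrho$, and then bound each term of $\varrho(B^\sharp,B^\flat)$ directly. The paper compresses the second step into a reference to the proof of Lemma~\ref{lemma:ifparaballsintersect}, while you spell it out; the content is the same.

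One small simplification to your final paragraph: since $\earrow^\sharp$ and $\earrow^\flat$ are both orthonormal bases of $\reals^{d-1}$, the transition matrix $(\langle e_j^\flat,e_k^\sharp\rangle)$ is orthogonal, hence has entries bounded by $1$ automatically; there is no need to route through the standard basis, and the ``angular distance'' remark in your step (ii) is not actually used anywhere. The mixed supremum terms are handled simply by $\sum_j|\langle v,e_j^\flat\rangle|^2/(r_j^\flat)^2\le |v|^2/\min_j(r_j^\flat)^2$, which is polynomially bounded once you know $|v|\lesssim A^{1/2}$ on $\scriptc^\sharp$ and $r_j^\flat\gtrsim B^{-1/2}$.
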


By exploiting the transitive action of $\scriptg_d$, we may assume
without loss of generality that $B^\natural = B(z,\earrow,\rarrow,\rho)$
where $z=(0,0)$, $\earrow$ is the standard basis for $\reals^{d-1}$,
$\rarrow = (1,1,\cdots,1)$ and $\rho=1$. 
Reasoning like that in the proof of Lemma~\ref{lemma:ifparaballsintersect}
then contols the parameters specifying $B^\sharp$ in terms
of $\eta^{-1}=\varrho(B^\sharp,B^\natural)$,
and likewise for $B^\flat$. 
It then follows directly from the definitions that $\varrho(B^\sharp,B^\flat)
\le C\eta^{-C}$ where now $\eta$ is the maximum of the two quasidistances.

\begin{comment}
\begin{lemma} \label{lemma:thirdgeometricparaballs}
For any dimension $d\ge 2$ there exists $C<\infty$ with
the following property.
For any $\eta\in (0,1]$ and any two paraballs $B^\sharp,B^\flat$,
if there exists a pair  of paraballs 
$\tilde B^\sharp,\tilde B^\flat$ satisfying
\[\varrho(\tilde B^\sharp,\tilde B^\flat)\le\eta^{-1}\]
and
\begin{align*}
|B^\sharp\cap \tilde B^\sharp|&\ge \eta \max(|B^\sharp|,|\tilde B^\sharp|) 
\\
|B^\flat\cap \tilde B^\flat|&\ge \eta \max(|B^\flat|,|\tilde B^\flat|) 
\end{align*}
then
\[
\varrho(B^\sharp,B^\flat)\le C\eta^{-C}.
\]
\end{lemma}

{\em Proof not yet written.}
\end{comment}

\subsection{Lorentz spaces}

\begin{definition}
Let $f$ be a nonnegative function which is finite almost everywhere.
A rough level set decomposition of $f$
is a representation
$f = \sum_{j=-\infty}^\infty
2^j f_j$ where $f_j$ is supported on a set $E_j$, $1\le |f_j(x)|<2$
for almost every $x$, and the sets $E_j$ are pairwise disjoint and measurable.
\end{definition}
\noindent
Any (complex-valued)
function $f$ which is finite almost everywhere on $\reals^d$
admits such a decomposition, which is unique modulo redefinition on sets
of Lebesgue measure zero. 
As shorthand for such a decomposition we will write
``$f = \sum_j 2^j f_j$, $f_j\leftrightarrow E_j$''.

A quasinorm for the Lorentz space $L^{p,r}(\reals^d)$
is 
\[
\norm{f}_{p,r} = \big(\sum_j (2^j|E_j|^{1/p})^r\big)^{1/r},
\]
with the natural interpretation for $r=\infty$.
For $p>1$ and $r\in [1,\infty]$ there exist equivalent expressions
which satisfy the triangle inequality. 
These spaces are nested: $L^{p,r}\subset L^{p,s}$ if $r\le s$,
and the inclusion is proper if $r<s$.
See \cite{steinweiss} for further information.

\subsection{Analytic preliminaries}

We review here four facts established in \cite{quasiextremal}.
The first of these results asserts that
any quasiextremal function for inequality \eqref{eq:paraboloid}
is closely connected with the characteristic function of
some paraball of comparable $L^{p }$ norm.

\begin{lemma} \cite{quasiextremal} \label{lemma:mainQEpaper}
For any $\eps>0$ there exist $c,C\in\reals^+$ with the following property.
If $f\in L^{p}$ is a nonnegative function with rough level set
decomposition $f = \sum_{j\in \integers} 2^j f_j$, $f_j\leftrightarrow E_j$,
and if $\norm{\tpar f}_q \ge \eps\norm{f}_p$
then there exist an index $j$ and a paraball
$B$ such that
\[
\norm{2^j f_j\cdot \chi_{B}}_p\ge c\eps^C\norm{f}_p
\]
and
\[
|B|\le |E_j|. 
\]
\end{lemma}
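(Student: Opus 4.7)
The plan is to reduce, via two dyadic pigeonholing steps, to a bilinear incidence inequality for a pair of characteristic functions $(\chi_E,\chi_F)$, and then to extract a paraball from this bilinear inequality using the algebraic form of the incidence relation $\Theta$ together with the transitivity of $\scriptg_{d,d}$ on $\scripti$. Throughout I will keep all losses polynomial in $\eps$ by truncating to dyadic ranges of length $O(\log(1/\eps))$ before pigeonholing.

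First I would carry out the reduction. Let $\tpar f=\sum_k 2^k g_k$ with $g_k\leftrightarrow F_k$ be the rough level-set decomposition of $\tpar f$. The hypothesis gives $\sum_k 2^{kq}|F_k|\gtrsim\eps^q\sum_j 2^{jp}|E_j|$. I truncate both decompositions to intervals in $j,k$ that carry essentially all of each side; the discarded tails are controlled by the trivial $L^1\to L^1$ and $L^\infty\to L^\infty$ bounds for $\tpar$. Within the truncated range there are only $O(\log^2(1/\eps))$ pairs $(j,k)$, and a pigeonholing yields a single pair $(j_0,k_0)$, and a single output level set $F:=F_{k_0}$, such that, writing $E:=E_{j_0}$,
\begin{equation*}
\langle \tpar\chi_E,\chi_F\rangle\gtrsim\eps^C\,2^{k_0-j_0}|F|,\qquad
2^{j_0}|E|^{1/p}\gtrsim\eps^C\norm{f}_p,\qquad
2^{k_0}|F|^{1/q}\gtrsim\eps^C\norm{\tpar f}_q.
\end{equation*}
Eliminating the scales $2^{j_0},2^{k_0}$ reduces the lemma to the bilinear statement: whenever $E,F\subset\reals^d$ satisfy $\langle \tpar\chi_E,\chi_F\rangle\gtrsim\delta|E|^{1/p'}|F|^{1/q'}$ with $\delta=\eps^{C}$, there is a paraball $B$ with $|E\cap B|\gtrsim\delta^{C'}|B|$ and $|B|\le|E|$. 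Transferring back at the end gives $\norm{2^{j_0}f_{j_0}\chi_B}_p\ge c\eps^C\norm{f}_p$ with $|B|\le|E_{j_0}|$, as required.

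Next, the geometric extraction. I would proceed by iterative random selection, reading the paraball data $(z,\earrow,\rarrow,\rho)$ directly off the bilinear hypothesis. The hypothesis provides a quasi-generic $\bar x_\star\in F$ for which the paraboloidal slice $S(\bar x_\star):=\{x\in E:\Theta(x,\bar x_\star)=0\}$ has $(d-1)$-dimensional measure $\gtrsim\delta^C|E|\,|F|^{-1/q'}$; within this slice pick a quasi-generic $\bar x\in E$, so that $z:=(\bar x,\bar x_\star)\in\scripti$. Using the transitive action of $\scriptg_{d,d}$ on $\scripti$ I normalize so $z$ is the standard base point, in which coordinates $S(\bar x_\star)$ projects onto a measurable subset $K\subset\reals^{d-1}$ of the hyperplane $\{x_d=|x'|^2\}$. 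Take $\earrow,\rarrow$ from the John ellipsoid of $K$, and take $\rho$ to be the thickness in the $y_d$-direction at which $F$ concentrates around $\bar x_\star$ relative to the incidence density. The decisive algebraic point is that the difference of two paraboloid translates $|x'-y_1'|^2-|x'-y_2'|^2$ is affine in $x'$ (a Heisenberg-type cancellation); iterating the selection, the simultaneous constraints from slices by several well-chosen points of $F$ reduce precisely to the inequalities \eqref{firstfirstcoords}--\eqref{firstlastcoord} defining a paraball $B=B(z,\earrow,\rarrow,\rho)$. Un-normalizing by the inverse of the chosen element of $\scriptg_d$ produces the required paraball, with $|E\cap B|\gtrsim\delta^C|B|$ and $|B|\lesssim\delta^{-C}|E|$; a final cosmetic shrinking of the radii by a polynomial factor in $\delta$ enforces $|B|\le|E|$ while preserving $|E\cap B|\gtrsim\delta^{C''}|B|$.

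The main obstacle is the geometric extraction rather than the pigeonholing. Matching the concentration region to the precise algebraic form of a paraball, with compatible data $(z,\earrow,\rarrow,\rho)$, requires genuine use of the $d-1$ nondegenerate principal curvatures of $\sigma$ to produce $d-1$ transverse directions in which $E$ is constrained, together with the orthogonal $y_d$-direction. The transitivity of $\scriptg_{d,d}$ on $\scripti$ is essential for the normalization, since otherwise one would have to carry out the geometric analysis at each base point separately. Arranging the final size comparison $|B|\le|E_j|$ with only a polynomial loss in $\eps$—rather than the naive $|B|\lesssim\delta^{-C}|E|$—demands one further refinement step, in which the selection is iterated with slightly smaller radii to trade a polynomial factor in $\delta$ for the correct measure inequality.
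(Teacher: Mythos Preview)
The paper does not prove this lemma; immediately after stating it the text says ``This is Theorem~1.5 of \cite{quasiextremal}.'' So there is no in-paper proof to compare against---the result is imported wholesale from the companion paper.

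That said, your sketch has the right architecture: a dyadic pigeonholing on both $f$ and $\tpar f$ to reduce to a bilinear set--set inequality, followed by a geometric extraction of a paraball. This is indeed the strategy of \cite{quasiextremal}, and your invocation of the Heisenberg-type cancellation (that the difference of two translated paraboloids is affine in $x'$) is the correct algebraic reason the extracted region is a paraball rather than some curved object. Two points, however. First, a minor slip: the natural bilinear normalization arising from $L^p\to L^q$ is $\langle \tpar\chi_E,\chi_F\rangle\gtrsim\delta|E|^{1/p}|F|^{1/q'}$, not $|E|^{1/p'}|F|^{1/q'}$; with $p=(d+1)/d$ and $q=d+1$ this is the symmetric $|E|^{d/(d+1)}|F|^{d/(d+1)}$ appearing in Proposition~\ref{prop:dualparaballs}. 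Second, and more substantively, the geometric extraction paragraph is not yet a proof. Phrases like ``iterating the selection, the simultaneous constraints from slices by several well-chosen points of $F$ reduce precisely to the inequalities \eqref{firstfirstcoords}--\eqref{firstlastcoord}'' compress what in \cite{quasiextremal} is a multi-step refinement argument: one must (a) pass from a generic slice to a \emph{convex} region in $\reals^{d-1}$ with controlled eccentricity (the John ellipsoid step requires first showing that a polynomial-in-$\delta$ fraction of $E$ concentrates near a convex set, which is not automatic), (b) show that the thickness parameter $\rho$ can be chosen compatibly with the slice geometry so that the resulting paraball actually captures a $\delta^C$-fraction of $E$, and (c) arrange $|B|\le|E|$ without spoiling the lower bound. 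Your ``final cosmetic shrinking'' handles (c), but (a) and (b) are where the work lies, and your sketch does not indicate how the $d-1$ transverse constraints and the one longitudinal constraint are made to mesh quantitatively.
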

\noindent This is Theorem~1.5 of \cite{quasiextremal}.

It is often useful to work with the bilinear form $\langle g,\tpar f\rangle$.
The following result connects quasiextremal pairs $(f,g)$ with 
dual pairs of paraballs,
in the basic case when $f,g$ are both characteristic functions of sets.
\begin{proposition} \cite{quasiextremal} \label{prop:dualparaballs}
There exist positive finite constants $C,c,\gamma$, depending only on the dimension $d$,
with the following property.
Let $E,E^\star$ be measurable subsets of $\reals^d$ 
satisfying $0<|E|,|E^\star|<\infty$.
Define $\eps>0$ to be
\[
\eps=\frac{\langle \chi_{E^\star},\,\tpar \chi_E\rangle}{|E|^{d/(d+1)}|E^\star|^{d/(d+1)}}.
\]
Then there exists a pair $\scriptb=(B,B^\star)$ of dual paraballs
such that 
\begin{alignat*}{2}
&|B|\le |E|, 
&&|E\cap B|\ge c\eps^\gamma |E|,
\\
&|B^\star|\le |E^\star|,
\qquad
&&|E^\star\cap B^\star|\ge c\eps^\gamma |E^\star|.
\end{alignat*}
\end{proposition}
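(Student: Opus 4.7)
The plan is to extract a paraball from the side of $\chi_E$ via Lemma~\ref{lemma:mainQEpaper}, then separately extract a dual paraball from the side of $\chi_{E^\star}$ by applying the symmetric argument to $\tpar^*$, and finally align the two as a genuine dual pair.

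First, H\"older's inequality applied to the hypothesis gives
\[
\|\tpar\chi_E\|_{d+1}\ge \eps |E|^{d/(d+1)}=\eps\|\chi_E\|_{(d+1)/d},
\]
so $\chi_E$ is a quasi-extremizer with parameter $\eps$. Its rough level set decomposition is the trivial one ($\chi_E = 2^0\cdot\chi_E$ with $E_0=E$), so Lemma~\ref{lemma:mainQEpaper} supplies a paraball $B$ with $|B|\le |E|$ and $|E\cap B|\ge c\eps^{\gamma}|E|$ (the lemma gives $\|\chi_{E\cap B}\|_p\ge c\eps^C\|\chi_E\|_p$, which upon raising to the $p$-th power produces the stated form). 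This $B$ has a canonically determined dual paraball $B^\star$ per Definition~\ref{defn:paraballs}, and it remains to show that $|E^\star\cap B^\star|\ge c\eps^{\gamma}|E^\star|$.

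Second, the same H\"older estimate applied to $\langle \tpar^*\chi_{E^\star},\chi_E\rangle$ gives $\|\tpar^*\chi_{E^\star}\|_{d+1}\ge \eps\|\chi_{E^\star}\|_{(d+1)/d}$. Since the reflection $\tau:x\mapsto -x$ conjugates $\tpar$ into $\tpar^*$ (modulo the obvious volume-preserving normalizations), $\tpar^*$ satisfies an identical inequality and possesses an identical quasi-extremal theory, with the roles of paraball and dual paraball interchanged. Applying this analogue of Lemma~\ref{lemma:mainQEpaper} to $\chi_{E^\star}$ yields a dual paraball $\tilde B^\star$ with $|\tilde B^\star|\le |E^\star|$ and $|E^\star\cap \tilde B^\star|\ge c\eps^{\gamma}|E^\star|$, arising from some primal paraball $\tilde B$ a priori unrelated to $B$.

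The main obstacle is the alignment: showing that $\tilde B$ can be taken equal to $B$, so that $B$ and $B^\star$ form a single dual pair capturing substantial mass on both sides. Here I would use the bilinear pairing together with Lemma~\ref{lemma:ifparaballsintersect}. Concentration on both sides gives
\[
\langle\chi_{E^\star\cap\tilde B^\star},\tpar\chi_{E\cap B}\rangle
\ge c\eps^{C}|E|^{d/(d+1)}|E^\star|^{d/(d+1)},
\]
obtained by truncating $\chi_E$ to $E\cap B$ and $\chi_{E^\star}$ to $E^\star\cap\tilde B^\star$ and bounding the two ``cross'' terms via H\"older, each of which loses only $\eps^{O(1)}$ since the excised pieces carry a proportion of mass at most $1-c\eps^{\gamma}$. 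On the other hand, for any paraball $P$ the function $\tpar\chi_P$ is geometrically concentrated (in an $L^\infty$ sense on a set of comparable measure) in the dual paraball $P^\star$, essentially by the definition of dual paraballs; integrating against $\chi_{Q^\star}$ for another dual paraball $Q^\star$ thus forces substantial overlap between $P^\star$ and $Q^\star$ whenever the pairing is comparable to $|P|^{d/(d+1)}|Q^\star|^{d/(d+1)}$. Applied with $P=B$ and $Q^\star=\tilde B^\star$ and combined with Lemmas~\ref{lemma:ifparaballsintersect} and~\ref{lemma:dualballsclose}, this yields $\varrho(B^\star,\tilde B^\star)\le C\eps^{-C}$; a controlled dilate of $B^\star$ then contains a large portion of $\tilde B^\star$, and the desired estimate $|E^\star\cap B^\star|\ge c\eps^{\gamma}|E^\star|$ follows. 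The technical heart is this quantitative rigidity: up to $\eps^{O(1)}$ losses, dual pairs are the only paraball configurations that can sustain a bilinear pairing comparable to the trivial volume bound.
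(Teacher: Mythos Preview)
Your alignment step contains a genuine gap. You claim that truncating to $E\cap B$ and $E^\star\cap\tilde B^\star$ and bounding the cross terms by H\"older gives
\[
\langle\chi_{E^\star\cap\tilde B^\star},\tpar\chi_{E\cap B}\rangle
\ge c\eps^{C}|E|^{d/(d+1)}|E^\star|^{d/(d+1)}.
\]
But Lemma~\ref{lemma:mainQEpaper} only guarantees $|E\cap B|\ge c\eps^{\gamma}|E|$, hence $|E\setminus B|\le(1-c\eps^{\gamma})|E|$. The H\"older bound on a cross term is then
\[
\langle\chi_{E^\star},\tpar\chi_{E\setminus B}\rangle
\le \apar\,(1-c\eps^{\gamma})^{d/(d+1)}|E|^{d/(d+1)}|E^\star|^{d/(d+1)},
\]
which for small $\eps$ is essentially $\apar|E|^{d/(d+1)}|E^\star|^{d/(d+1)}$ --- far larger than the total pairing $\eps|E|^{d/(d+1)}|E^\star|^{d/(d+1)}$ you are trying to dissect. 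The subtraction therefore yields nothing; the ``excised pieces carry mass at most $1-c\eps^{\gamma}$'' observation is useless here because you need the excised \emph{pairing}, not the excised \emph{measure}, to be small. Lemma~\ref{lemma:mainQEpaper} simply does not provide bilinear concentration, and the two independently-produced paraballs $B$ and $\tilde B^\star$ could in principle be completely unrelated.

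The paper does not attempt to deduce the proposition from Lemma~\ref{lemma:mainQEpaper}. Instead it invokes Theorem~1.2 of \cite{quasiextremal}, a genuinely bilinear result which produces a dual pair $(B,B^\star)$ \emph{simultaneously}, together with the lower bound
\[
\langle \tpar(\chi_{E\cap B}),\chi_{E^\star\cap B^\star}\rangle
\ge c\eps^C\langle \tpar(\chi_E),\chi_{E^\star}\rangle;
\]
the measure lower bounds then follow in one line from H\"older. The linkage between $B$ and $B^\star$ is built into that theorem's proof and is not recoverable from two separate applications of the unilinear Lemma~\ref{lemma:mainQEpaper}.
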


Under these hypotheses,
Theorem~1.2 of \cite{quasiextremal}
gives $|B|\le|E|$, $|B^\star|\le|E^\star|$,
and 
\[\langle \tpar(\chi_{E\cap B}),\chi_{E^\star\cap B^\star}\rangle
\ge c\eps^C\langle \tpar(\chi_E),\chi_{E^\star}\rangle
= c\eps^{C+1} |E|^{d/(d+1)}|E^\star|^{d/(d+1)}.\]
Since
$\langle \tpar(\chi_{E\cap B}),\chi_{E^\star\cap B^\star}\rangle
\le\apar|E\cap B|^{(d/(d+1)}|E^\star\cap B^\star|^{d/(d+1)}$,
the lower bounds 
$|E\cap B|\ge c\eps^C|E|$
and
$|E^\star\cap B^\star|\ge c\eps^C|E^\star|$
stated in Proposition~\ref{prop:dualparaballs} follow directly.

Although the inequality $\norm{\tpar f}_{q }\lesssim \norm{f}_{p }$
is dilation-invariant and is the only $L^p(\reals^d)\to L^q(\reals^d)$ inequality
valid for $\tpar$, it is nonetheless a suboptimal inequality within the
more general context of Lorentz spaces.  The following is Theorem~1.6 of \cite{quasiextremal}.
\begin{proposition}  \label{prop:lorentz}
$\tpar$ maps $L^{p ,r}$ boundedly to $L^{q }$ for all $p <r<q $.
\end{proposition}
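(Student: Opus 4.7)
The plan is to reduce the $L^{p,r}\to L^{q}$ estimate to a bilinear estimate on characteristic functions and then exploit the paraball-concentration structure guaranteed by Proposition~\ref{prop:dualparaballs}. Fix $r\in(p,q)$ and take $f\geq 0$ with $\|f\|_{p,r}\leq 1$. Using the rough level set decomposition $f = \sum_j 2^j \chi_{E_j}$, the hypothesis reads $\sum_j (2^j|E_j|^{1/p})^{r}\leq C$. By duality it suffices to bound $\langle g,\tpar f\rangle$ for nonnegative $g$ with $\|g\|_{q'}\leq 1$; decompose $g = \sum_k 2^k \chi_{F_k}$, so that $\sum_k (2^k|F_k|^{1/q'})^{q'}\leq C$. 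One is then reduced to estimating
$$
\langle g,\tpar f\rangle = \sum_{j,k} 2^{j+k}\,\langle \chi_{F_k},\tpar\chi_{E_j}\rangle.
$$

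Next I would introduce the normalized overlap $\eps_{jk}=\langle \chi_{F_k},\tpar\chi_{E_j}\rangle/\bigl(|E_j|^{1/p}|F_k|^{1/q'}\bigr)\in(0,\apar]$ and group the pairs dyadically by $\eps_{jk}\sim 2^{-m}$. For each $(j,k)$ in such a group, Proposition~\ref{prop:dualparaballs} supplies a dual pair of paraballs $(B_{jk},B^\star_{jk})$ with $|B_{jk}|\leq|E_j|$, $|B^\star_{jk}|\leq|F_k|$, and with fractions of size $\gtrsim 2^{-\gamma m}$ of $E_j$ and $F_k$ captured inside them. Lemma~\ref{lemma:ifparaballsintersect} then forces paraballs of comparable volume to have bounded overlap, so at each fixed volume scale only a limited number of the associated paraballs $B_{jk}$ are essentially distinct.

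The central task is to convert this almost-orthogonality into a decay estimate
$$
S_m := \sum_{(j,k):\,\eps_{jk}\sim 2^{-m}} 2^{j+k}|E_j|^{1/p}|F_k|^{1/q'} \leq C\,2^{-\delta m}
$$
for some $\delta=\delta(r)>0$; summing the resulting geometric series against the weight $2^{-m}$ then yields $\langle g,\tpar f\rangle\leq C$. The hypothesis $r<q$ enters precisely at this summation step: it ensures that the $\ell^{r}$ summability of $\{2^j|E_j|^{1/p}\}$, together with the dual $\ell^{q'}$ summability of $\{2^k|F_k|^{1/q'}\}$ and the paraball packing bound, is strong enough to absorb the polynomial loss $2^{Cm}$ arising from Proposition~\ref{prop:dualparaballs}, leaving a net geometric gain in $m$.

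The main obstacle, and the technical core of the proof, is the execution of this bookkeeping: grouping the paraballs $B_{jk}$ into clusters by volume scale, quantitatively controlling their overlaps via Lemma~\ref{lemma:ifparaballsintersect}, and balancing the two summation exponents $r$ and $q'$ against the $m$-dependent loss from the paraball extraction. This is precisely the analysis carried out in \cite{quasiextremal}, where the present proposition appears as Theorem~1.6.
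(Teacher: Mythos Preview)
The paper does not prove this proposition at all; it is simply quoted as Theorem~1.6 of \cite{quasiextremal}. The only information the paper gives about the argument is the remark that Lemma~\ref{lemma:comparablemeasures} ``was a principal ingredient in the proof of Proposition~\ref{prop:lorentz}.'' So there is nothing here to compare your sketch against line by line, but your outline does differ from the mechanism the paper points to.

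Your bilinear set-up and the dyadic grouping in $\eps_{jk}\sim 2^{-m}$ are the right framework. The divergence is at the geometric core. You invoke Proposition~\ref{prop:dualparaballs} to produce paraballs and then Lemma~\ref{lemma:ifparaballsintersect} to argue that ``paraballs of comparable volume have bounded overlap, so at each fixed volume scale only a limited number of the associated paraballs $B_{jk}$ are essentially distinct.'' This step is the weak point: even if many of the $B_{jk}$ coincide, that does not by itself bound the double sum, because distinct (disjoint) sets $E_j$ can all land in the same paraball. What one actually needs, for fixed $k$ and fixed interaction level $\eta$, is a bound on the \emph{range of indices} $j$ for which $\eps_{jk}\gtrsim\eta$. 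That is exactly what Lemma~\ref{lemma:comparablemeasures} delivers: if $E_j$ and $E_{j'}$ both interact with $F_k$ at level $\eta$, then $|E_{j'}|\le C\eta^{-C}|E_j|$, which together with $2^{jp}|E_j|\le 1$ pins down $|j-j'|$ up to a power of $\eta^{-1}$. This is the counting input that makes the $m$-sum converge; the paraball-overlap route you describe does not obviously provide it.

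So your final sentence is correct---the proof lives in \cite{quasiextremal}---but your description of ``precisely the analysis carried out'' there misidentifies the key lemma. Replace the appeal to Lemma~\ref{lemma:ifparaballsintersect} by Lemma~\ref{lemma:comparablemeasures}, and the sketch aligns with what the paper singles out as the essential ingredient.
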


The following was a principal ingredient in the proof of Proposition~\ref{prop:lorentz},
and will be needed again below. See Lemma 9.2 and inequality (9.10) of \cite{quasiextremal}.
\begin{lemma} \label{lemma:comparablemeasures}
For any $d\ge 2$ there exist $C,C'<\infty$ with the following
property.
Let $E,E',F\subset\reals^d$ be measurable sets with 
positive, finite measures.
Let $\eta\in (0,\apar]$.
If 
$T\chi_E(x)\ge \eta |E|^{1/p}|F|^{-1+1/p}$
and
$T\chi_{E'}(x)\ge \eta |E'|^{1/p}|F|^{-1+1/p}$
for every $x\in F$,
then $|E'|\le C'\eta^{-C}|E|$.
\end{lemma}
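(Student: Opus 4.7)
My plan is to apply Proposition~\ref{prop:dualparaballs} to each of the pairs $(E,F)$ and $(E',F)$. Integrating the pointwise hypotheses over $F$ yields
\[
\langle\chi_F,\tpar\chi_E\rangle \ge \eta |E|^{1/p}|F|^{1/p}
\quad\text{and}\quad
\langle\chi_F,\tpar\chi_{E'}\rangle \ge \eta |E'|^{1/p}|F|^{1/p},
\]
so both are $\eta$-quasiextremal pairs. The proposition thus furnishes dual paraball pairs $(B,B^\star)$ and $(B',(B')^\star)$ with $|B|\le|E|$, $|B^\star|\le|F|$, $|E\cap B|\ge c\eta^\gamma|E|$, $|F\cap B^\star|\ge c\eta^\gamma|F|$, and the analogous primed inequalities. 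In particular, writing $\asymp_\eta$ for comparability up to an $\eta^{-O(1)}$ factor, we have $|B|\asymp_\eta|E|$, $|B'|\asymp_\eta|E'|$, and $|B^\star|\asymp_\eta|F|\asymp_\eta|(B')^\star|$.

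The task then reduces to comparing $|B|$ with $|B'|$. I would aim to establish the single bound
\[
\varrho(B^\star,(B')^\star) \le C\eta^{-C}.
\]
By the duality invariance of Lemma~\ref{lemma:dualballsclose} this gives $\varrho(B,B')\le C\eta^{-C}$, and the definition of $\varrho$ then forces the scalar parameter $\rho$ and the convex body $\scriptc$ defining $B'$ to be comparable (within $\eta^{-O(1)}$) to those defining $B$; hence $|B|\asymp_\eta|B'|$. Combined with $|B|\asymp_\eta|E|$ and $|B'|\asymp_\eta|E'|$ this yields the desired $|E'|\le C\eta^{-C}|E|$.

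The main obstacle is precisely this $\varrho$-bound on the dual paraballs. The direct approach—trying to obtain it from a large intersection $B^\star\cap (B')^\star$ via Lemma~\ref{lemma:ifparaballsintersect}—fails, since although both $B^\star\cap F$ and $(B')^\star\cap F$ have measure $\ge c\eta^\gamma|F|$, for small $\eta$ these two substantial subsets of $F$ may be essentially disjoint. To get around this, I would use the transitive action of $\scriptg_d$ on paraballs together with the $\scriptg_d$-invariance of $\varrho$ (Lemma~\ref{lemma:dualballsclose}) to normalize $B^\star$ to the standard unit paraball. In this canonical frame $F$ lies in a region of bounded measure, and the \emph{pointwise} lower bound $\tpar\chi_{E'}(x)\ge\eta|E'|^{1/p}|F|^{-1+1/p}$ holding for every $x\in F$—not merely its integrated average—pins $E'$ to a bounded family of paraboloids through $F$. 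Iterating the combinatorial counting that underlies the proof of Proposition~\ref{prop:dualparaballs}, in the spirit of Lemma~9.2 of \cite{quasiextremal}, then extracts quantitative control of the parameters of $(B')^\star$, and Lemma~\ref{lemma:triangle} assembles these estimates into the required bound on $\varrho(B^\star,(B')^\star)$.
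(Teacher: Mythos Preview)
The paper does not prove this lemma; it simply quotes it from \cite{quasiextremal}, where it appears as Lemma~9.2. So there is no in-paper proof to compare against, and your task is to give a self-contained argument.

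Your outline is sound through the point where you obtain the dual pairs $(B,B^\star)$ and $(B',(B')^\star)$ with $|B|\asymp_\eta|E|$, $|B'|\asymp_\eta|E'|$, $|B^\star|\asymp_\eta|F|\asymp_\eta|(B')^\star|$, and your reduction to a bound $\varrho(B,B')\le C\eta^{-C}$ is correct (the first three terms in Definition~\ref{defn:varrho} control $\rho^\sharp/\rho^\flat$ and $|\scriptc^\sharp|/|\scriptc^\flat|$, hence the ratio of volumes). You also correctly identify the obstacle: $B^\star\cap F$ and $(B')^\star\cap F$ may be disjoint, so Lemma~\ref{lemma:ifparaballsintersect} does not apply directly.

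The gap is in your final paragraph. Invoking ``Lemma~9.2 of \cite{quasiextremal}'' is circular, since that \emph{is} the lemma under discussion, and ``iterating the combinatorial counting that underlies the proof of Proposition~\ref{prop:dualparaballs}'' is not an argument but a gesture toward one. Normalizing $B^\star$ gives $|F|\asymp_\eta 1$, but $F$ need not lie in any bounded region, so the claim that the pointwise bound ``pins $E'$ to a bounded family of paraboloids'' is not justified as stated.

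There is, however, a clean fix using only tools already in the paper, and it is exactly the maneuver carried out in the proof of Lemma~\ref{lemma:ballsfarapart}. Rather than applying Proposition~\ref{prop:dualparaballs} to the pair $(E',F)$, apply it to $(E',\,F\cap B^\star)$. The pointwise hypothesis on $T\chi_{E'}$ holds on all of $F\supset F\cap B^\star$, so integrating over $F\cap B^\star$ and using $|F\cap B^\star|\ge c\eta^\gamma|F|$ gives
\[
\langle \chi_{F\cap B^\star},\,T\chi_{E'}\rangle
\;\ge\; c\eta^{C}\,|E'|^{1/p}\,|F\cap B^\star|^{1/p},
\]
so this is again a quasiextremal pair. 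Proposition~\ref{prop:dualparaballs} now yields a dual pair $(\tilde B',\tilde B'^\star)$ with $|\tilde B'|\asymp_\eta|E'|$, $|\tilde B'^\star|\le|F\cap B^\star|$, and
\[
|\tilde B'^\star\cap B^\star|\;\ge\;|(F\cap B^\star)\cap\tilde B'^\star|\;\ge\;c\eta^{C}|F\cap B^\star|\;\ge\;c\eta^{C}\max\big(|\tilde B'^\star|,|B^\star|\big),
\]
since both $|\tilde B'^\star|$ and $|B^\star|$ are $\le|F|\le C\eta^{-C}|F\cap B^\star|$. Now Lemma~\ref{lemma:ifparaballsintersect} gives $\varrho(\tilde B'^\star,B^\star)\le C\eta^{-C}$, Lemma~\ref{lemma:dualballsclose} converts this to $\varrho(\tilde B',B)\le C\eta^{-C}$, and the volume comparison $|E'|\asymp_\eta|\tilde B'|\asymp_\eta|B|\asymp_\eta|E|$ follows.
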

The exponents in the hypotheses are natural; 
the hypotheses imply for instance that
$\langle \chi_F,T\chi_E\rangle\ge \eta|F|^{1/p}|E|^{1/p}$.

\section{Distant paraballs interact weakly}

The following lemma may at present seem unmotivated, but will later provide,
in the proof of Lemma~\ref{lemma:secondspatial},
the geometric input for perhaps the most central step of our analysis.
By a partition of a set we will always mean 
an expression as a union of pairwise disjoint subsets.

\begin{lemma} \label{lemma:ballsfarapart}
For each $d\ge 2$ there exists $C<\infty$ with the following property.
Let $\eta\in(0,1]$.
Let $\{B_\alpha: \alpha\in S\}$ 
be an arbitrary finite collection of paraballs in $\reals^d$
satisfying $\varrho(B_\alpha,B_\beta)\ge C\eta^{-C}$ whenever $\alpha\ne \beta$.
%Let $\alpha\mapsto k_\alpha$ be any integer-valued function
%satisfying
%$2^{k_\alpha}|B_\alpha|^{1/p }\le 1$ for all $\alpha$.
Let $F\subset \reals^d$ be a Lebesgue measurable set of finite measure.
Then $F$ can be measurably partitioned as $F=\cup_{\alpha\in S} F_\alpha$ in such a way that
\begin{equation} \label{eq:bfa0}
\langle \chi_{F_\beta},\tpar \chi_{B_\alpha}\rangle\le\eta |F|^{1/p }|B_\alpha|^{1/p }
\text{ whenever $\alpha\ne \beta$.}
\end{equation}
\end{lemma}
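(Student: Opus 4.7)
The plan is to partition $F$ by a greedy maximum rule based on the level sets $H_\alpha=\{x:\tpar\chi_{B_\alpha}(x)>\lambda_\alpha\}$, where $\lambda_\alpha=\tfrac14\eta|B_\alpha|^{1/p}|F|^{-1/p'}$. For each $x\in F$, I let $\alpha(x)$ maximize $\tpar\chi_{B_\alpha}(x)/|B_\alpha|^{1/p}$ (with deterministic tie-breaking, and an arbitrary assignment when the maximum is zero), and set $F_\beta=\{x\in F:\alpha(x)=\beta\}$. For any $\alpha\ne\beta$, I split $F_\beta=(F_\beta\setminus H_\alpha)\cup(F_\beta\cap H_\alpha)$. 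The first piece contributes at most $\lambda_\alpha|F|=\tfrac14\eta|F|^{1/p}|B_\alpha|^{1/p}$ to $\langle\chi_{F_\beta},\tpar\chi_{B_\alpha}\rangle$. On the second piece, the maximum rule combined with $x\in H_\alpha$ forces $x\in H_\beta$ and $\tpar\chi_{B_\alpha}(x)\le(|B_\alpha|/|B_\beta|)^{1/p}\tpar\chi_{B_\beta}(x)$, so the contribution is at most $(|B_\alpha|/|B_\beta|)^{1/p}\langle\chi_{H_\alpha\cap H_\beta},\tpar\chi_{B_\beta}\rangle$.

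It therefore suffices to prove a key estimate $|H_\alpha\cap H_\beta|\le C'\eta^{C_1}|F|$ with $C_1>p$, so that H\"older applied to the operator bound yields $\langle\chi_{H_\alpha\cap H_\beta},\tpar\chi_{B_\beta}\rangle\le\apar|H_\alpha\cap H_\beta|^{1/p}|B_\beta|^{1/p}\le\tfrac{\eta}{2}|F|^{1/p}|B_\beta|^{1/p}$. I prove this by contradiction. Set $\mu=|H_\alpha\cap H_\beta|/|F|$. Since $\tpar\chi_{B_\gamma}\ge\lambda_\gamma$ on the overlap for $\gamma=\alpha,\beta$, the relevant $\eps$-parameter of Proposition~\ref{prop:dualparaballs} applied to $(B_\alpha,H_\alpha\cap H_\beta)$ is bounded below by a positive power of $\eta\mu^{1/p'}$. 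This yields a dual pair $(\tilde B_\alpha,\tilde B_\alpha^\star)$ with $|B_\alpha\cap\tilde B_\alpha|\ge c_1|B_\alpha|$ and $|(H_\alpha\cap H_\beta)\cap\tilde B_\alpha^\star|\ge c_1|H_\alpha\cap H_\beta|$, where $c_1$ is a positive power of $\eta\mu$. Setting $F'=(H_\alpha\cap H_\beta)\cap\tilde B_\alpha^\star$ and applying Proposition~\ref{prop:dualparaballs} once more to $(B_\beta,F')$ produces $(\tilde B_\beta,\tilde B_\beta^\star)$ with $|F'\cap\tilde B_\beta^\star|\ge c_2|F'|$ for another positive power $c_2$ of $\eta\mu$. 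Since $F'\subseteq\tilde B_\alpha^\star$ and $|\tilde B_\beta^\star|\le|F'|$, this forces $\tilde B_\alpha^\star$ and $\tilde B_\beta^\star$ to have substantial relative intersection.

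Lemma~\ref{lemma:ifparaballsintersect} then bounds $\varrho(\tilde B_\alpha^\star,\tilde B_\beta^\star)$ above by a polynomial in $(\eta\mu)^{-1}$; Lemma~\ref{lemma:dualballsclose} transports the bound to $\varrho(\tilde B_\alpha,\tilde B_\beta)$; and Lemma~\ref{lemma:ifparaballsintersect} also bounds $\varrho(B_\gamma,\tilde B_\gamma)$ for $\gamma=\alpha,\beta$. Two applications of the quasi-triangle inequality Lemma~\ref{lemma:triangle} then produce $\varrho(B_\alpha,B_\beta)\le C_4\eta^{-C_2}\mu^{-C_3}$ for fixed constants depending only on $d$. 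Pitting this against the hypothesis $\varrho(B_\alpha,B_\beta)\ge C\eta^{-C}$, with the absolute constant $C$ chosen much larger than $C_2$, forces $\mu\le C'\eta^{C_1}$ with $C_1$ as large as we like, and in particular $C_1>p$. The main obstacle will be the precise bookkeeping of exponents through the two applications of Proposition~\ref{prop:dualparaballs} and the iterated triangle inequality: we must beat the trivial H\"older bound by the genuine factor of $\eta$ rather than a mere absolute constant, and this is achieved only because the hypothesis constant $C$ may be enlarged at will.
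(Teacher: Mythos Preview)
Your proof is correct and follows essentially the same route as the paper's: both partition $F$ via the super-level sets of $\tpar\chi_{B_\alpha}$ at threshold $\sim\eta|B_\alpha|^{1/p}|F|^{-1/p'}$, and both reduce the cross-term bound to two applications of Proposition~\ref{prop:dualparaballs} followed by Lemmas~\ref{lemma:dualballsclose}, \ref{lemma:ifparaballsintersect}, and~\ref{lemma:triangle} to force $\varrho(B_\alpha,B_\beta)$ to be small. The only cosmetic differences are that you use a maximum rule (giving the pointwise comparison $\tpar\chi_{B_\alpha}\le(|B_\alpha|/|B_\beta|)^{1/p}\tpar\chi_{B_\beta}$ on $F_\beta$) where the paper uses an arbitrary disjointification of the $\tilde F_\beta$, and you bound $|H_\alpha\cap H_\beta|$ directly from the hypothesis on $\varrho$ whereas the paper phrases the same chain of inequalities as a contradiction starting from a putative large cross term.
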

\noindent
Here $C$ depends only on the dimension $d$, not on $\eta$.
% ?? Again, as for lemma:levelsfarapart, we need a stronger version
% in which widely separated paraballs are replaced by widely separated
% clusters of at most moderately separated paraballs.

\begin{proof}
Define \[\gamma_\beta = \tfrac13 \eta|F|^{-1+1/p}|B_\beta|^{1/p}\]
and
\[\tilde F_\beta = \{x\in F: \tpar \chi_{B_\beta}(x)>\gamma_\beta\},\]
noting that
\begin{equation} \label{eq:bfa1}
\langle \chi_{F\setminus \tilde F_\alpha},\,\tpar \chi_{B_\alpha}\rangle
\le \int_{F\setminus \tilde F_\alpha} \gamma_\alpha
\le \gamma_\alpha |F| 
= \tfrac13 \eta |F|^{1/p}|B_\alpha|^{1/p}.
\end{equation}
Choose pairwise disjoint measurable sets $F_\beta\subset\tilde F_\beta$
so that $\cup_\beta \tilde F_\beta = \cup_\beta F_\beta$.
Their union is not necessarily all of $F$, 
but $F^\dagger=F\setminus\cup_\beta F_\beta$ already satisfies 
$\langle \chi_{F^\dagger},\,\tpar\chi_{B_\alpha}\rangle\le\tfrac13 \eta|F|^{1/p}|B_\alpha|^{1/p}$
for every $\alpha$ by \eqref{eq:bfa1}, so 
it suffices to prove that for all $\alpha\ne\beta$,
\begin{equation} \label{eq:modifiedgoal}
\langle \chi_{F_\beta},\tpar\chi_{B_\alpha}\rangle
\le \tfrac23 \eta |F|^{1/p}|B_\alpha|^{1/p}.
\end{equation}
%Since $\langle \chi_{F_\beta},\tpar\chi_{B_\alpha}\rangle
%\le \apar|F_\beta|^{1/p}|B_\alpha|^{1/p}$,
%\eqref{eq:modifiedgoal} forces
%$|F_\beta|\ge (2/3)^{-p} \eta^p\apar^{-p}|F|$.

We prove \eqref{eq:modifiedgoal} by contradiction. Suppose that there exist indices
$\alpha\ne\beta\in S$ for which \eqref{eq:modifiedgoal} fails to hold.
These indices will remain fixed for the remainder of this proof.
We aim to prove that $\varrho(B_\alpha,B_\beta)$ is small,
contradicting the hypothesis.

Set $\scriptf=F_\beta\cap \tilde F_\alpha$.
Then $\langle \chi_{F_\beta\setminus\tilde F_\alpha},\,\tpar\chi_{B_\alpha}\rangle
\le \tfrac13 \eta|F_\beta|^{1/p}|B_\alpha|^{1/p}$ as in \eqref{eq:bfa1},
so 
\begin{equation}\label{eq:bfanegation}
\langle \chi_\scriptf,\,\tpar\chi_{B_\alpha}\rangle 
= 
\langle \big(\chi_{F_\beta}-\chi_{F_\beta\setminus\tilde F_\alpha}\big),
\,\tpar\chi_{B_\alpha}\rangle 
\ge \tfrac13 \eta |F|^{1/p}|B_\alpha|^{1/p}.
%\ge \tfrac13 \eta |\scriptf|^{1/p}|B_\alpha|^{1/p}.
\end{equation}
Since $
\langle \chi_\scriptf,\,\tpar\chi_{B_\alpha}\rangle 
\le \apar|\scriptf|^{1/p}|B_\alpha|^{1/p}$ by definition
of $\apar$,
this forces
\[
|\scriptf|\ge 3^{-p}\eta^p\apar^{-p}|F|.
\]

Given $B_\alpha,B_\beta,\scriptf$ as above, apply Proposition~\ref{prop:dualparaballs}
with $E=B_\alpha$, $E^\star=\scriptf$ to obtain a pair
$\scriptb^\alpha
(z_\alpha,\earrow_\alpha,\rarrow_\alpha,\rho_\alpha)
=(B^\alpha,B_\star^\alpha)$
satisfying 
\begin{alignat*}{2}
&|B^\alpha|\le|B_\alpha|,
&&|B^\alpha_\star|\le |\scriptf|\le|F|,
\\
&|B^\alpha\cap B_\alpha|\ge c\eta^\gamma|B_\alpha|,
\qquad
&&|B^\alpha_\star\cap\scriptf|\ge c\eta^\gamma|\scriptf|\ge c\eta^\gamma|F|
\end{alignat*}
for certain constants $c,\gamma$,
whose values have changed from one occurrence to the next.

%the conclusions stated. Here $\eps\ge\eta$,
%and of course $\eps\le\apar$ by the definition of $\apar$.
% ?? this is a fragment, left over after some edit. Do I need it somewhere?

Set $\tilde\scriptf= \scriptf\cap B_\star^\alpha$.
We know already that $|\tilde\scriptf|\ge c\eta^\gamma|F|$.
Since $\tpar\chi_{B_\beta}(x)>\gamma_\beta$ for every
$x\in F_\beta\supset\scriptf\supset\tilde\scriptf$,
\begin{align*}
\langle \chi_{\tilde\scriptf},\tpar\chi_{B_\beta}\rangle
\ge \gamma_\beta|\tilde\scriptf|
= \tfrac13 \eta |\tilde\scriptf|\cdot|F|^{-1+1/p}|B_\beta|^{1/p}
\ge c\eta^\gamma |\tilde\scriptf|^{1/p}|B_\beta|^{1/p}
\end{align*}
for certain positive constants $c,\gamma$.
%B^\star(z_\alpha,\earrow_\alpha,\rarrow_\alpha,\rho_\alpha)
Consequently Proposition~\ref{prop:dualparaballs} can be applied again, 
this time with $E=B_\beta$ and $E^\star =\tilde\scriptf$,
to obtain a pair
$\scriptb^\beta
(z_\beta,\earrow_\beta,\rarrow_\beta,\rho_\beta)
=(B^\beta,B_\star^\beta)$
satisfying
\begin{alignat*}{2}
&|B^\beta|\le|B_\beta|,
\qquad&& |B^\beta_\star|\le |\tilde\scriptf|\le |F|,
\\
&|B^\beta\cap B_\beta|\ge c\eta^\gamma|B_\beta|,
\qquad&& |B^\beta_\star\cap\tilde\scriptf|\ge c\eta^\gamma|\tilde\scriptf|\ge c\eta^\gamma|F|
\end{alignat*}
for certain constants $c,\gamma$.

Since 
$B_\star^\beta\cap B_\star^\alpha\supset B_\star^\beta\cap B_\star^\alpha\cap\scriptf
= B_\star^\beta\cap\tilde\scriptf$,
\begin{equation*}
|B_\star^\beta\cap B_\star^\alpha|
\ge |B_\star^\beta\cap\tilde\scriptf|
\ge c\eta^\gamma|F|
\ge c\eta^\gamma \max(B_\star^\beta,B_\star^\alpha).
\end{equation*}
By Lemmas~\ref{lemma:ifparaballsintersect} and \ref{lemma:dualballsclose},
this implies that
\[
\varrho(B^\alpha,B^\beta)\le C\eta^{-C}.
\]
Since $|B^\alpha|\le|B_\alpha|$ and 
$|B^\beta|\le|B_\beta|$,
while 
$|B^\alpha\cap B_\alpha|\ge c\eta^\gamma |B_\alpha|$ 
and
$|B^\beta\cap B_\beta|\ge c\eta^\gamma |B_\beta|$,
one has
\[
\varrho(B^\beta,B_\beta)\le C\eta^{-C}
\text{ and }
\varrho(B^\alpha,B_\alpha)\le C\eta^{-C}\]
by Lemma~\ref{lemma:ifparaballsintersect}.
Therefore by the quasi-triangle inequality of
Lemma~\ref{lemma:triangle}, 
\[
\varrho(B_\alpha,B_\beta)\le C\eta^{-C}.
\]
This contradicts the assumption that $\varrho(B_\alpha,B_\beta)$ is sufficiently large.
\end{proof}

\section{Step 1: Entropy refinement}

According to Proposition~\ref{prop:lorentz}, the inequality
\eqref{eq:paraboloid}, while scale-invariant, is not sharp within
the scale of Lorentz spaces. From this lack of optimality there follows useful information. 

\begin{lemma} \label{lemma:lorentz}
% For any $\eps>0$ there exists $N<\infty$ with the following property. 
For any $d\ge 2$ 
there exists $C<\infty$ with the following property.
Let $\eps>0$.
Let $f$ be any nonnegative measurable function in $L^{p }(\reals^d)$.
Then there exist an index set $S\subset\integers$
of cardinality $|S|\le C\eps^{-C}$
and a function $\tilde f$ satisfying $0\le \tilde f\le f$
with rough level set decomposition
$\tilde f = \sum_{j\in S}2^jf_j$, 
%$f_j\leftrightarrow E_j$, 
such that
\begin{equation}
\norm{\tpar\tilde f}_{q }\ge (1-\eps)\norm{\tpar f}_{q }.
\end{equation}
\end{lemma}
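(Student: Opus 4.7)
The plan is to exploit Proposition~\ref{prop:lorentz}, which provides the strictly stronger mapping $\tpar:L^{p,r}\to L^q$ for any $r$ with $p<r<q$. Fix such an $r$ and let $B_r$ denote its operator norm. Using the rough level set decomposition $f=\sum_j 2^j f_j$, $f_j\leftrightarrow E_j$, set $a_j=2^j|E_j|^{1/p}$, and after re-indexing assume $(a_j)$ is non-increasing. For $N=N(\eps)$ to be chosen, let $S=\{1,\dots,N\}$, $\tilde f=\sum_{j\in S} 2^j f_j$, and $h=f-\tilde f$. Then $\tpar f=\tpar\tilde f+\tpar h$, so it suffices to arrange $\norm{\tpar h}_{q}\le\eps\norm{\tpar f}_{q}$.

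Applying Proposition~\ref{prop:lorentz} to $h$ gives
\[
\norm{\tpar h}_{q}\le B_r\norm{h}_{p,r}=B_r\Bigl(\sum_{k>N}a_k^r\Bigr)^{1/r}.
\]
The elementary decreasing-rearrangement estimate $k\,a_k^p\le\sum_j a_j^p=\norm{f}_p^p$ yields $a_k\le k^{-1/p}\norm{f}_p$; since $r>p$, the tail is controlled by $\sum_{k>N}a_k^r\le C_r\norm{f}_p^r N^{1-r/p}$, whence $\norm{\tpar h}_{q}\le C_r' B_r\norm{f}_p N^{-(r-p)/(pr)}$. Proposition~\ref{prop:lorentz} applied to $f$ itself provides the complementary lower bound $\norm{f}_{p,r}\ge \norm{\tpar f}_{q}/B_r$; if the Lorentz inequality is essentially saturated in the sense that $\norm{f}_p\le K\norm{\tpar f}_{q}$ for a universal $K$, then choosing $N=\lceil C\eps^{-pr/(r-p)}\rceil$ yields $\norm{\tpar h}_{q}\le\eps\norm{\tpar f}_{q}$, closing the argument.

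The principal obstacle is the case in which the Lorentz inequality is far from saturated, that is, when $\norm{f}_p/\norm{\tpar f}_{q}$ is large. In this regime the direct top-$N$ truncation forces $N$ to grow with the effective entropy of the sequence $(a_j)$, which is not uniformly bounded in $\eps$ alone. To overcome this, I expect to iterate Lemma~\ref{lemma:mainQEpaper}: so long as $\norm{\tpar h}_{q}>\eps\norm{\tpar f}_{q}$, that lemma applied to the residual $h$ produces a level $j$ and a paraball $B$ with $\norm{2^j f_j\chi_B}_p\ge c\eps^C\norm{h}_p$; adjoining this $j$ to $S$ strictly decreases $\norm{h}_p^p$ by at least a factor $1-c'\eps^{C'}$. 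After at most $O(\eps^{-C''})$ such iterations $\norm{h}_p$ drops below the threshold needed to force $\norm{\tpar h}_{q}\le\eps\norm{\tpar f}_{q}$. Lemma~\ref{lemma:ballsfarapart} should ensure that the paraballs extracted at successive stages interact weakly, so that their contributions accumulate into $\tilde f$ without cancelling, and the bound $|S|\le C\eps^{-C}$ follows.
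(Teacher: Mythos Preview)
Your first paragraph is essentially the paper's proof. Sorting the $a_j$ and keeping the top $N$ is the same device as the paper's thresholding at a fixed level $\eta$: the paper sets $S=\{j:a_j>\eta\}$, bounds $|S|\le\eta^{-p}$ via $\sum_j a_j^p\le\|f\|_p^p$, and uses Proposition~\ref{prop:lorentz} exactly as you do to get $\|\tpar(f-\tilde f)\|_q\le C\eta^{1-p/r}\|f\|_p^{p/r}$. After normalizing $\|f\|_p=1$ and choosing $\eta$ (equivalently $N$) in terms of $\eps$, both arguments yield $\|\tpar(f-\tilde f)\|_q\le\eps$ with $|S|\le C\eps^{-C}$. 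Stop there and you have the paper's argument verbatim.

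The obstacle you then raise --- the case $\|f\|_p/\|\tpar f\|_q$ large --- is real, but the paper does not address it either: it simply normalizes $\|f\|_p=1$, obtains $\|\tpar(f-\tilde f)\|_q\le\eps$, and declares the conclusion. That is only the stated multiplicative bound when $\|\tpar f\|_q\gtrsim 1$, i.e.\ when $f$ is a quasiextremal. Every subsequent use of the lemma (Lemmas~\ref{lemma:lorentzsecondversion}, \ref{lemma:lorentzthirdversion}, \ref{lemma:levelsfarapart}) is for $(1-\delta)$-quasiextremals, so this suffices. In fact the lemma as literally stated is false for general $f$: take $f=\sum_{j=1}^M g_j$ with each $g_j$ a dilate of a fixed quasiextremal bump so that $g_j$ lives at dyadic height $2^j$ with $\|g_j\|_p=1$ and $\|\tpar g_j\|_q=c$; then $\|\tpar f\|_q\ge cM^{1/q}$, while any $\tilde f$ built from $N$ levels has $\|\tpar\tilde f\|_q\le\apar N^{1/p}$, forcing $N\gtrsim M^{p/q}\to\infty$.

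Consequently your proposed repair via iterating Lemma~\ref{lemma:mainQEpaper} cannot succeed, and indeed it does not: applied to the residual $h$, that lemma gives $\|2^j f_j\chi_B\|_p\ge c(\eps')^C\|h\|_p$ with $\eps'=\|\tpar h\|_q/\|h\|_p$. Under your running hypothesis $\|\tpar h\|_q>\eps\|\tpar f\|_q$ and $\|h\|_p\le\|f\|_p$ you only get $\eps'\ge\eps\,\|\tpar f\|_q/\|f\|_p$, so the per-step decrease of $\|h\|_p^p$ --- and hence the iteration count --- still depends on precisely the ratio you are trying to eliminate. The appeal to Lemma~\ref{lemma:ballsfarapart} is misplaced: that lemma concerns weak interaction of spatially separated paraballs and plays no role in this entropy bound, which is purely about the distribution of level-set heights.
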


\begin{proof}
Choose any $r\in (p ,q )$.
Let $f$ have a rough level set decomposition $f\equiv \sum_{j\in\integers}2^j f_j$,
$f_j\leftrightarrow E_j$. 
Let $\eta>0$ be a small parameter.
Define 
\[S=\{j: 2^{j}|E_j|^{1/p }>\eta\}
\text{ and } \tilde f =\sum_{j\in S} 2^jf_j.\]
Then
\begin{multline*}
\norm{f-\tilde f}_{p,r}^r
= \sum_{j\notin S} (2^{j}|E_j|^{1/p})^r
= \sum_{j\notin S} 
(2^j|E_j|^{1/p})^p
(2^j|E_j|^{1/p})^{r-p}
\\
\le \eta^{r-p} \sum_{j\in\integers}
(2^j|E_j|^{1/p})^p
= \eta^{r-p}\norm{f}_{p}^p
\end{multline*}
by H\"older's inequality.
Therefore 
\[
\norm{\tpar(f-\tilde f)}_{q }
\le C \norm{f-\tilde f}_{p,r}
\le C\eta^{1-p/r}\norm{f}_p^{p/r}
\]
where $C<\infty$ is the norm
of $\tpar$ as an operator from $L^{p ,r}$ to $L^{q }$.
Moreover,
\[
\eta^{p}|S|= \sum_{j\in S} \eta^p \le \sum_{j} 2^{jp}|E_j|\le \norm{f}_p^p.
\]
We may assume without loss of generality that $\norm{f}_p=1$.
Then defining $\eta$ to satisfy $C\eta^{1-p/r}=\eps$ gives the conclusion stated.
\end{proof}

\begin{lemma} \label{lemma:lorentzsecondversion}
Let $f\ge 0$ satisfy $\norm{\tpar f}_q\ge (1-\delta)\apar\norm{f}_p$.
Then the function $\tilde f$ in Lemma~\ref{lemma:lorentz}
can be chosen to satisfy 
\begin{equation}
\norm{f-\tilde f}_p
\le C\, (\eps+\delta)^{1/p}\,\norm{f}_p
\end{equation}
in addition to all the conclusions of Lemma~\ref{lemma:lorentz}.
\end{lemma}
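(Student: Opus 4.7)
The plan is to obtain the additional $L^p$ bound for free from the construction in Lemma~\ref{lemma:lorentz} together with the operator-norm definition of $\apar$ and the disjoint-support structure of the rough level set decomposition. I would take the $\tilde f$ produced by the proof of Lemma~\ref{lemma:lorentz} (so all of its conclusions are automatic) and argue purely about $\|\tilde f\|_p$ versus $\|f\|_p$.

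First, by the hypothesis and the guarantee $\|\tpar\tilde f\|_q\ge(1-\eps)\|\tpar f\|_q$ from Lemma~\ref{lemma:lorentz}, we have
\begin{equation*}
\|\tpar\tilde f\|_q\ \ge\ (1-\eps)(1-\delta)\apar\,\|f\|_p.
\end{equation*}
On the other hand, by the very definition of $\apar$,
\begin{equation*}
\|\tpar\tilde f\|_q\ \le\ \apar\,\|\tilde f\|_p,
\end{equation*}
so $\|\tilde f\|_p\ge(1-\eps)(1-\delta)\|f\|_p\ge(1-\eps-\delta)\|f\|_p$.

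Next, the construction $\tilde f=\sum_{j\in S}2^j f_j$ with $f_j\leftrightarrow E_j$ and the $E_j$ pairwise disjoint means $\tilde f$ and $f-\tilde f=\sum_{j\notin S}2^j f_j$ have disjoint supports. Hence the $L^p$ norms decompose additively in the $p$-th power:
\begin{equation*}
\|f\|_p^{\,p}\ =\ \|\tilde f\|_p^{\,p}+\|f-\tilde f\|_p^{\,p}.
\end{equation*}
Combining this with the lower bound on $\|\tilde f\|_p$ gives
\begin{equation*}
\|f-\tilde f\|_p^{\,p}\ \le\ \bigl(1-(1-\eps-\delta)^p\bigr)\|f\|_p^{\,p}\ \le\ C(\eps+\delta)\|f\|_p^{\,p},
\end{equation*}
where in the last step we use $1-(1-t)^p\le pt$ for $t\in[0,1]$ (and truncate to $t\le 1$, since for $\eps+\delta\ge 1$ the asserted bound is trivial with $C=2$, as $\|f-\tilde f\|_p\le\|f\|_p$). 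Taking $p$-th roots yields the claimed inequality $\|f-\tilde f\|_p\le C(\eps+\delta)^{1/p}\|f\|_p$.

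There is essentially no obstacle here: the only subtle point is to recognize that the rough level set decomposition gives disjoint supports, so that one obtains the sharp additive identity for the $p$-th powers rather than merely a triangle inequality. Everything else follows immediately from chaining the two bounds on $\|\tpar\tilde f\|_q$.
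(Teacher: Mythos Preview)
Your proof is correct and follows essentially the same approach as the paper's own proof. The paper likewise uses the disjoint-support identity $\|f\|_p^p=\|\tilde f\|_p^p+\|f-\tilde f\|_p^p$, then bounds $\|\tilde f\|_p$ from below via $\|\tpar\tilde f\|_q\le\apar\|\tilde f\|_p$ together with $\|\tpar\tilde f\|_q\ge(1-\eps)\|\tpar f\|_q\ge(1-\eps)(1-\delta)\apar\|f\|_p$; the only cosmetic difference is that the paper keeps the factor as $(1-\eps)^p(1-\delta)^p$ rather than passing first to $(1-\eps-\delta)^p$.
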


\begin{proof}
Construct $\tilde f$ as in the proof of Lemma~\ref{lemma:lorentz}.
Since \[\norm{\tpar\tilde f}_q\le \apar\norm{\tilde f}_p\]
and \[\norm{\tpar\tilde f}_q\ge (1-\eps)\norm{\tpar f}_q,\]
it follows that
\[\norm{\tilde f}_p \ge \apar^{-1}(1-\eps)\norm{\tpar f}_q.\]
Since $\tilde f,f-\tilde f$ have disjoint supports,
\begin{align*}
\norm{f-\tilde f}_p^p 
&= \norm{f}_p^p-\norm{\tilde f}_p^p
\\
&\le \norm{f}_p^p-(1-\eps)^p\apar^{-p}\norm{\tpar f}_q^p
\\
&\le \norm{f}_p^p-(1-\eps)^p\apar^{-p}(1-\delta)^p\apar^p\norm{f}_p^p
\\
&= \big[1-(1-\eps)^p(1-\delta)^p\big] \norm{f}_p^p
\\
&\le (C\eps+C\delta)\norm{f}_p^p.
\end{align*}
\end{proof}

The upshot is that near-extremals have low entropy, in the sense
that relatively few terms in their rough level set decompositions
suffice to approximate them to a specified degree of accuracy.

We have implicitly also established the following 
variant of Lemma~\ref{lemma:lorentzsecondversion}.
\begin{lemma} \label{lemma:lorentzthirdversion}
There exists $c,C\in\reals^+$ with the following property.
Suppose that $0\le f\in L^p$, and 
let $f$ 
satisfy $\norm{\tpar f}_q\ge (1-\delta)\apar\norm{f}_p$,
and have rough level set decomposition $f=\sum_j 2^j f_j$, $f_j\leftrightarrow E_j$.
Then for any $\eta\in(0,1]$,
\begin{equation*}
\Big\|
\sum_{j: 2^j|E_j|^{1/p}<\eta\norm{f}_p} 2^j f_j
\Big\|_p\le C(\delta^{1/p}+\eta^c)\norm{f}_p.
\end{equation*}
\end{lemma}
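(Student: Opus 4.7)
My plan is to observe that this lemma is essentially what was \emph{implicitly} shown in the preceding two proofs, so the task is to extract the quantitative dependence on $\eta$ (rather than on the implicit $\eps$ appearing in Lemma~\ref{lemma:lorentz}). Normalize so that $\|f\|_p=1$. Define
\[
S_\eta=\{j:2^j|E_j|^{1/p}\ge \eta\},\qquad \tilde f=\sum_{j\in S_\eta} 2^jf_j,
\]
so that the function to be estimated is exactly $f-\tilde f$. The two summands $\tilde f$ and $f-\tilde f$ have disjoint supports, hence $\|f-\tilde f\|_p^p=\|f\|_p^p-\|\tilde f\|_p^p$, and it suffices to produce a lower bound on $\|\tilde f\|_p$.

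To bound $\|\tilde f\|_p$ from below, I would first run the Lorentz-space computation from the proof of Lemma~\ref{lemma:lorentz} verbatim: fix any $r\in(p,q)$, so that by H\"older in $j$,
\[
\|f-\tilde f\|_{p,r}^r
=\sum_{j\notin S_\eta}\bigl(2^j|E_j|^{1/p}\bigr)^r
\le \eta^{r-p}\sum_j\bigl(2^j|E_j|^{1/p}\bigr)^p
=\eta^{r-p}\|f\|_p^p.
\]
Applying Proposition~\ref{prop:lorentz} and denoting $c_0:=1-p/r>0$, this gives
\[
\|\tpar(f-\tilde f)\|_q\le C\|f-\tilde f\|_{p,r}\le C\eta^{c_0},
\]
and therefore by the triangle inequality and the hypothesis $\|\tpar f\|_q\ge(1-\delta)\apar$,
\[
\apar\|\tilde f\|_p\;\ge\;\|\tpar\tilde f\|_q\;\ge\;(1-\delta)\apar-C\eta^{c_0}.
\]

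Rearranging yields $\|\tilde f\|_p\ge 1-\delta-C\eta^{c_0}$ (with a harmless adjustment of $C$). Feeding this into the disjoint-support identity,
\[
\|f-\tilde f\|_p^p=1-\|\tilde f\|_p^p\le 1-\bigl(1-\delta-C\eta^{c_0}\bigr)_+^p\le C'(\delta+\eta^{c_0}),
\]
using the elementary inequality $1-(1-t)_+^p\le Ct$ for $t\ge 0$. Taking $p$-th roots and splitting the sum $\delta+\eta^{c_0}$ gives the claimed bound $\|f-\tilde f\|_p\le C(\delta^{1/p}+\eta^c)$ with $c=c_0/p>0$, and restoring the factor $\|f\|_p$ completes the proof. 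There is no genuine obstacle here — this is purely a bookkeeping step that isolates, as a standalone statement, exactly the chain of inequalities already carried out in Lemmas~\ref{lemma:lorentz} and \ref{lemma:lorentzsecondversion}; the only care needed is to track that the parameter controlling the Lorentz-tail estimate is the threshold $\eta$ itself rather than the auxiliary $\eps$ used earlier.
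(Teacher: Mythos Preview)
Your proposal is correct and is exactly the argument the paper has in mind: the paper gives no separate proof of this lemma, stating only that it has been ``implicitly also established'' by the preceding two proofs, and what you have written is precisely the extraction of that implicit argument with the threshold parameter relabeled from $\eps$ to $\eta$. Your chain of inequalities matches the proofs of Lemmas~\ref{lemma:lorentz} and \ref{lemma:lorentzsecondversion} step for step.
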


\section{Step 2: Weak higher integrability}

In Lemma~\ref{lemma:lorentz} we obtained a very weak form of
precompactness, in the form of an {\it a priori} bound on the
cardinality of the index set $S$ in the sum
$\tilde f= \sum_{j\in S} 2^jf_j$.
The next step is to show that the indices in $S$ cannot be 
far apart from one another. 

\begin{lemma} \label{lemma:levelsfarapart} \label{lemma:step2}
There exist constants $c,C,\tilde C\in(0,\infty)$, depending only
on the dimension $d$, with the following property.
Let $\rho\in(0,1)$.
% ?? explain c,C
Let $f$ be a $(1-\delta)$-quasiextremal for \eqref{eq:paraboloid}
satisfying $\norm{f}_{p }=1$.
If $\delta\le c\rho^C$,
then there exists a function $\tilde f$ 
satisfying $\norm{f-\tilde f}_{p }\le C\rho^c$
with a rough level set decomposition
$\tilde f=\sum_{j} 2^j f_j$
such that if
both $\norm{2^i f_i}_p\ge\rho$
and $\norm{2^j f_j}_p\ge\rho$,
then
\[|i-j|\le \tilde C
% \log(1/\rho).
\rho^{-\tilde C}\]
\end{lemma}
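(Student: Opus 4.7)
The strategy is proof by contradiction: if two levels $i_0,j_0$ with $\|2^{i_0}f_{i_0}\|_p,\,\|2^{j_0}f_{j_0}\|_p\ge\rho$ have $|i_0-j_0|$ very large, then the corresponding pieces of $f$ should act almost independently under $\tpar$, and the strict embedding $\ell^p\hookrightarrow\ell^q$ (valid because $p<q$) forces a positive deficit in $\|\tpar f\|_q/\|f\|_p$, contradicting near-extremality. The necessary ``independence'' will come from Lemma~\ref{lemma:ballsfarapart}.

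First I would reduce to a finite rough level set sum. Apply Lemma~\ref{lemma:lorentzsecondversion} with $\eps=c\rho^C$ to produce $\tilde f=\sum_{j\in S_1}2^jf_j$ with $|S_1|\le C\rho^{-C}$, $\|\tpar(f-\tilde f)\|_q\le\eps$, and $\|f-\tilde f\|_p\le C\rho^c$. Then apply Lemma~\ref{lemma:lorentzthirdversion} to drop levels with $\|2^jf_j\|_p$ below $\rho^A$ for a suitable $A\in(1,\infty)$, at cost of another $O(\rho^c)$ in $L^p$. Let $S$ denote the surviving index set; $|S|\le C\rho^{-C}$, $S$ contains every $\rho$-significant index, and $\tilde f$ remains a $(1-\delta')$-quasiextremal with $\delta'=O(\delta+\rho^c)$.

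Next I would extract paraballs. Set $g=(\tpar\tilde f)^{q-1}/\|\tpar\tilde f\|_q^{q-1}$, so $\|g\|_{q'}=1$ and $\langle g,\tpar\tilde f\rangle=\|\tpar\tilde f\|_q\ge(1-\delta')\apar$. Apply the same Lorentz truncation to $g$, yielding $\tilde g=\sum_{k\in T}2^kg_k$ with $|T|\le C\rho^{-C}$ and $g_k\leftrightarrow F_k$. The pairing $\langle\tilde g,\tpar\tilde f\rangle$ splits into a bounded number of contributions $\langle 2^kg_k,\tpar(2^jf_j)\rangle$, $(j,k)\in S\times T$. For each heavy pair — contributing at least a $c\rho^C$-fraction of $\apar|E_j|^{1/p}|F_k|^{1/p}$ — Proposition~\ref{prop:dualparaballs} produces a dual pair $(B_{jk},B^\star_{jk})$ with $|B_{jk}|\le|E_j|$, $|E_j\cap B_{jk}|\ge c\rho^C|E_j|$, and analogous bounds for $B^\star_{jk}$. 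Run an iterative greedy selection retaining a subfamily $\{B_\alpha\}_{\alpha\in\mathcal A}$ such that (a) any two retained paraballs obey $\varrho(B_\alpha,B_\beta)\ge M$ for a parameter $M\gg 1$ chosen later; (b) every discarded paraball is $\varrho$-close to a retained one and hence (by Lemma~\ref{lemma:ifparaballsintersect} together with Lemma~\ref{lemma:triangle}) its level index lies within a bounded distance — polynomial in $M$ — of some retained $j_\alpha$. This is possible because $|B_{jk}|\le|E_j|\sim 2^{-jp}$, so paraballs in one $\varrho$-cluster necessarily have measures (and hence associated level indices) in a bounded range.

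The final step invokes Lemma~\ref{lemma:ballsfarapart}. Pick $\eta\ll\rho^C$ and then $M\ge C\eta^{-C}$ large enough to satisfy its hypotheses. Applying the lemma with $F=\{g\ge c\rho^C\}$ yields a measurable partition $F=\bigsqcup_\alpha F_\alpha$ with $\langle\chi_{F_\beta},\tpar\chi_{B_\alpha}\rangle\le\eta|F|^{1/p}|B_\alpha|^{1/p}$ for $\alpha\ne\beta$. Combining this with the bound $\langle g\chi_{F_\alpha},\tpar(2^{j_\alpha}f_{j_\alpha}\chi_{B_\alpha})\rangle\le\|g\chi_{F_\alpha}\|_{q'}\apar\|2^{j_\alpha}f_{j_\alpha}\chi_{B_\alpha}\|_p$ and H\"older across $\alpha$ (using $\sum_\alpha\|g\chi_{F_\alpha}\|_{q'}^{q'}=\|g\|_{q'}^{q'}=1$) gives
\begin{equation*}
\|\tpar\tilde f\|_q\ \le\ \apar\Big(\sum_{\alpha\in\mathcal A}\|2^{j_\alpha}f_{j_\alpha}\chi_{B_\alpha}\|_p^q\Big)^{1/q}\ +\ O(\rho^c+\eta).
\end{equation*}
If $i_0$ and $j_0$ belong to different $\varrho$-clusters, which is forced by choosing $|i_0-j_0|$ larger than a polynomial in $M$, then at least two of the terms $\|2^{j_\alpha}f_{j_\alpha}\chi_{B_\alpha}\|_p$ exceed $c\rho^C$. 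Since $\sum_\alpha\|2^{j_\alpha}f_{j_\alpha}\chi_{B_\alpha}\|_p^p\le\|\tilde f\|_p^p\le 1$, the $p<q$ convexity gap yields $(\sum_\alpha a_\alpha^q)^{1/q}\le 1-c\rho^{C'}$. Hence $\|\tpar\tilde f\|_q\le(1-c\rho^{C'})\apar$, contradicting $(1-\delta')\apar\le\|\tpar\tilde f\|_q$ once $\delta\le c\rho^{C}$ with $C$ sufficiently large.

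The main obstacle is the iterative paraball extraction: one must simultaneously control the number of heavy $(j,k)$ pairs, the $\varrho$-separation of the retained paraballs, and the total captured mass, all quantitatively in powers of $\rho$. Establishing that paraballs discarded in the greedy thinning can be absorbed into their retained neighbors without losing more than an $O(\rho^c)$ fraction of the bilinear pairing is the delicate combinatorial point; this is exactly where Lemmas~\ref{lemma:ifparaballsintersect} and~\ref{lemma:triangle} are essential, converting $\varrho$-closeness of paraballs into usable geometric comparisons.
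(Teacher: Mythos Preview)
Your overall architecture---split into two groups, decouple, and invoke the strict $\ell^p\hookrightarrow\ell^q$ gap---matches the paper's, but you have chosen the wrong geometric tool and this creates a real gap.

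The paper does \emph{not} use Lemma~\ref{lemma:ballsfarapart} here; that lemma is reserved for the spatial localization step (Lemma~\ref{lemma:secondspatial}).  For the present lemma the key input is the more elementary Lemma~\ref{lemma:comparablemeasures}, which works directly with arbitrary sets $E,E'$ rather than with paraballs.  The paper partitions $S=S^\sharp\cup S^\flat$ across a gap of width $M/N$, then partitions each dual level set $F_k$ \emph{pointwise} into three pieces according to whether $\tpar\chi_{E_j}(x)$ exceeds a threshold for some $j\in S^\sharp$, for some $j\in S^\flat$, or for neither.  A counting argument forces a cross term $\langle h^\flat,\tpar f^\sharp\rangle\gtrsim\rho^C$, and one then locates a single set $\scriptf\subset F_k$ on which both $\tpar\chi_{E_i}$ and $\tpar\chi_{E_j}$ are pointwise large for some $i\in S^\flat$, $j\in S^\sharp$.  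Lemma~\ref{lemma:comparablemeasures} immediately gives $|E_i|\asymp_{\rho}|E_j|$, and since $2^{ip}|E_i|,2^{jp}|E_j|\in[\rho^p,1]$ this bounds $|i-j|$.

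Your route through Lemma~\ref{lemma:ballsfarapart} has a genuine hole at the decoupling step.  That lemma only bounds $\langle\chi_{F_\beta},\tpar\chi_{B_\alpha}\rangle$ for $\alpha\ne\beta$: the decoupling is for characteristic functions of the \emph{paraballs} themselves.  But your displayed inequality requires smallness of $\langle g\chi_{F_\beta},\tpar(2^jf_j)\rangle$ for $j$ in the cluster of $\alpha$, and $f_j$ is not supported in $B_\alpha$---Proposition~\ref{prop:dualparaballs} only puts a $c\rho^C$ fraction of $|E_j|$ there.  To pass from paraball-decoupling to level-set-decoupling you would first need to cover essentially all of each $E_j$ by boundedly many paraballs, which is exactly Lemma~\ref{lemma:firstspatial}---proved \emph{after} the present lemma in the paper's development.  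As written, your inequality
\[
\|\tpar\tilde f\|_q\le\apar\Big(\sum_\alpha\|2^{j_\alpha}f_{j_\alpha}\chi_{B_\alpha}\|_p^q\Big)^{1/q}+O(\rho^c+\eta)
\]
does not follow from the ingredients you have assembled: the right-hand side accounts only for the portions of the $f_j$ captured by single paraballs, not for all of $\tilde f$.
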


%Thus there exist functions which come arbitrarily close to extremizing
%\eqref{eq:paraboloid}, which belong to a strictly smaller Orlicz space than
%$L^{p }$, with uniformly bounded norms.

\begin{proof}
By Lemma~\ref{lemma:lorentz}
there exists a function $\tilde f$ with rough level set decomposition
$\tilde f = \sum_{j\in S} 2^j f_j$, $f_j\leftrightarrow E_j$,
with $|S|\le C\rho^{-C}$
such that $\norm{f-\tilde f}_{p}\le C\rho^c$,
and moreover
$\norm{2^j f_j}_{p}\ge\rho$ for every $j\in S$.

The operator dual to $\tpar$ is identical to $\tpar$ under conjugation
by a simple change of variables. Thus 
Lemma~\ref{lemma:lorentz} applies equally well to it.
Therefore by duality,
there exists $h\in L^{p}$ satisfying $\norm{h}_{p}=1$,
with rough level set decomposition $h = \sum_{k\in \tilde S} 2^k h_k$,
$h_k\leftrightarrow F_k$,
with index set 
$\tilde S\subset\integers$ satisfying $|\tilde S|\le C\rho^{-C}$,
so that 
\[\langle h,\tpar\tilde f\rangle\ge (1-2\delta)\apar.\]
Set $N=|S|+|\tilde S|\le C\rho^{-C}$.

Set \[M = \max_{i,j\in S} |i-j|.\]
%Set \begin{equation}
%M = \max_{i,j\in S} \frac{|E_i|}{|E_j|}.
%\end{equation}
It is possible to
partition $S$ into two nonempty disjoint sets $S=S^\sharp\cup S^\flat$
so that 
$|i-j|\ge M/N$
%$|E_i|/|E_j|\ge M^{1/N}$ 
whenever $i\in S^\sharp$ and $j\in S^\flat$.
Fix any such partition.

Let $\eta>0$ be a small parameter to be chosen below; it will depend
on $N$ and thereby ultimately on $\rho$.
Partition each of the sets $F_k$ as
$F_k=F_k^\sharp\cup F_k^\flat\cup F_k^\natural$
measurably, so that
\newline
(i) For each $x\in F_k^\sharp$ there exists $j\in S^\sharp$
such that $\tpar\chi_{E_j}(x)>\eta|F_k|^{-1+1/p}|E_j|^{1/p}$;
\newline
(ii) For each $x\in F_k^\flat$ there exists $j\in S^\flat$
such that $\tpar\chi_{E_j}(x)>
\eta|F_k|^{-1+1/p}|E_j|^{1/p}$;
\newline
(iii) For each $x\in F_k^\natural$, $\tpar\chi_{E_j}(x)
\le\eta|F_k|^{-1+1/p}|E_j|^{1/p}$ for every $j\in S$.

Set 
\begin{alignat*}{2}
&h^\flat = \sum_{k\in\tilde S} 2^k h_k\chi_{F_k^\flat},
\qquad&& h^\sharp= \sum_{k\in\tilde S} 2^k h_k\chi_{F_k^\sharp},
\\
&f^\flat = \sum_{j\in S^\flat} 2^j f_j,
\qquad&& f^\sharp= \sum_{j\in S^\sharp} 2^j f_j.
\end{alignat*}
For any $j\in S$ and $k\in\tilde S$,
\begin{align*}
\langle 2^k h_k\cdot \chi_{F_k^\natural},\tpar (2^j f_j)\rangle
&\le 2^{k+j+2}\langle \chi_{F_k^\natural},\tpar \chi_{E_j}\rangle
\\
&\le 2^{k+j+2}\eta|F_k^\natural|\cdot|F_k|^{-1+1/p}|E_j|^{1/p}
\\
&\le 2^{k+j+2}\eta|F_k|^{1/p}|E_j|^{1/p}
\\
&\le 4\eta\norm{h}_p\norm{f}_p
\\
&=4\eta.
\end{align*}

$\tilde f=f^\sharp+f^\flat$ where $f^\sharp,f^\flat$ have disjoint supports;
likewise $h= h^\sharp+h^\flat+h^\natural$, with  disjointly supported
summands. 
Consequently, as in the proof of Lemma~\ref{lemma:lorentz}, 
\[
 \langle h^\sharp,\tpar f^\sharp\rangle
+ \langle h^\flat,\tpar f^\flat\rangle
\le \apar(1-C\rho^c)\norm{f}_p\norm{h}_p
= \apar(1-C\rho^c)
\]
since $\norm{f^\sharp}_p\ge\rho$, 
$\norm{f^\flat}_p\ge\rho$,
$1=\norm{h}_p^p \ge \norm{h^\sharp}_p^p +\norm{h^\flat}_p^p$,
and
$1 = \norm{f}_p^p \ge \norm{\tilde f}_p^p = \norm{f^\sharp}_p^p +\norm{f^\flat}_p^p$.
Thus 
\begin{align*}
\langle h,\tpar \tilde f\rangle
&\le CN^2\eta 
+ \langle h^\sharp,\tpar f^\sharp\rangle
+ \langle h^\flat,\tpar f^\flat\rangle
+ \langle h^\flat,\tpar f^\sharp\rangle
+ \langle h^\sharp,\tpar f^\flat\rangle
\\
&\le CN^2\eta 
+ \apar(1-C\rho^c)
+ \langle h^\flat,\tpar f^\sharp\rangle
+ \langle h^\sharp,\tpar f^\flat\rangle.
\end{align*}

Set 
\begin{equation}
\eta =c_0 \rho^{C_0},
\end{equation}
where $c_0,C_0$ are respectively sufficiently small and sufficiently large
constants. In particular, choose $c_0,C_0$ so that
$CN^2\eta + (\apar-C\rho^c) < (1-3\delta)\apar $;
this is possible
since $N\le C\rho^{-C}$. 
Thus 
\begin{equation} \label{eq:antidiagonal} 
\langle h^\flat,\tpar f^\sharp\rangle\ge c\rho^C,
\end{equation}
or
\begin{equation*} \langle h^\sharp,\tpar f^\flat\rangle\ge c\rho^C.
\end{equation*}
It is no loss of generality to assume \eqref{eq:antidiagonal}, 
since the situation is symmetric with respect to interchange of the
indices $\sharp,\flat$.

There must exist $k\in\tilde S$ and $j\in S^\sharp$
such that 
\begin{equation}
\langle \chi_{F_k^\flat},\tpar \chi_{E_j}\rangle\ge \eta |F_k|^{1/p}|E_j|^{1/p}; 
\end{equation}
otherwise we would have
a total bound of $C\eta N^2$ for the left-hand side in 
\eqref{eq:antidiagonal}, which would be a contradiction
due to the choice of $\eta$.

Let 
\[F_k^{\flat,\dagger}=\{x\in F_k^\flat: T\chi_{E_j}(x)<\tfrac12\eta 
|F_k^\flat|^{-1}|F_k|^{1/p}|E_j|^{1/p}\}.\]
Then 
\[\langle \chi_{F_k^{\flat,\dagger}},T\chi_{E_j}\rangle
\le\tfrac12 \eta |F_k|^{1/p}|E_j|^{1/p},\]
so
\[\langle \chi_{F_k^\flat\setminus F_k^{\flat,\dagger}},T\chi_{E_j}\rangle
\ge\tfrac12 \eta |F_k|^{1/p}|E_j|^{1/p}.\]
Since 
\[ \langle \chi_{F_k^\flat\setminus F_k^{\flat,\dagger}},T\chi_{E_j}\rangle
\le \apar|F_k^\flat\setminus F_k^{\flat,\dagger}|^{1/p}|E_j|^{1/p},\]
we must have
\[|F_k^\flat\setminus F_k^{\flat,\dagger}|\ge c\rho^C|F_k|.\]
Thus 
\[T\chi_{E_j}(x)\ge c\rho^C |F_k|^{-1+1/p}|E_j|^{1/p}\]
for every $x\in F_k^\flat\setminus F_k^{\flat,\dagger}$.

There exist an index $i\in S^\flat$
and a subset $\scriptf\subset F_k^\flat\setminus F_k^{\flat,\dagger}$
such that $\tpar\chi_{E_i}(x)>\eta|F_k|^{-1+1/p}|E_i|^{1/p}$
for all $x\in\scriptf$
and 
\[\langle \chi_{\scriptf},\tpar\chi_{E_j}\rangle
\ge N^{-1}
\langle \chi_{F_k^\flat\setminus F_k^{\flat,\dagger}},\tpar\chi_{E_j}\rangle
\ge cN^{-1}\rho^C|F_k|^{1/p}|E_j|^{1/p}.\]
Since $\langle \chi_{\scriptf},\tpar\chi_{E_j}\rangle
\le\apar|\scriptf|^{1/p}|E_j|^{1/p}$,
it follows again that $|\scriptf|\ge c\rho^C|F_k|$.

Therefore 
\begin{align*}
&T\chi_{E_j}(x)\ge c\rho^C|\scriptf|^{-1+1/p}|E_j|^{1/p}
%&\langle\chi_{\scriptf},\tpar\chi_{E_i}\rangle 
%\ge c\rho^C|\scriptf|^{1/p}|E_i|^{1/p}
\\
&T\chi_{E_i}(x)\ge c\rho^C|\scriptf|^{-1+1/p}|E_i|^{1/p}
%&\langle\chi_{\scriptf},\tpar\chi_{E_j}\rangle 
%\ge c\rho^C|\scriptf|^{1/p}|E_j|^{1/p}.
\end{align*}
for every $x\in\scriptf$.
By Lemma~\ref{lemma:comparablemeasures},
this forces 
\[|E_i|\le C\rho^{-C}|E_j| \text{ and } |E_j|\le C\rho^{-C}|E_i|.\]
%Thus $M^{1/N}\le \rho^{-C}$.

Since $\norm{f_i}_p\ge\rho$,
$\rho^p\le 2^p 2^{pi}|E_i|$.
On the other hand, 
$2^{p(j+1)}|E_j|\le \norm{\tilde f}_p^p=1$.
Therefore \[|E_j|\le C2^{(i-j)p}\rho^{-p}|E_i|.\]
In conjunction with the reverse bound $|E_j|\ge c\rho^C|E_i|$
proved above, this forces $|i-j|\le C\log(1/\rho)$.

This conclusion holds for a certain pair $(i,j)\in S^\flat\times S^\sharp$.
The partition $S = S^\sharp\cup S^\flat$ was chosen so that
$M\le N|i-j|$. 
Therefore $M\le C\rho^{-C}$.
% ??? This is a terrible bound.
\end{proof}

\begin{remark}
This proof is on track to establish higher integrability in the natural form
$\phi^*f\in L^Q$ for some $Q>p $ for extremals, 
until the very last step, in which $|i-j|$ turns into $N|i-j|$.
Perhaps some more efficient reorganization is possible.
\end{remark}

\begin{corollary} \label{cor:Psi}
There exist a finite constant $C$ and a 
function $\Psi:(0,\infty)\to(0,\infty)$
satisfying 
% $\Psi(t)\ge t^p$ for all $t$, and 
$\Psi(t)/t^p\to\infty$ as $t\to\infty$
and as $t\to 0$,
with the following property.
For any $\eps>0$ there exists $\delta>0$ such that
for any nonnegative function $f$ satisfying
$\norm{f}_p=1$
and $\norm{\tpar f}_q\ge(1-\delta)\apar$,
there exist $\phi\in\scriptg_d$ and
a decomposition $\phi^*f=g+h$
such that $g,h\ge 0$ satisfy
$\norm{h}_p<\eps$
and 
\begin{equation}
\int \Psi(g)\le C.
\end{equation}
\end{corollary}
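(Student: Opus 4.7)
The plan is to let $h$ collect the levels of $f$ with small $L^p$ contribution and let $g$ be a dilated version of the remaining levels, which Lemma~\ref{lemma:step2} clusters into a bounded interval of indices; then to verify $\int\Psi(g)\le C$ using a pointwise polynomial decay estimate on the level masses extracted from Lemma~\ref{lemma:step2} applied at a continuum of thresholds.

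Given $\eps>0$, fix $\eta=\eta(\eps)>0$ small and $\delta=\delta(\eps)>0$ smaller still. Write the rough level-set decomposition $f=\sum_j 2^j f_j$, $f_j\leftrightarrow E_j$, and set $\mu_j=\|2^j f_j\|_p$. Partition the indices by the size of $\mu_j$:
\[
g_0=\sum_{j:\,\mu_j\ge\eta}2^j f_j,\qquad h_0=f-g_0.
\]
Lemma~\ref{lemma:lorentzthirdversion} gives $\|h_0\|_p\le C(\delta^{1/p}+\eta^c)<\eps$ for our choice of parameters. By Lemma~\ref{lemma:step2} applied at threshold $\eta$, the indices appearing in $g_0$ lie in an interval of length at most $K(\eta):=\tilde C\eta^{-\tilde C}$. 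Let $j^*$ be the index of the level of maximal mass, and choose the dilation $\phi\in\scriptg_d$ given by $\phi(x',x_d)=(2^{-j^*/d}x',\,2^{-2j^*/d}x_d)$, which relabels level $j$ of $f$ as level $j-j^*$ of $\phi^*f$ and preserves the level masses. Setting $g=\phi^*g_0$ and $h=\phi^*h_0$, we obtain $\phi^*f=g+h$ with $g,h\ge 0$, $\|h\|_p=\|h_0\|_p<\eps$, and the rough levels of $g$ contained in $[-K(\eta),K(\eta)]$.

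The principal step is a pointwise polynomial decay estimate for the level masses of $g$. For any threshold $\rho\in[\eta,1]$ with $\delta\le c\rho^C$, Lemma~\ref{lemma:step2} asserts that any two indices $i,k$ with $\mu_i,\mu_k\ge\rho$ satisfy $|i-k|\le K(\rho)=\tilde C\rho^{-\tilde C}$; in particular, $|j-j^*|>K(\rho)$ forces $\mu_j<\rho$. Solving for $\rho$ in terms of $|j-j^*|$ yields, after the dilation,
\[
\mu_j\le C(1+|j|)^{-1/\tilde C}\quad\text{for every }j\in\integers.
\]
Now define
\[
\Psi(t):=t^p\bigl[\log(2+t)\log(2+t^{-1})\bigr]^\alpha
\]
for a small $\alpha>0$, so $\Psi(t)/t^p\to\infty$ as $t\to 0^+$ and as $t\to\infty$. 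Since $1\le g_j<2$ on each level set,
\[
\int\Psi(g)\le C\sum_j\mu_j^p\bigl[\log(2+|j|)\bigr]^\alpha\le C\sum_j(1+|j|)^{-p/\tilde C}\bigl[\log(2+|j|)\bigr]^\alpha,
\]
which is finite, and independent of $\eps$, provided $0<\alpha<p/\tilde C-1$.

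The main obstacle is this final summability, which requires $p/\tilde C>1$, i.e., $\tilde C<(d+1)/d$. If the constant $\tilde C$ produced by Lemma~\ref{lemma:step2} does not satisfy this condition, one must either sharpen the lemma along the lines of the Remark following its proof---an improvement $|i-j|\le C\log(1/\rho)$ would yield exponential decay $\mu_j\le Ce^{-c|j|}$ and correspondingly stronger integrability---or argue more delicately by iterating Lemma~\ref{lemma:lorentzthirdversion} at a geometric cascade of thresholds to absorb successive tails into $h$ while keeping $\|h\|_p<\eps$.
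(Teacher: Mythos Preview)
Your overall architecture matches the paper's: split off the small-mass levels using Lemma~\ref{lemma:lorentzthirdversion}, dilate to center the remaining levels near index $0$, and use Lemma~\ref{lemma:step2} to bound the spread of indices. The difficulty you flag at the end is genuine, however, and is not merely technical. Your summability step relies solely on the pointwise estimate $\mu_j\le C(1+|j|)^{-1/\tilde C}$, which gives $\sum_j \mu_j^p\cdot(1+|j|)^\alpha$ finite only if $\tilde C<p$; nothing in the paper guarantees this, and the Remark after Lemma~\ref{lemma:step2} explicitly notes that the argument loses a factor of $N$ at the last step, so one should not expect $\tilde C$ to be small. (Incidentally, your displayed bound $\sum_j\mu_j^p[\log(2+|j|)]^\alpha$ is miscalculated: for your $\Psi$ one has $\Psi(2^j)/2^{jp}\asymp(1+|j|)^\alpha$, not $[\log(2+|j|)]^\alpha$, so the summability constraint is exactly $\alpha<p/\tilde C-1$.)

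The missing idea is a second application of Lemma~\ref{lemma:lorentzthirdversion}, this time inside the estimation of $\int\Psi(g)$ rather than merely to form $h$. The paper groups the levels of $g$ dyadically in the mass variable, setting $S_k=\{j:\mu_j\in(2^{-k-1},2^{-k}]\}$. Lemma~\ref{lemma:step2} then gives $|j|\le C2^{Ck}$ for $j\in S_k$, while Lemma~\ref{lemma:lorentzthirdversion} gives the crucial aggregate bound $\sum_{j\in S_k}\mu_j^p\le C\delta+C2^{-ck}$. Combining these,
\[
\sum_{j\in S_k}\Psi(2^{j+1})|E_j|
\le \Big(\max_{|j|\le C2^{Ck}}\frac{\Psi(2^{j+1})}{2^{p(j+1)}}\Big)\cdot(C\delta+C2^{-ck}),
\]
and it then suffices to choose $\Psi$ with $\Psi(t)/t^p\to\infty$ slowly enough that $\sum_k 2^{-ck}\max_{|j|\le C2^{Ck}}\Psi(2^{j+1})/2^{p(j+1)}<\infty$; this is possible for \emph{any} values of $c,C,\tilde C$. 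The pointwise decay of $\mu_j$ alone, which is all you extract, is too weak; the $\ell^p$ smallness of the tails $\{j:\mu_j\le 2^{-k}\}$ is what carries the argument.
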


\begin{proof}
Fix any exponent $r\in(p,q)$.
Let $\eta>0$  be a small quantity to be chosen at the end
of the proof. Let $\delta$ be a sufficiently small function of $\eta,\eps$.

Let $f$ have a rough level set decomposition $f\equiv \sum_{j\in\integers}2^j f_j$,
$f_j\leftrightarrow E_j$. 
By Lemma~\ref{lemma:lorentzthirdversion}, if $\delta>0$ is sufficiently small then
there exists at least one index $k$ such that
$\norm{2^kf_k}_p\ge c_0$,
where $c_0>0$ is an absolute constant which depends only on the dimension $d$. 
By choosing $\phi\in\scriptg_d$ to be an appropriate dilation symmetry
$\phi(x',x_d) = (rx',r^2x_d)$
we may reduce to the case where $k=0$.

By Lemma~\ref{lemma:levelsfarapart},
there exists $M<\infty$ such that
$\norm{2^j f_j}_p<\eta$ 
whenever $|j|\ge M$.
Define $h=\sum_{|j|>M} 2^j f_j$,
and
$g=\sum_{|j|\le M} 2^j f_j$.
Then
$\norm{h}_{L^{p,r}}^r
= \sum_{|j|>M}\norm{2^jf_j}_p^r
\le \eta^{r-p}
\sum_{|j|>M}\norm{2^jf_j}_p^p
\le \eta^{r-p}$.
By the proof of Lemma~\ref{lemma:lorentzsecondversion},
this implies that $\norm{h}_p<\eps$
provided that $\delta,\eta$ are chosen to be
sufficiently small.

If $\eta,\delta$ are chosen to be sufficiently small
then by Lemma~\ref{lemma:levelsfarapart},
for any $\rho\ge\eta$,
$|j|\le C\rho^{-C}$ whenever $\norm{2^jf_j}_p\ge\rho$.
Since the sets $E_j$ are pairwise disjoint,
for any nondecreasing function $\Psi$,
$\int \Psi(g) = \sum_{|j|\le M} \int \Psi(2^jf_j)
\le \sum_{|j|\le M} \Psi(2^{j+1})|E_j|$.

Let $S_k=\{j: |j|\le M \text{ and } \norm{2^jf_j}_p\in (2^{-k-1},2^{-k}]\}$.
Then for all $j\in S_k$, $|j|\le C2^{Ck}$ by Lemma~\ref{lemma:levelsfarapart}.
Therefore
\[
\sum_{j\in S_k} \Psi(2^{j+1})|E_j|
\le 
2^p \max_{|j|\le C2^{Ck}} \frac{\Psi(2^{j+1})}{2^{p(j+1)}} 
\cdot \sum_{j\in S_k} \norm{2^j f_j}_p^p.
\]
Moreover,
\[
\sum_{j\in S_k} \norm{2^j f_j}_p^p
\le C\delta+C2^{-ck}
\]
by Lemma~\ref{lemma:lorentzthirdversion}.
Thus
\[
\sum_{2^k\le\eta^{-1}}
\sum_{j\in S_k} \Psi(2^{j+1})|E_j|
\le 
\sum_{2^k\le\eta^{-1}}
C
(\delta+2^{-ck})
\max_{|j|\le C2^{Ck}} 
\Big(\frac{\Psi(2^{j+1})}{2^{p(j+1)}}\Big).
\]
Choose $\Psi$ 
to be a nondecreasing function satisfying the growth condition
\[
\sum_{k=0}^\infty 2^{-ck}
\max_{|j|\le C2^{Ck}} \frac{\Psi(2^{j+1})}{2^{p(j+1)}} 
<\infty,
\]
and
$\Psi(t)/t^p\to\infty$
as $t\to 0$ and as $t\to\infty$.
Then choose $\delta$ to be a function of $\eta$, 
satisfying
\[
\delta \sum_{2^k\le\eta^{-1}}
\max_{|j|\le C2^{Ck}} \frac{\Psi(2^{j+1})}{2^{p(j+1)}} 
\le 1.
\]
This completes the proof.
\end{proof}

\section{Step 3: Spatial localization}

Lemma~\ref{lemma:mainQEpaper}
leads to the following preliminary result concerning
the geometric structure of quasiextremals.
%This lemma does still allow decompositions which are problematic in the
%sense discussed above, but 
%controls the number and geometry of components in such decompositions.

\begin{lemma} \label{lemma:firstspatial}
For any $\eps>0$ there exist $\delta>0$ and $N,K<\infty$
with the following property.
Let $f\ge 0$ be any $(1-\delta)$-quasiextremal for
inequality \eqref{eq:paraboloid} satisfying $\norm{f}_{p }=1$.
Then there exists a function $F$ 
with a rough level set decomposition
$F = \sum_{j\in S} 2^j F_j$, $F_j\leftrightarrow E_j$,
satisfying
\begin{gather}
%\norm{F}_{p }\ge 1-\eps,
0\le F\le f
\notag
\\
\norm{\tpar F}_{q }\ge (1-\eps)\apar,
\notag
\\
|i-j|\le K \text{ for all $i,j\in S$,}
\notag
\\
\intertext{
and for each $j\in S$ there exist 
$N$ paraballs $B_{j,i}$ such that}
E_j\subset \cup_{i=1}^N B_{j,i}
\label{eq:pb1}
\\
\sum_i|B_{j,i}|\le C(\eps)|E_j|.
\label{eq:pb2}
\end{gather}
\end{lemma}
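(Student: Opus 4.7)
The plan is to combine the near-level truncation from Step~2 with an iterated extraction of paraballs furnished by Lemma~\ref{lemma:mainQEpaper}.

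First, I would apply Lemma~\ref{lemma:lorentzsecondversion} and Lemma~\ref{lemma:levelsfarapart} to produce a function $\tilde f\le f$ with rough level set decomposition $\tilde f=\sum_{j\in S} 2^j f_j$, $f_j\leftrightarrow E_j$, where $S\subset\integers$ has cardinality and diameter both bounded by some $K=K(\eps)$, and $\|\tpar \tilde f\|_q\ge(1-\eps/3)\apar$. After discarding any $j\in S$ with $\|2^j f_j\|_p<c(\eps)$, at a cost of at most $\eps/6$ in $\|\tpar\tilde f\|_q$, the surviving levels satisfy $c(\eps)\,2^{-jp}\le |E_j|\le 2^{-jp}$. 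Since the diameter of $S$ is bounded by $K(\eps)$, all the measures $|E_j|$ for $j\in S$ are comparable up to a factor depending only on $\eps$.

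Second, I would run the following stopping-time extraction. Set $R^{(0)}=\tilde f$ and begin with an empty list of paraballs. At stage $k\ge 0$, if $\|\tpar R^{(k)}\|_q<\eps\apar/3$, halt; otherwise, since $\|R^{(k)}\|_p\le\|\tilde f\|_p\le 1$, the function $R^{(k)}$ is an $(\eps\apar/3)$-quasiextremal, so Lemma~\ref{lemma:mainQEpaper} yields an index $j_k\in S$ and a paraball $B_k$ with $|B_k|\le |E_{j_k}|$ and $\|2^{j_k} f_{j_k}\chi_{B_k\cap E^{(k)}_{j_k}}\|_p\ge c\eps^C\|R^{(k)}\|_p$, where $E^{(k)}_j$ denotes the current level support in $R^{(k)}$. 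Append $B_k$ to the list at level $j_k$ and define $R^{(k+1)}=R^{(k)}-2^{j_k}f_{j_k}\chi_{B_k\cap E^{(k)}_{j_k}}$, so that the pieces removed at successive stages have pairwise disjoint supports. Since $\|R^{(k)}\|_p\ge\|\tpar R^{(k)}\|_q/\apar\ge\eps/3$, each removed piece has $L^p$-norm at least $c'\eps^{C'}$, and hence the procedure terminates in some number of steps $N\le C(\eps)$, bounded since the total $L^p$-norm of the disjointly removed pieces is at most $\|\tilde f\|_p\le 1$.

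Let $F=\tilde f-R^{(N)}$. Then $F$ admits the rough level set decomposition $F=\sum_{j\in S} 2^j F_j$ with $F_j=f_j\chi_{E_j^*}$, where $E_j^*=E_j\cap\bigcup_{k:\,j_k=j} B_k$, so the containment \eqref{eq:pb1} is automatic. The triangle inequality gives $\|\tpar F\|_q\ge \|\tpar\tilde f\|_q-\|\tpar R^{(N)}\|_q\ge(1-\eps)\apar$. For the size bound \eqref{eq:pb2}, the first paraball assigned to level $j$ contributes $|E_j\cap B|\ge c(\eps)\,2^{-jp}\ge c'(\eps)|E_j|$, so $|E_j^*|\ge c'(\eps)|E_j|$; combined with $|B_{j,i}|\le|E_j|$ and $N\le C(\eps)$, this yields $\sum_i|B_{j,i}|\le N|E_j|\le C''(\eps)|E_j^*|$.

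The main obstacle is organizational rather than analytic: one must arrange the iteration so that the pieces extracted at successive stages remain disjoint, that the index $j_k$ produced by Lemma~\ref{lemma:mainQEpaper} always lies in the preselected $S$ (automatic since $R^{(k)}$ inherits its level decomposition from $\tilde f$), and that the uniform lower bound on each extracted $L^p$-mass persists across all stages. The diameter control on $S$ furnished by Step~2 is what ultimately converts the crude estimate $N\cdot\max_j|E_j|$ into the sharp form $C(\eps)|E_j^*|$ demanded by \eqref{eq:pb2}.
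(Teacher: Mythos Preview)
Your argument is correct and follows essentially the same route as the paper: reduce to a function with finitely many levels of bounded diameter, then iterate Lemma~\ref{lemma:mainQEpaper} until the remainder has small $\|\tpar\cdot\|_q$, counting steps via the disjoint $L^p$ mass removed.

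There are two minor organizational differences worth noting. First, your preliminary discarding of levels with $\|2^jf_j\|_p<c(\eps)$ is harmless but unnecessary: the upper bound $|E_j|\le 2^{-jp}$ (from $\|\tilde f\|_p\le 1$) is automatic, and the lower bound on $|E_j^*|$ that you actually use comes from the extraction step, not from preprocessing. Second, at each stage you remove only the single level $2^{j_k}f_{j_k}\chi_{B_k\cap E^{(k)}_{j_k}}$ from $R^{(k)}$, whereas the paper removes all of $f$ on $B_k$ (setting $h_{n-1}=f\cdot\chi_{\reals^d\setminus\cup_{m\le n-1}B_m}$). Your version is in fact cleaner, since it makes the identity $F=\tilde f-R^{(N)}$ exact and the level-set bookkeeping transparent; the paper's write-up has a small inconsistency between its description of $h_1$ and its general formula for $h_{n-1}$ that your formulation avoids.
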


\begin{proof}
In light of results already proved,
by modifying $f$ by a function whose $L^p$ norm is $<\eps/2$,
we may suppose that $f$
has a finite rough level set decomposition
$f=\sum_{j\in S} 2^j f_j$, $f_j\leftrightarrow\tilde E_j$, 
with $|i-j|\le K(\eps)$ for all $i,j\in S$.
Let $\eta>0$ be a small quantity. Then $\norm{\tpar f}_q
\ge (1-\delta)\apar\ge \eta$, so we may apply Lemma~\ref{lemma:mainQEpaper}
to find a paraball $B$ and an index $i_1$ such that
$\norm{2^{i_1} f_{i_1}\chi_B}_p\ge \rho$ 
and $|B|\le |\tilde E_{i_1}|$,
where $\rho>0$ depends only on $\eta$.
Set $g_1 = 2^{i_1}f_{i_1}\chi_B$ and write $f = g_1 + h_1$.

This was step $1$ of a construction which we iterate,
as follows. At step $n$, one is given 
a collection of paraballs $\{B_m: m\le n-1\}$, along with
a decomposition
\[
f = \sum_{m=1}^{n-1} g_m
+ h_{n-1} 
\]
such that
the sets $\tilde B_m = B_m\setminus\cup_{l<m} B_l$ 
and functions $g_m,h_{n-1}$ satisfy
\begin{gather*}
 g_m=2^{i_m}f_{i_m}\chi_{\tilde B_m} \text{ for some $i_m\in\integers$,} 
\\
|B_m|\le |\tilde E_{i_m}|,
\\
h_{n-1} = f\cdot\chi_{\reals^d\setminus\cup_{m\le n-1} B_m}.
\end{gather*}
Since $0\le h_{n-1}\le f$,
$\norm{h_{n-1}}_p\le \norm{f}_p\le 1$.
If $\norm{\tpar h_{n-1}}_q<\eta$ then the construction terminates.
Otherwise invoke Lemma~\ref{lemma:mainQEpaper} to find a paraball $B_n$ 
and an index $i_n$ 
such that the function
\[
g_n=2^{i_n}f_{i_n}\cdot\chi_{\reals^d\setminus\cup_{m<n} B_m}\cdot \chi_{B_n}
\]
satisfies
$\norm{g_n}_p\ge\rho$
and
$|B_n|\le |\tilde E_{i_n}|$.

The functions $g_n$ are nonnegative and have pairwise disjoint supports, 
and $\sum_{m=1}^n g_m\le f$. Consequently
$\sum_{m=1}^n \norm{g_m}_p^p\le \norm{f}_p^p=1$,
so this process must terminate after at most $\rho^{-p}$ iterations.
If it terminates at the $n$-th step, then
set $F = \sum_{m=1}^{n-1}g_m$. 
Then $\norm{\tpar(f-F)}_q\le\eta$, so
by Lemma~\ref{lemma:lorentzsecondversion}, $\norm{f-F}_p<\tfrac12\eps$
provided that $\eta$ is chosen to be sufficiently small.
Defining the collection $\{B_{j,i}\}$ to be $\{\tilde B_m: i_m=j\}$ for each
index $j$ produces a collection of paraballs satisfying \eqref{eq:pb1},\eqref{eq:pb2}.
\end{proof}

Consider momentarily the possibility of a sequence $\{f_\nu\}$
of quasiextremals which are characteristic functions of sets
$E_\nu$ satisfying $|E_\nu|=1$. 
If these sets were to move off to spatial infinity as $\nu\to\infty$,
then $f_\nu$ and $f_\nu^p$ would converge weakly to zero, preventing the extraction
of an extremal as a limit of some subsequence.
If each $E_\nu$ were a paraball, 
then this situation could be rectified by invoking
the symmetry group ${\scriptg}_d$ to replace each $E_\nu$ by a 
a paraball independent of $\nu$.
However, there is potentially a more problematic obstruction:
If each $E_\nu$ were a disjoint union $E_\nu=E'_\nu\cup E''_\nu$,
with $|E'_\nu|=|E''_\nu|=\tfrac12$ 
and with $E'_\nu,E''_\nu$ moving to infinity in different directions, 
then the symmetries would not suffice to produce a useful renormalized sequence.
The following refinement of Lemma~\ref{lemma:firstspatial} 
rules out this sort of obstruction.

\begin{lemma} \label{lemma:secondspatial}
For any $\eps>0$ there exist $\delta>0$ and $K,\lambda<\infty$
with the following properties.
Let $f\ge 0$ be any $(1-\delta)$-quasiextremal for
inequality \eqref{eq:paraboloid} satisfying $\norm{f}_{p }=1$.
Then there exist a function $\tilde f$ and a paraball $B$
such that 
\begin{gather*}
0\le\tilde f\le f,
\\
\norm{\tilde f}_{p }\ge 1-\eps,
\\
\norm{T\tilde f}_{q }\ge (1-\eps)\apar,
\intertext{and $\tilde f$ admits a rough level set decomposition
$\tilde f = \sum_{j\in S} 2^j f_j$ with a distinguished index $J\in\integers$ satisfying}
|j-J|\le K \text{ for all } j\in S,
\\
%\newline $\chi_{E_j}\le f_j<2\chi_{E_j}$ for each $j\in S$,
f_j \text{ is supported in } \lambda B \text{ for all $j\in S$,}
\\
2^J|B|^{1/p}\le C\norm{f}_p.
\end{gather*}
\end{lemma}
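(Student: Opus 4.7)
The plan is to refine the output of Lemma~\ref{lemma:firstspatial} by showing that among the finitely many paraballs witnessing the structure of an $\eps'$-quasiextremal, essentially only one spatial cluster contributes substantially. Let $\eps'>0$ be a tolerance to be chosen as a function of $\eps$, and let $\eta>0$ be still smaller. First I apply Lemma~\ref{lemma:firstspatial} to $f$ to obtain $F$ with $0\le F\le f$, $\norm{TF}_q\ge(1-\eps')\apar$, a rough level set decomposition $F=\sum_{j\in S}2^jF_j$ with $F_j\leftrightarrow E_j$ and $|i-j|\le K$, and for each $j$ a family of $\le N$ paraballs $B_{j,i}$ with $E_j\subset\cup_iB_{j,i}$ and $\sum_i|B_{j,i}|\le C(\eps')|E_j|$. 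Let $\Sigma$ denote the full collection; $|\Sigma|\le M=M(\eps')$. By the dual symmetry of $T$ under a change of variables, I similarly extract a test function $g\ge 0$ with $\norm{g}_p=1$, $\langle g,TF\rangle\ge(1-\eps')\apar$, and, via Lemma~\ref{lemma:lorentz}, a rough level set decomposition with bounded-cardinality index set.

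Next I cluster $\Sigma$ into classes $\Sigma=\Sigma_1\sqcup\cdots\sqcup\Sigma_L$ by greedily merging any two paraballs at $\varrho$-distance $\le T_0$ until no such pair remains; afterwards every cross-cluster pair satisfies $\varrho>T_0$, while iterated application of the quasi-triangle inequality (Lemma~\ref{lemma:triangle}) yields a within-cluster bound $\varrho\le\Lambda(T_0,M)$. By the definition of $\varrho$, this within-cluster bound implies that every paraball in $\Sigma_l$ lies in an expanded paraball $\lambda_0 B^{(l)}$, where $B^{(l)}\in\Sigma_l$ is any fixed representative and $\lambda_0=\lambda_0(\Lambda)$. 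Assigning each point of $E_j$ to the cluster of the first $B_{j,i}$ that contains it gives a measurable partition $E_j=\sqcup_l E_j^{(l)}$; setting $F^{(l)}=\sum_{j\in S}2^jF_j\chi_{E_j^{(l)}}$ yields $F=\sum_l F^{(l)}$ with pairwise disjoint supports and $\operatorname{supp}(F^{(l)})\subset\lambda_0 B^{(l)}$.

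Choose $T_0$ large enough that $\{\lambda_0 B^{(l)}\}_{l=1}^L$ remains pairwise $\varrho$-separated by the threshold required by Lemma~\ref{lemma:ballsfarapart}, and apply that lemma to each level set of $g$ separately: for each rough level set $F_k$ of $g$ one obtains a partition $F_k=\sqcup_l F_k^{(l)}$ with $\langle\chi_{F_k^{(l')}},T\chi_{\lambda_0 B^{(l)}}\rangle\le\eta|F_k|^{1/p}|\lambda_0 B^{(l)}|^{1/p}$ for $l\ne l'$; define $g^{(l)}=g\cdot\chi_{\cup_k F_k^{(l)}}$. Using the pointwise bound $F^{(l)}\le 2^{J+K+1}\chi_{\lambda_0 B^{(l)}}$ (with $J$ the central index of $S$), the bound $2^J|\lambda_0 B^{(l)}|^{1/p}\le C$, and the bounded-index decomposition of $g$ (so that $\sum_k 2^k|F_k|^{1/p}$ is controlled by $\norm{g}_p$ times a factor depending only on the number of levels), each off-diagonal term satisfies $\langle g^{(l')},TF^{(l)}\rangle\le C\eta$, whence
\[
(1-\eps')\apar-CL^2\eta\ \le\ \sum_l\langle g^{(l)},TF^{(l)}\rangle\ \le\ \apar\sum_l\norm{g^{(l)}}_p\norm{F^{(l)}}_p.
\]
Since $\sum_l\norm{g^{(l)}}_p^p,\sum_l\norm{F^{(l)}}_p^p\le 1$, Hölder's inequality combined with the finite-sequence embedding $\ell^p\hookrightarrow\ell^{p'}$ forces both sequences to concentrate on a single common index $l^\star$, yielding $\norm{F^{(l^\star)}}_p\ge 1-\eps$ once $\eps'$ and $\eta$ are chosen small enough.

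Setting $\tilde f:=F^{(l^\star)}$, $B:=B^{(l^\star)}$, and $\lambda:=\lambda_0$ delivers the stated conclusions: the support of $\tilde f$ lies in $\lambda B$ by construction, the index bound $|j-J|\le K$ is inherited from Lemma~\ref{lemma:firstspatial}, the lower bound $\norm{T\tilde f}_q\ge\norm{TF}_q-\apar\norm{F-\tilde f}_p\ge(1-\eps)\apar$ follows from $\norm{F-\tilde f}_p=O(\eps^{1/p})$, and the normalization $2^J|B|^{1/p}\le C\norm{f}_p$ follows from $|B|\le|E_j|$ (a property furnished by Lemma~\ref{lemma:mainQEpaper} inside Lemma~\ref{lemma:firstspatial}) together with $2^j|E_j|^{1/p}\le\norm{F}_p\le 1$ and $|j-J|\le K$. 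The principal difficulty is the bookkeeping of quantifiers: $\Lambda$ depends on $T_0$ and $M$, $M$ depends on $\eps'$, $\lambda_0$ depends on $\Lambda$, and $T_0$ must dominate the separation constant of Lemma~\ref{lemma:ballsfarapart} applied after dilation by $\lambda_0$; one must also ensure $L^2\eta$ is negligible relative to $\eps'$. Once these dependencies are untangled, every remaining step is a routine application of the results quoted above.
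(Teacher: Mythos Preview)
Your overall strategy is the one the paper intends: start from the finitely many paraballs produced by Lemma~\ref{lemma:firstspatial}, pair this with a similarly structured dual function, and use Lemma~\ref{lemma:ballsfarapart} to rule out the possibility that the paraballs split into two (or more) far-apart groups, exactly as Lemma~\ref{lemma:comparablemeasures} was used in the proof of Lemma~\ref{lemma:step2}. The diagonal/off-diagonal bookkeeping and the $\ell^p$ concentration step are also correct.

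There is, however, a genuine circularity in your quantifier management which you flag but do not resolve. You cluster at threshold $T_0$, obtain a within-cluster bound $\Lambda(T_0,M)$ via iterated quasi-triangle, pass to expanded representatives $\lambda_0 B^{(l)}$ with $\lambda_0=\lambda_0(\Lambda)$, and then require the expanded representatives to remain $C\eta^{-C}$--separated for Lemma~\ref{lemma:ballsfarapart}. But $\lambda_0$ grows at least like a fixed positive power of $T_0$ (indeed $\Lambda\lesssim T_0^{C_0^{M}}$ and $\lambda_0\lesssim\Lambda^{C}$), while expansion by $\lambda_0$ can shrink $\varrho$: terms 4, 5, 8, 9 in Definition~\ref{defn:varrho} scale like $\lambda_0^{-2}$ or $\lambda_0^{-1}$. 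For instance, two unit paraballs that are pure vertical translates by $N$ have $\varrho\asymp N$ coming entirely from terms 8 and 9, and after expansion by $\lambda_0>N$ one gets $\varrho\asymp 1$. Thus there is no choice of $T_0$ making $\{\lambda_0 B^{(l)}\}$ separated by $C\eta^{-C}$ in general.

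The fix is to avoid expanding before invoking Lemma~\ref{lemma:ballsfarapart}. Following the template of the proof of Lemma~\ref{lemma:step2} more literally, define for each dual level set $F_k$ the pieces $F_k^{(l)}\subset\bigcup_{\alpha\in\Sigma_l}\{x:T\chi_{B_\alpha}(x)>\gamma_\alpha\}$ using the \emph{original} paraballs $B_\alpha$, not the expanded representatives. If some cross term $\langle\chi_{F_k^{(l')}},T\chi_{B_\alpha}\rangle$ with $\alpha\in\Sigma_l$, $l\ne l'$, were large, then the argument inside the proof of Lemma~\ref{lemma:ballsfarapart} produces $\beta\in\Sigma_{l'}$ with $\varrho(B_\alpha,B_\beta)\le C\eta^{-C}$, contradicting the cross-cluster separation $>T_0$ once $T_0\ge C\eta^{-C}$. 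Now $T_0$ depends only on $\eta$ (and $M$), $\Lambda$ and $\lambda_0$ are determined afterwards, and the final $\lambda:=\lambda_0$ is a legitimate constant depending only on $\eps$. Alternatively, a pigeonhole over a geometric ladder of thresholds $T_1<\cdots<T_{M+1}$ would locate a scale at which the clustering is stable, also breaking the circularity.
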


This improves upon
Lemma~\ref{lemma:firstspatial}
in that the collection of paraballs
$B_{j,i}$ in the conclusion has been replaced by a single
paraball, which however must be expanded by the factor $\lambda$.
This is a geometric analogue of the replacement of
an upper bound on the cardinality of the index set $S$
in Lemma~\ref{lemma:lorentzsecondversion} 
by an upper bound on its diameter in Lemma~\ref{lemma:step2}.
Lemma~\ref{lemma:secondspatial} follows directly from the combination of
Lemmas~\ref{lemma:ballsfarapart} and \ref{lemma:firstspatial},
in the same way that Lemma~\ref{lemma:step2}
followed from Lemma~\ref{lemma:lorentzsecondversion} 
combined with Lemma~\ref{lemma:comparablemeasures}.
Details are left to the reader.

% ??  
%{\bf  But more explicit calculations of $T(\chi_B)$ should be all that we need.}

\section{Weak Convergence and Extremizers}

Let $f$ be 
a $(1-\delta)$-quasiextremal 
satisfying $\norm{f}_{p }=1$
for $\delta$ sufficiently small.
Apply Lemma~\ref{lemma:secondspatial}.
Then by replacing $f$
by $\phi^*f$ for an appropriately chosen $\phi\in\scriptg_d$,
we may reduce to the case where in the conclusions of 
that lemma,
$J=0$ and the paraball $B$ is 
$B=\{x\in\reals^d: |x_j|\le 1
\text{ for all } 1\le j\le d-1
\text{ and } |x_d-|x'|^2|<1\}$.
Thus we have the following  information.

\begin{lemma} \label{lemma:summary}
There exist a constant $C_0<\infty$ and 
functions $\Psi:[0,\infty)\to[0,\infty)$ and $\rho:[1,\infty)\to(0,\infty)$
satisfying $\Psi(t)\ge t^{p }$ for all $t$,
$\Psi(t)/t^{p }\to\infty$ as $t\to 0$ and as $t\to\infty$, 
and $\rho(R)\to 0$ as $R\to\infty$,
such that
for any $\eps>0$ there exists $\delta>0$ with the following property.
Let $f$ be any nonnegative function satisfying 
$\norm{f}_{p }=1$
and
$\norm{\tpar f}_{q }\ge (1-\delta)\apar$.
Then there exist $\phi\in\scriptg_d$ and a decomposition
\[\phi^* f = g+h\]
with $g,h\ge 0$ satisfying
\begin{gather*}
\norm{h}_{p }<\eps,\qquad
\int_{\reals^d} \Psi(g)\le C_0,\qquad
\int_{|x|\ge R}g^p\le \rho(R).
\end{gather*}
Moreover, there exists a nonnegative function
$F$ satisfying
$\norm{F}_{p }=1$ and
\begin{gather*}
\langle F,\tpar g\rangle\ge (1-\eps)\apar,
\qquad
\int_{\reals^d} \Psi(F)\le C_0,
\qquad
\int_{|x|\ge R}F^p\le \rho(R).
\end{gather*}
\end{lemma}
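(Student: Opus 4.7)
The plan is to assemble this summary from the two main results already in hand: Lemma~\ref{lemma:secondspatial} supplies the spatial localization of a near-maximizer of $\tpar f$ on a single paraball, and Corollary~\ref{cor:Psi} supplies the universal higher integrability. The function $F$ will then be produced by duality, using the fact that $\tpar^*$ is conjugate to $\tpar$ under the involution $\tau(x)=-x$, so that all prior machinery applies verbatim on the dual side.

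First, apply Lemma~\ref{lemma:secondspatial} with parameter $\eps/4$ to obtain a paraball $B_f$, a distinguished index $J$, and $\tilde f\le f$ supported in $\lambda B_f$ with level set indices in $[J-K,J+K]$, $\|\tilde f\|_p\ge 1-\eps/4$, and $\|\tpar \tilde f\|_q\ge (1-\eps/4)\apar$. Using the transitive action of $\scriptg_d$ on paraballs (Lemma in \S\ref{section:preliminaries}), choose $\phi_0\in\scriptg_d$ taking the standard paraball $B^0$ to $B_f$, composed with a dilation that sends $J$ to $0$. Set $\phi=\phi_0^{-1}$, $g=\phi^*\tilde f$ and $h=\phi^*f-g$. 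Then $\|h\|_p<\eps/4$, and $g$ has rough level set decomposition with indices in $[-K,K]$ and supports contained in $\lambda B^0$. The universal bound $\int\Psi(g)\le C_0$ now follows by repeating the argument of Corollary~\ref{cor:Psi}, whose choice of $\Psi$ was made independently of $\eps$; and one obtains the uniform tail estimate $\int_{|x|\ge R}g^p\le \rho(R)$ by running Lemma~\ref{lemma:secondspatial} along a geometric sequence of parameters $\eps_k=2^{-k}$ and arranging the resulting approximations in a single telescoping decomposition so that the $L^p$-mass beyond radius $\lambda(\eps_k)$ is at most $\eps_k^p$. This determines $\rho(R)$ (essentially the inverse of $\eps\mapsto\lambda(\eps)$) in a way depending only on $d$.

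To construct $F$, set
\[
F=\|\tpar g\|_q^{-(q-1)}(\tpar g)^{q-1}.
\]
Since $q'=p$, the extremal choice in H\"older's inequality gives $\|F\|_p=1$ and $\langle F,\tpar g\rangle=\|\tpar g\|_q$. Because $\|\tpar g\|_q\ge \|\tpar \phi^*f\|_q-\apar\|h\|_p\ge (1-\delta-\eps/4)\apar$, the lower bound $\langle F,\tpar g\rangle\ge(1-\eps)\apar$ holds whenever $\delta$ is sufficiently small. Rewriting $\langle F,\tpar g\rangle=\langle \tpar^*F,g\rangle$ and using $\|g\|_p\le 1$, one sees that $F$ is a $(1-O(\eps))$-quasiextremal for $\tpar^*$. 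Since $\tpar^*$ is conjugate to $\tpar$ under $\tau$, all the preceding lemmas apply to $F$ with paraballs replaced by dual paraballs; the symmetry realizing the normalization for $F$ is already provided by the dual paraball $B^0_\star$ of $B^0$ (Definition~\ref{defn:paraballs}, Lemma~\ref{lemma:dualballsclose}), so $F$ sits in the same coordinate system as $g$ and no further renormalization of $g$ is required. Applying the primal argument of the previous paragraph to $F$ in these coordinates yields $\int\Psi(F)\le C_0$ and $\int_{|x|\ge R}F^p\le\rho(R)$ with the same universal functions.

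The main obstacle is producing a \emph{single} $\phi\in\scriptg_d$ that simultaneously normalizes the spatial location, the relevant paraball scale, and the distinguished level index for both $g$ and $F$, while also delivering a uniform tail function $\rho$ that does not depend on $\eps$. The former is handled by the group-theoretic fact that translation of $J$ by dilation and transport of the paraball commute with the dual paraball construction; the latter is forced by iterating Lemma~\ref{lemma:secondspatial} along a dyadic scale of accuracies, at the cost of a slightly larger (but still universal) constant $C_0$ in the $\Psi$-bound.
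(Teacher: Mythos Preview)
Your proposal follows the same route as the paper: normalize via Lemma~\ref{lemma:secondspatial} so that the distinguished index is $0$ and the paraball is the unit paraball; invoke Corollary~\ref{cor:Psi} for the $\Psi$--bound; obtain the uniform tail function $\rho(R)$ by iterating Lemma~\ref{lemma:secondspatial} along a dyadic scale of accuracies; and produce $F$ by duality, using that $\tpar^*$ is conjugate to $\tpar$ under $\tau$. The paper's own argument is considerably more terse (it only says ``the bound for $\int\Psi(g)$ is Corollary~\ref{cor:Psi}; the bound for $\int_{|x|\ge R}g^p$ follows in a similar way from Lemma~\ref{lemma:secondspatial}''), so your write-up actually supplies more detail than the source.

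One point deserves tightening. You assert that ``the symmetry realizing the normalization for $F$ is already provided by the dual paraball $B^0_\star$'', citing Lemma~\ref{lemma:dualballsclose}. That lemma only says $\varrho$ is the same for a pair and its dual pair; by itself it does not pin down the paraball on which $F$ concentrates. What actually forces the compatibility is that the machinery behind Lemmas~\ref{lemma:firstspatial}--\ref{lemma:secondspatial} is \emph{bilinear}: Proposition~\ref{prop:dualparaballs} simultaneously produces a paraball capturing a level set of $g$ and the dual paraball capturing a level set of $F$, and Lemma~\ref{lemma:ballsfarapart} then controls both sides at once. So when the primal paraball has been sent to $B^0$, the paraball produced for $F$ is automatically within bounded $\varrho$--distance of $B^0_\star$, and no independent renormalization is needed. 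Once you phrase it this way, the rest of your argument (applying the $\Psi$--bound and the iterated spatial localization to $F$ in the same coordinates) goes through exactly as for $g$.
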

The bound for $\int\Psi(g)$ is Corollary~\ref{cor:Psi}. 
The bound for $\int_{|x|\ge R}g^p$ follows in a similar
way from Lemma~\ref{lemma:secondspatial}.

With these uniform bounds in hand, it is straightforward to 
derive most of the conclusions of Theorem~\ref{thm:main}.
For any sequence of functions $\{f_\nu\}$,
we write $f_\nu\rightharpoonup f$
to mean that $\int f_\nu\varphi\to \int f\varphi$
as $\nu\to\infty$,
for every compactly supported continuous test functions
$\varphi$.
If $f_\nu$ are nonnegative $L^1$ functions,
then $f_\nu\rightharpoonup f$ implies $\norm{f_\nu}_1\to \norm{f}_1$,
provided that
$\sup_\nu\int_{|x|\ge R}f_\nu\to 0$ as $R\to\infty$,

\begin{comment}
The notation $g_\nu\rightharpoonup g$ will mean that 
$\int_{\reals^d} g_\nu(x)\eta(x)\,dx \to
\int_{\reals^d} g(x)\eta(x)\,dx$ 
for every compactly supported continuous function $\eta$.
We say that $g_\nu\to g$ weakly in $L^p$
if $g_\nu^p\rightharpoonup g^p$.
\end{comment}

\begin{proof}[Proof of existence of extremizers.]
Let $\{f_\nu\}$ be any extremizing sequence. Then by the preceding lemma,
there exist $\phi_\nu\in\scriptg_d$ such that $\phi_\nu^* f_\nu
= g_\nu+h_\nu$ where 
$\norm{h_\nu}_p\to 0$,
while the functions $g_\nu$ satisfy 
the other conclusions of Lemma~\ref{lemma:summary};
%$\int\Psi(g_\nu)\le C_0$ and
%$\int_{|x|\ge R}\Psi(g_\nu)\le \rho(R)$;
and there exist corresponding functions $F_\nu$ satisfying 
the same bounds as $g_\nu$, with
$\norm{F_\nu}p=1$ and
$\langle F_\nu,\tpar g_\nu\rangle \to \apar$.

\begin{comment}
In order to prove that an extremizer exists,
it suffices to show
that there exist functions $F,g$ such that $F_\nu^p\rightharpoonup F^p$,
$g_\nu^p\rightharpoonup g^p$, and 
$\langle F_\nu,\tpar g_\nu\rangle \to \langle F,\tpar g\rangle$, 
after passage to an appropriate subsequence
of indices $\nu$.
This implies directly that $\int g^p\le \lim_{\nu\to\infty}\int g_\nu^p=1$, 
and in conjunction with the uniform bounds
$\int_{|x|\ge R}g_\nu^p\le\rho(R)$ where $\rho(R)\to 0$
as $R\to\infty$,
it implies that $\int g^p=\lim \int g_\nu^p=1$.
The same reasoning shows that $\norm{F}_p=1$.
\end{comment}

It follows directly from the Banach-Alaoglu theorem
that after passage to a subsequence of the index $\nu$,
$F_\nu^p\rightharpoonup F^p$
and
$g_\nu^p\rightharpoonup g^p$
form some $F,g\in L^p$.
%$F_{\nu},g_{\nu}$ converge weakly in $L^p$ respectively to some $F,g\in L^p$.
Since $\int F_\nu^p\eta\le 1$ for every continuous, compactly supported
function $\eta$ satisfying $\norm{\eta}_\infty\le 1$,
necessarily $\int F^p\le 1$; likewise $\int g^p\le 1$.

\begin{comment}
Weak convergence of some subsequence $F_\nu^p,g_\nu^p$ for some subsequence
follows immediately from the Banach-Alaoglu theorem since
$\int F_\nu^p\equiv 1$, $\int g_\nu^p\to 1$,
and
$\int_{|x|\ge R} \big(g_\nu^p+F_\nu^p\big)\to 0$
as $R\to\infty$, uniformly in $\nu$.
It follows that after passage to a subsubsequence, 
$F_\nu,g_\nu$ converge to limits $F,g$, respectively,
weakly in $L^p$.
\end{comment}

We claim that
\begin{equation} \label{weaklimit}
\langle F_\nu,\tpar g_\nu\rangle \to \langle F,\tpar g\rangle. 
\end{equation}
Since $\norm{F}_p\le 1$ and likewise $\norm{g}_p\le 1$
and $\langle F_\nu,\tpar g_\nu\rangle\to \apar$,
it follows that $\norm{F}_p=\norm{g}_p=1$
and that $g$ is an extremal.

To prove \eqref{weaklimit} define
\[g_{\nu,\lambda}(x) = g_\nu(x)\chi_{|x|\le\lambda}(x)\chi_{g_\nu(x)\le\lambda}(x),\]
and define $g^{(\lambda)},F_{\nu,\lambda},F^{(\lambda)}$ in the corresponding way.
For any compactly supported function $\eta\in C^1_0(\reals^d)$,
the operator $f\mapsto \eta\tpar \eta f$
is smoothing in the sense that it
maps $L^2(\reals^d)$ boundedly to the Sobolev space $H^s(\reals^d)$
for $s=(d-1)/2$. $H^s$ embeds compactly into $L^2$ in any bounded region.
Therefore the weak $L^p$ convergence of $g_{\nu,\lambda}$ to $g^{(\lambda)}$
as $\nu\to\infty$ implies $L^{2}$ norm convergence
of $\tpar(g_{\nu,\lambda})$ to $\tpar(g^{(\lambda)})$ as $\nu\to\infty$,
for every fixed $\lambda$. Therefore
$\langle F_{\nu,\lambda},\tpar g_{\nu,\lambda}\rangle
\to \langle F^{(\lambda)},\tpar g^{(\lambda)}\rangle$ as $\nu\to\infty$,
for every fixed $\lambda<\infty$.

The conclusions of Lemma~\ref{lemma:summary}
guarantee that $g_{\nu,\lambda}\to g_\nu$
and $F_{\nu,\lambda}\to F_\nu$ 
in $L^p$ norm {\em uniformly} in $\nu$ as $\lambda\to\infty$. 
This uniform convergence, together with the convergence proved in the 
preceding paragraph, give \eqref{weaklimit}. 
\end{proof}

$L^p$ norm convergence of $g_\nu$ to $g$ will be proved below. 

%?? next 4 lines not needed?
%This reasoning also demonstrates that for any extremizer $f$,
%there exists $h\in L^p$ satisfying $\langle h,\tpar f\rangle
%= \apar \norm{f}_p\norm{h}_p$,
%a fact which will be used below.

%The extremizer $g$ constructed in this way satisfies
%$\int\Psi(g)\le C_0$
%and
%$\int_{|x|\ge R}g^p\le\rho(R)$,
%by Fatou's lemma.

\begin{proof}[Proof of part (iv) of Theorem~\ref{thm:main}]
Let $f$ be an arbitrary nonnegative extremizer satisfying $\norm{f}_{p }=1$.
For any $n$ there exist $\phi_n\in\scriptg_d$ and
a decomposition $\phi_n^* f = g_n+h_n$ where
$g_n$ satisfies the inequalities of Lemma~\ref{lemma:summary},
and $\norm{h_n}_p<2^{-n}$.
By passing to a subsequence, we may arrange that $g_n$ converges
in $L^p$ and almost everywhere to a limit $g$, which is an extremizer.
Since $\phi_n^*$ preserves the $L^p$ norm and $\norm{h_n}_p\to 0$,
$\phi_n^* f\to g$ and $(\phi_n^{-1})^* g\to f$ in $L^p$ norm.
It is an elementary consequence of the concrete description of
$\scriptg_d$ that this forces $\{\phi_n^{-1}\}$ 
to be a precompact family of diffeomorphisms of $\reals^d$.
% ?? give more detail?
By passing to a subsequence we conclude that 
$f = \phi^* g$ for some
$\phi\in\scriptg_d$.
Thus $f$ satisfies all of the conclusions stated in conclusion {\it (iv)}
of Theorem~\ref{thm:main}.
\end{proof}

\section{Loose ends}

\begin{proof}[Proof of the Euler-Lagrange identity \eqref{eulerlagrange}]
Let $f$ be 
any nonnegative extremizer satisfying $\norm{f}_p=1$.
Since $q=d+1$ is the exponent conjugate to $p=(d+1)/d$,
$\tpar^*$ has the same norm as $\tpar$,
as operators from $L^p$ to $L^q$. 
Then
\begin{multline*}
\apar^{d+1}\norm{f}_p^{d+1}
= \langle \tpar f,(\tpar f)^{d}\rangle
= \langle f,\tpar^*\big([\tpar f]^{d}\big)\rangle
\\
\le \norm{f}_p\norm{T^*(\tpar f)^d}_q
\le \apar \norm{f}_p\norm{(\tpar f)^d}_p
\\
= \apar \norm{f}_p\norm{\tpar f}_q^d
= \apar^{d+1}\norm{f}_p^{d+1}.
\end{multline*}
The first inequality is an application of H\"older's inequality.
Since the extreme left-- and right-hand sides are equal,
both inequalities in this chain must be equalities.
Equality in H\"older's inequality forces $\tpar^*\big([\tpar f]^{d})$
to agree almost everywhere with some constant multiple of $f^{1/d}$.
The chain of equalities then forces this constant to be $\apar^{d+1}$.
\end{proof}

\begin{lemma} \label{lemma:nonzeroae}
If $f$ is
a nonnegative extremizer for \eqref{eq:paraboloid},
then 
$f(x)>0$ for almost every $x\in\reals^d$.
\end{lemma}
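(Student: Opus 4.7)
The plan is to deduce the conclusion from the Euler-Lagrange identity $\tpar^*([\tpar f]^d) = \apar^{d+1} f^{1/d}$ established just above. Since $f$ is nonnegative, it suffices to show that $\tpar^*([\tpar f]^d)(x) > 0$ for almost every $x$; I will in fact verify this for every $x \in \reals^d$.

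First I unwind the positivity. Let $E=\{f>0\}$; the hypothesis that $f$ is a nontrivial extremizer gives $|E|>0$. Since $\tpar f\ge 0$,
\[
\tpar^*([\tpar f]^d)(x) = \int_{\reals^{d-1}} [\tpar f(x+(t,|t|^2))]^d\,dt
\]
is positive iff $\tpar f(x+(t,|t|^2))>0$ on a set of $t$ of positive measure, and in turn $\tpar f(y)>0$ iff $\{s:y-(s,|s|^2)\in E\}$ has positive measure. Two applications of Fubini therefore give
\[
\tpar^*([\tpar f]^d)(x)>0 \iff \bigl|\{(t,s)\in\reals^{2(d-1)}:x+(t-s,\,|t|^2-|s|^2)\in E\}\bigr|>0.
\]

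I would then perform the linear change of variables $u=t-s$, $w=t+s$, which has constant Jacobian and satisfies $|t|^2-|s|^2=u\cdot w$. The task reduces to showing that, for every $x$,
\[
\bigl|\{(u,w)\in\reals^{d-1}\times\reals^{d-1}:x+(u,u\cdot w)\in E\}\bigr|>0.
\]
On the open set $\{u\ne 0\}$ the map $(u,w)\mapsto(u,u\cdot w)$ is a submersion onto $(\reals^{d-1}\setminus\{0\})\times\reals$; decomposing $w=\alpha u/|u|+w^{\perp}$ with $w^{\perp}\perp u$ and applying Fubini shows that the preimage of any positive-measure subset of this image has positive $(u,w)$-measure (in fact infinite when $d\ge 3$, and finite positive when $d=2$ via the explicit Jacobian $|u|^{-1}$). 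Applied to $E-x$, which meets $\{u\ne 0\}\times\reals$ in a set of measure $|E|>0$ because the excised hyperplane $\{u=0\}\times\reals$ is Lebesgue null, this yields the required positivity uniformly in $x$; the Euler-Lagrange equation then delivers $f>0$ almost everywhere.

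The main technical step is the Fubini/change-of-variables bookkeeping for the submersion $(u,w)\mapsto(u,u\cdot w)$. The slight borderline behavior in dimension $d=2$, where this map has trivial fibers and is a local diffeomorphism rather than a genuine submersion, causes no real trouble, since we need only positivity and not finiteness of the preimage measure.
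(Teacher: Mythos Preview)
Your argument is correct, and it takes a genuinely different route from the paper's. The paper does \emph{not} invoke the Euler--Lagrange identity here; instead it runs a direct variational perturbation: if $f$ vanished on a positive-measure set $\tilde G$ contained in $G=\{z+(s',|s'|^2)-(t',|t'|^2):z\ \text{a Lebesgue point of}\ \{f>0\},\ s'\ne t'\}$, then $f_\eps=f+\eps\chi_{\tilde G}$ would satisfy $\|f_\eps\|_p^p=1+O(\eps^p)$ while $\|\tpar f_\eps\|_q^q\ge\|\tpar f\|_q^q+c\eps$, contradicting extremality. The positivity of the first variation is traced to $\langle \tpar^*\tpar f,\chi_{\tilde G}\rangle>0$, using that the kernel of $\tpar^*\tpar$ has continuous positive density on the open set $\{(s',|s'|^2)-(t',|t'|^2):s'\ne t'\}$. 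The paper then bootstraps this from an open set to all of $\reals^d$ in two further iterations.

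Your approach instead uses the Euler--Lagrange equation already in hand to reduce the entire question to the purely measure-theoretic statement that $\{(u,w):x+(u,u\cdot w)\in E\}$ has positive measure for every $x$, which you dispatch in one stroke via the submersion $(u,w)\mapsto(u,u\cdot w)$ on $\{u\ne 0\}$. This is cleaner: no perturbation argument, no iteration, and the geometry (that the image covers $(\reals^{d-1}\setminus\{0\})\times\reals$, whose complement is null) is made completely explicit. The paper's version has the modest advantage of being logically independent of the Euler--Lagrange identity, but in the given order of exposition that identity is already available, so your route is the more economical one.
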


\begin{proof}
Let $f$ be a nonnegative extremizer satisfying $\norm{f}_p=1$.
Consider initially the set $G$ consisting of all points
expressible in the form $z+(s',|s'|^2)-(t',|t'|^2)$
such that $z$ 
is a Lebesgue point of $\{y: f(y)>0\}$
and $s'\ne t'\in\reals^{d-1}$. 
We claim that $f(x)>0$ for almost every $x\in G$.
If not, choose some
subset $\tilde G\subset G$ satisfying $0<|\tilde G|<\infty$.
Consider the functions $f_\eps = f+\eps\chi_{\tilde G}$.
Then $\norm{f_\eps}_p^p= 1+O(\eps^p)= 1+o(\eps)$ as $\eps\to 0$.
On the other hand,
\[
\norm{\tpar f_\eps}_{d+1}^{d+1}
\ge \norm{\tpar f}_{d+1}^{d+1}
+ (d+1)\eps 
\int (\tpar f)^d\tpar\chi_{\tilde G}.\]
Now because $f,\chi_{\tilde G}$ are nonnegative
functions and $\tpar$ is defined by convolution
with a nonnegative measure,
$\int (\tpar f)^d\tpar\chi_{\tilde G}>0$ 
if and only if
$\int \tpar f\cdot\tpar\chi_{\tilde G}
= \langle \tpar^*\tpar f,\,\chi_{\tilde G}\rangle>0$.

$\tpar^*\circ\tpar$ is defined by convolution with
a measure which has a continuous, strictly positive
Radon-Nikodym derivative with respect to Lebesgue measure,
on the open set $\{(s',|s'|^2)-(t',|t'|^2): s'\ne t'\}$.
The condition that $\tilde G\subset G$ thus ensures that 
$\tpar^*\tpar f>0$ at every point of $\tilde G$.
Therefore 
$\langle \tpar^*\tpar f,\,\chi_{\tilde G}\rangle>0$.

Consequently $\norm{\tpar f_\eps}_{d+1}\ge \apar+c\eps$
for some $c>0$ for all sufficiently small $\eps>0$,
and therefore $\norm{\tpar f_\eps}_q/\norm{f_\eps}_p
\ge \apar+c'\eps$ for small positive $\eps$ for 
all $c'<c$, contradicting the extremality of $f$.
Thus any extremizer must be positive almost everywhere on some open set.

%There exists $h$ satisfying
%$\norm{h}_p=1$ and $\langle h,\tpar f\rangle = \apar$.
%The above conclusion holds equally well for $h$. It then follows from
%a repetition of the above reasoning that $f>0$ almost everywhere.

This additional information can be fed back into the above argument,
which then demonstrates that $f>0$ almost everywhere
at every point $z+(s',|s'|^2)-(t',|t'|^2)$ where
$z$ varies over a subset of full measure of some ball.
One more iteration establishes the conclusion.
\end{proof}

\begin{comment}
\begin{proof}[Justification of the Euler-Lagrange equation]
Let $f$ be any nonnegative extremizer satisfying $\norm{f}_{(d+1)/d}=1$.
Let $E\subset\reals^d$ be any set of positive, finite measure
for which there exists $c>0$ such that $f(x)\ge c$ for every $x\in E$.
Let $h$ be any function satisfying $|h|\le c\chi_E$
and $\int f^{1/d}h=0$.
Consider the trial functions $f_t = f+t h$, for all 
$t\in (-1,1)$.
Then $\norm{f_t}_p = 1+o(t)$ as $t\to 0$.
On the other hand,
\[
\norm{\tpar f_t}_{d+1}^{d+1}
= \apar + t\int (\tpar f)^d\cdot (\tpar h)
+ O(t^2)
\text{ as $t\to 0$.}\]
Therefore 
\[\int  (\tpar f)^d\cdot (\tpar h)=0\]
for every such function $h$.
This equation can be rewritten as
$h\perp \tpar^*(\tpar f)^d$.
Thus \[h\perp f^{1/d}\Rightarrow h\perp 
\tpar^*(\tpar f)^d,\]
for any bounded function $h$ which is supported on some set
$E$ of the above type.

In light of Lemma~\ref{lemma:nonzeroae},
this can hold only if 
there exists a scalar $\lambda$ such that
$\tpar^*(\tpar f)^d = \lambda f^{1/d}$ almost everywhere.
Multiplying by $f$ and integrating, this becomes
$\int (\tpar f)^{d+1} = \lambda f\cdot f^{1/d}$,
forcing $\lambda=\apar^{d+1}$
since $f$ is an extremal and $\apar$ is the optimal constant.
\end{proof}
\end{comment}

\begin{corollary}
Let $f$ be a nonnegative extremizer for \eqref{eq:paraboloid}.
Then for any compact set $K\subset\reals^d$
there exists $c>0$
such that $f(x)\ge c$ for almost every $x\in K$.
\end{corollary}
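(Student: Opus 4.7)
The proof combines the Euler--Lagrange identity \eqref{eulerlagrange} with the a.e.\ positivity supplied by Lemma~\ref{lemma:nonzeroae}. Since $f>0$ almost everywhere, for any ball $B^*\subset\reals^d$ we may choose $\eps>0$ and a bounded measurable set $A\subset B^*$ of positive measure such that $f\ge\eps\chi_A$. By monotonicity of $\tpar,\tpar^*$ on nonnegative functions, together with the Euler--Lagrange equation,
\[
f^{1/d}(x) = \apar^{-(d+1)}\tpar^*\bigl([\tpar f]^d\bigr)(x)\;\ge\;\apar^{-(d+1)}\eps^d\,\tpar^*\bigl([\tpar\chi_A]^d\bigr)(x)\qquad\text{a.e.}
\]
So for any given compact $K\subset\reals^d$ it suffices to choose $B^*$ (depending on $K$) in such a way that $\tpar^*\bigl([\tpar\chi_A]^d\bigr)$ is bounded below by a positive constant on $K$.

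To obtain such a bound, truncate the outer integration in
\[
\tpar^*\bigl([\tpar\chi_A]^d\bigr)(x) = \int_{\reals^{d-1}}\bigl[\tpar\chi_A(x+(s',|s'|^2))\bigr]^d\,ds'
\]
to $|s'|\le R$ and apply Jensen's inequality $\int\phi^d\,ds'\ge c_R\bigl(\int\phi\,ds'\bigr)^d$ for nonnegative $\phi$, with $c_R>0$ depending only on $R$ and $d$:
\[
\tpar^*\bigl([\tpar\chi_A]^d\bigr)(x)\;\ge\;c_R\Bigl(\int_{|s'|\le R}\!\int_{\reals^{d-1}}\chi_A\bigl(x+(s'-t',\,|s'|^2-|t'|^2)\bigr)\,dt'\,ds'\Bigr)^{d}.
\]
After the change of variables $(s',t')\mapsto(u,w):=(s'-t',\,s'+t')$ and integration over the $(d-2)$-dimensional fibers perpendicular to $u$, the inner double integral becomes a weighted integral of $\chi_A(x+\cdot)$ against a truncation of the convolution kernel of $\tpar^*\tpar$, which as noted in the proof of Lemma~\ref{lemma:nonzeroae} is continuous and strictly positive on the open set $\{(u,v)\in\reals^{d-1}\times\reals:u\ne 0\}$.

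Given $K$ compact, choose $B^*$ so that its projection onto the first $d-1$ coordinates is well-separated from the corresponding projection of $K$. Then for every $x\in K$ the shifted set $A-x$ lies in a compact subset of $\{u\ne 0\}$ that varies continuously with $x$; the continuous positive kernel attains a positive infimum on the compact set $\bigcup_{x\in K}(A-x)=A-K$. This yields the required positive lower bound for $\tpar^*\bigl([\tpar\chi_A]^d\bigr)(x)$ uniformly for $x\in K$, and hence for $f(x)$ almost everywhere on $K$.

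The principal technical obstacle is the non-integrable singularity of the $\tpar^*\tpar$ kernel at $u=0$: this is what forces both the truncation $|s'|\le R$ (so that Jensen's inequality is applied over a finite-measure set and the inner integral is quantitatively finite) and the geometric separation of $A$ from $K$ in the first $d-1$ coordinates (so that $A-x$ avoids a neighborhood of the singular set uniformly in $x\in K$, ensuring the kernel is bounded below by a positive constant there). With these two adjustments made, the argument reduces to a routine continuity-and-compactness step.
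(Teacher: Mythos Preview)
Your argument is correct and rests on the same ingredients as the paper's sketch: the Euler--Lagrange identity, Lemma~\ref{lemma:nonzeroae}, and positivity of the $\tpar^*\tpar$ convolution measure on $\{v'\ne 0\}$. The only difference is in how the exponent $d$ in $\tpar^*\bigl([\tpar f]^d\bigr)$ is handled. The paper, in its one-sentence proof, invokes iteration: implicitly one uses the crude bound $[\tpar f]^d\ge c^d\chi_{B_1}$ on a ball where $\tpar f\ge c$, applies $\tpar^*$ to $\chi_{B_1}$, and repeats---this is the content of ``any point is expressible as a finite sum of elements'' of the open set on which the kernel is positive. You instead apply Jensen's inequality over $\{|s'|\le R\}$ to pass directly to a truncated $\tpar^*\tpar$ kernel, and exploit the freedom (granted by $f>0$ a.e.) to place $A$ so that $A-K$ sits compactly inside $\{v'\ne 0\}$, reducing the argument to a single step. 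Your route is more explicit and avoids iteration; the paper's is terser but relies on repeating the basic step finitely many times.
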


This follows from the Euler-Lagrange equation by reasoning
already used in the proof of Lemma~\ref{lemma:nonzeroae},
since $\tpar^*\circ\tpar$ is expressed by convolution with
a nonnegative measure $\mu$ with the property that any
point is expressible as a finite sum of elements of
an open set on which $\mu$ has a continuous, strictly
positive Radon-Nikodym derivative. 

It remains to be proved that any extremizing sequence has
a subsequence which converges
in $L^p$ norm, rather
than the weak convergence proved above.

\begin{lemma} \label{lemma:strongconvergence}
Let $\{f_\nu\}$ be an extremizing sequence for
inequality \eqref{eq:paraboloid};
thus $f_\nu\ge 0$,
$\norm{f_\nu}_p=1$,
and $\norm{\tpar f_\nu}_q\to \apar$.
Then there exist a sequence of symmetries
$\{\phi_\nu\}\subset\scriptg_d$,
an extremal $F$ for \eqref{eq:paraboloid},
and a subsequence $\{f_{\nu_k}\}$
such that
$\phi_{\nu_k}^*f_\nu\to F$
in $L^{(d+1)/d}(\reals^d)$ norm.
\end{lemma}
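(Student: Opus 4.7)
The plan is to combine the weak-convergence machinery from the existence portion of Theorem~\ref{thm:main} with the uniform bounds of Lemma~\ref{lemma:summary}, upgrade the existing distributional convergence to genuine weak $L^p$ convergence after renormalizing by symmetries, and then invoke uniform convexity of $L^p$ to obtain strong convergence. First I would apply Lemma~\ref{lemma:summary} to each sufficiently late $f_\nu$ (allowed since $\{f_\nu\}$ is $(1-\delta_\nu)$-quasiextremal with $\delta_\nu\to 0$) to produce $\phi_\nu\in\scriptg_d$ and a decomposition $\phi_\nu^*f_\nu=g_\nu+h_\nu$ with $\|h_\nu\|_p\to 0$, $\int\Psi(g_\nu)\le C_0$, and $\int_{|x|\ge R}g_\nu^p\le\rho(R)$. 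Repeating the weak-limit argument already carried out for the existence proof, I extract a subsequence along which $g_\nu^p\rightharpoonup g^p$ in the distributional sense, where $g$ is an extremizer with $\|g\|_p=1$.

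The central step is to upgrade the distributional convergence $g_\nu^p\rightharpoonup g^p$ to genuine weak $L^p$ convergence $g_\nu\rightharpoonup g$ in $L^p(\reals^d)$. To this end I would truncate
\[
g_{\nu,\lambda}(x)=g_\nu(x)\,\chi_{\{|x|\le\lambda\}}(x)\,\chi_{\{g_\nu(x)\le\lambda\}}(x),
\]
which are uniformly bounded in $L^\infty$ and compactly supported, and by a Banach--Alaoglu argument, a diagonal subsequence converges weak-$*$ in $L^\infty$ (hence weakly in $L^p$) to some $g^{(\lambda)}$. The uniform higher integrability $\int\Psi(g_\nu)\le C_0$ with $\Psi(t)/t^p\to\infty$ as $t\to\infty$, combined with the tail decay $\int_{|x|\ge R}g_\nu^p\le\rho(R)$, gives $\|g_\nu-g_{\nu,\lambda}\|_p\to 0$ as $\lambda\to\infty$ uniformly in $\nu$, and correspondingly $\|g-g^{(\lambda)}\|_p\to 0$. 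Then for any compactly supported $\varphi\in L^{p'}$,
\[
\Bigl|\int(g_\nu-g)\varphi\Bigr|\le\Bigl|\int(g_{\nu,\lambda}-g^{(\lambda)})\varphi\Bigr|+\bigl(\|g_\nu-g_{\nu,\lambda}\|_p+\|g-g^{(\lambda)}\|_p\bigr)\|\varphi\|_{p'},
\]
and sending first $\nu\to\infty$ and then $\lambda\to\infty$ makes the right-hand side vanish. Density of compactly supported functions in $L^{p'}$ together with the uniform bound $\sup_\nu\|g_\nu\|_p<\infty$ extends this to all $\varphi\in L^{p'}$, yielding $g_\nu\rightharpoonup g$ weakly in $L^p$.

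Finally, since $\|\phi_\nu^*f_\nu\|_p=1$ and $\|h_\nu\|_p\to 0$, we have $\|g_\nu\|_p\to 1=\|g\|_p$. The Radon--Riesz property of $L^p(\reals^d)$ for $1<p<\infty$ (a consequence of uniform convexity) then promotes the combination of weak convergence and norm convergence to strong $L^p$ convergence: $g_\nu\to g$ in $L^p$ norm. Combined with $\|h_\nu\|_p\to 0$, this gives $\phi_\nu^*f_\nu\to g$ in $L^p(\reals^d)$; setting $F=g$ concludes the lemma.

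The main obstacle is the middle paragraph, namely the upgrade from the distributional convergence $g_\nu^p\rightharpoonup g^p$ (which is all the existence argument directly provides) to weak $L^p$ convergence $g_\nu\rightharpoonup g$, since a priori these two notions are genuinely different. The argument works precisely because Lemma~\ref{lemma:summary} supplies \emph{both} uniform control of the $L^p$-mass at large values of $g_\nu$ (through $\Psi$) and uniform control at spatial infinity (through $\rho$); removing either would break the truncation-and-exchange scheme above. Once weak $L^p$ convergence is in hand, uniform convexity finishes the proof without further input.
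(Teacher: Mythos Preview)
Your approach is correct and in fact cleaner than the paper's. The paper does \emph{not} use the Radon--Riesz property; instead it invokes the Euler--Lagrange identity $\tpar^*\big([\tpar F]^d\big)=\apar^{d+1}F^{1/d}$ together with Lemma~\ref{lemma:nonzeroae} ($F>0$ a.e.) to produce $H$ with $\tpar^*H=\apar F^{p-1}$, shows via the smoothing/Rellich argument that $\langle F_\nu,F^{p-1}\rangle\to 1$, and then uses a bare-hands level-set computation with Young's inequality to force $\int|F_\nu^p-F^p|\to 0$. Your route bypasses both the Euler--Lagrange equation and the positivity lemma, relying only on uniform convexity of $L^p$; this is more robust and transfers more readily to related problems.

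One point in your middle paragraph deserves tightening. You define $g^{(\lambda)}$ as the weak--$*$ $L^\infty$ limit of the truncations $g_{\nu,\lambda}$ and then assert ``correspondingly $\|g-g^{(\lambda)}\|_p\to 0$,'' where $g$ was introduced via $g_\nu^p\rightharpoonup g^p$. As written this identification of the two limiting objects is not justified: there is no a~priori reason the weak--$*$ limits $g^{(\lambda)}$ should coincide with truncations of the function $g$ coming from the distributional limit of $g_\nu^p$. The simplest fix is to skip that identification entirely. Since $L^p$ is reflexive, pass directly to a subsequence with $g_\nu\rightharpoonup\bar g$ weakly in $L^p$; the very smoothing/Rellich argument from the existence proof (applied to this weak $L^p$ limit, which is all it actually uses) gives $\langle F_\nu,\tpar g_\nu\rangle\to\langle F,\tpar\bar g\rangle=\apar$ with $\|\bar g\|_p\le 1$, forcing $\|\bar g\|_p=1$. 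Now Radon--Riesz applies with $\bar g$, and the lemma follows with $F=\bar g$. (A~posteriori, strong convergence gives $g_\nu^p\to\bar g^p$ in $L^1$, so $\bar g=g$ after all.) With this adjustment your proof is complete and shorter than the paper's.
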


\begin{proof}
After passing to a subsequence, we may
choose $\phi_\nu$ so that $\phi_\nu^*f_\nu=F_\nu+h_\nu$,
so that
$\norm{h_\nu}_p\to 0$,
$\norm{F_\nu}^p\to 1$,
$F_\nu^p\rightharpoonup F^p$
for some nonnegative extremal $F$ satisfying $\norm{F}_p=1$,
and $F_\nu,F$ satisfy the higher integrability
and spatial decay bounds provided by Lemma~\ref{lemma:summary}.
Moreover, there exists $H\ge 0$ satisfying
$\norm{H}_p=1$ 
and $\tpar^* H = \apar F^{p-1}$ almost everywhere.

As in the proof of \eqref{weaklimit}
it follows from
the {\em a priori} bounds 
and Rellich's lemma that after passing to a subsequence,
$\langle F_\nu,\tpar^* H\rangle \to \langle F,\tpar^* H\rangle
= \apar \langle F,F^{p-1}\rangle = \apar$.
Therefore
$\langle F_\nu,F^{p-1}\rangle \to 1 = \langle F,F^{p-1}\rangle$.

For any $\nu$ and any small $\delta>0$
denote
$E_{\delta,\nu} = \{x: F_\nu^p(x) \le(1-\delta)F^p(x)\}$.
Recall that $q=d+1$ is the exponent dual to $p=(d+1)/d$.
Then
\begin{multline*}
\int F_\nu F^{p-1}
\le \int p^{-1}F_\nu^p + q^{-1} (F^{p-1})^q
= \int p^{-1}F_\nu^p + q^{-1} F^p
\\
\le p^{-1}\int F^p
+ q^{-1}\int F^p
-\delta p^{-1} \int_{E_{\delta,\nu}} F^p
= 
1-\delta p^{-1} \int_{E_{\delta,\nu}} F^p.
\end{multline*}
Therefore
\begin{equation*} 
\int_{E_{\delta,\nu}} F^p\to 0 \text{ as } \nu\to\infty
\end{equation*}

Because $F>0$ almost everywhere, 
this implies that for any compact subset $K\subset\reals^d$,
$|E_{\delta,\nu}\cap K|\to 0$
as $\nu\to\infty$.
Together with the higher integrability and spatial decay
bounds of Lemma~\ref{lemma:summary}, this implies that
$\int_{E_{\delta,\nu}} F_\nu^p\to 0$ also.
We may choose $\delta=\delta(\nu)$ to tend to 
zero as $\nu\to\infty$, yet still have 
\[\int_{E_{\delta(\nu),\nu}}(F^p+F_\nu^p)\to 0.\]

Now
\[
\int_{F_\nu^p(x)\ge (1-\delta)F^p(x)}
(F^p-F_\nu^p)
\le
\delta\int F^p\le\delta.
\]
Therefore by splitting the integral into the two regions 
$F_\nu^p\le (1-\delta(\nu))F^p$ and
$(1-\delta(\nu))F^p\le F_\nu^p\le F^p$
we conclude that
\begin{equation} \label{onesided}
\int_{F_\nu\le F} (F^p-F_\nu^p)\to 0
\text{ for each } \delta>0.
\end{equation}
Since $\int F_\nu^p=\int F^p$,
\eqref{onesided} forces $\int |F^p-F_\nu^p|\to 0$.
\end{proof}

One final conclusion of Theorem~\ref{thm:main} remains to be 
established, the existence of a decomposition
$g=g^\sharp+g^\flat$ where $\norm{g^\sharp}_{C^1}\le R$
and $\norm{g^\flat}_p\le\eta(R)$, where $\eta\to 0$ as $R\to\infty$.
Equivalently, if $\zeta\in C^\infty(\reals^d)$ is compactly supported
and satisfies $\zeta(0)=1$, and if $M_\rho$ is the Fourier
multiplier operator with symbol $1-\zeta(\xi/\rho)$
then $\norm{M_\rho g}_p\le\tilde\eta(\rho)$
where $\tilde\eta(\rho)\to 0$ as $\rho\to\infty$. 
Here $g$ is a $(1-\delta)$--quasiextremal which satisfies
\eqref{concl:higherintegrability} and \eqref{concl:decay},
and $\delta$ is permitted to depend on $R$ and/or $\rho$ and can be 
chosen to be as small as may be desired.

\begin{proof}[Proof of \eqref{concl:smoothness}]
Fix $F$ satisfying $\norm{F}_p=1$ such that
$\langle g,\tpar^*F\rangle\ge (1-\delta)\apar$.
Let $\eps>0$.
By the results proved above,
if $\rho$ is sufficiently large then
$\tpar^*F$ may be decomposed as
$\tpar^* F = H^\sharp+H^\flat$
where
$\widehat{H^\sharp}(\xi)=0$ for all $|\xi|\ge\rho/2$
and
$\norm{H^\flat}_q<\eps$.

Split $g=g^\sharp+g^\flat$
where $\widehat{g^\sharp}$ is supported
where $|\xi|<2\rho$,
$\widehat{g^\flat}$ is supported
where $|\xi|>\rho$,
and $\norm{g^\sharp}_p+\norm{g^\flat}_p
\le C<\infty$; this may be done with $C$
a constant depending only on $d,p$.
To do this, set $g^\flat = M_\rho g$
where $M_\rho$ has symbol $1-\zeta(\xi/\rho)$,
with the auxiliary function $\zeta\in C^\infty_0$
chosen to satisfy $\zeta(\xi)\equiv 1$ whenever
$|\xi|\le 2$. Then $g^\flat$ will continue to
satisfy \eqref{concl:higherintegrability}
and \eqref{concl:decay}, uniformly in $\rho,g$
so long as $\rho\ge 1$.
 
We aim to prove that if $\delta,\eps$ are sufficiently
small and $\rho$ is sufficiently large,
then $\norm{g^\flat}_p$ is less than any preassigned quantity.

Consider $\tilde g = g^\sharp-g^\flat$.
Then 
\begin{align*}
\langle \tilde g,\tpar^*F\rangle
&= \langle g,\tpar^*F\rangle
-2\langle g^\flat,\tpar^*F\rangle
\\
&= \langle g,\tpar^*F\rangle
-2\langle g^\flat,H^\flat \rangle
\\
&\ge (1-\delta)\apar
-2\norm{g^\flat}_p\norm{H^\flat}_q
\\
&\ge (1-\delta)\apar
-2C\eps.
\end{align*}
Thus
\[\norm{\tilde g}_p\ge (1-\delta) - 2C\eps\apar^{-1}.\]
The same reasoning gives
\begin{equation}\label{gsharplarge}
\norm{g^\sharp}_p\ge (1-\delta) - C\eps\apar^{-1}.
\end{equation}

Since $1<p<2$, there exists $c_p<\infty$ such that
for any $s,t\in\reals$,
\[
\tfrac12 |s+t|^p
+\tfrac12 |s-t|^p
\ge |s|^p + c_p\min(|s|^{p-2}|t|^2,|t|^p).
\]
Therefore
\begin{equation*}
\tfrac12 |g(x)|^p
+\tfrac12 |\tilde g(x)|^p
\ge
|g^\sharp(x)|^p + c_p\min(|g^\sharp(x)|^{p-2}|g^\flat(x)|^2,|g^\flat(x)|^p)
\end{equation*}
for all points $x\in\reals^d$.
Therefore
\begin{equation*}
1-O(\eps+\delta) \ge 
\int |g^\sharp|^p
+ c_p \int \min\big(|g^\sharp|^{p-2}|g^\flat|^2,|g^\flat|^p\big).
\end{equation*}
Let $\tilde\eps>0$ be another small quantity.
Then
\begin{equation*}
1-O(\eps+\delta) \ge 
\norm{g^\sharp}_p^p
+ c\tilde\eps^{2-p}\int_{|g^\flat(x)|>\tilde\eps|g^\sharp(x)|} |g^\flat(x)|^p\,dx
\end{equation*}
and consequently by \eqref{gsharplarge},
\begin{equation*}
 \tilde\eps^{2-p}
\int_{|g^\flat(x)|>\tilde\eps|g^\sharp(x)|} |g^\flat(x)|^p\,dx
= O(\eps+\delta)
\end{equation*}
and hence
\begin{equation*}
\norm{g^\flat}_p^p
\le 
\tilde\eps^p\norm{g^\sharp}_p^p
+ C\tilde\eps^{p-2}(\eps+\delta)
\le C\tilde\eps^p
+ C\tilde\eps^{p-2}(\eps+\delta).
\end{equation*}
By choosing first $\eps,\delta$ to be small,
then $\tilde\eps= (\eps+\delta)^{1/2(2-p)}$,
we arrive at the desired bound.
\end{proof}

\begin{comment}
If $g^\flat$ were supported on a fixed bounded set, then
this would give an upper bound of
$C\tilde\eps^p + O((\eps+\delta)\tilde\eps^{p-2})$, and
choosing $\tilde\eps$ to be an appropriate small
positive power of $\eps+\delta$
would yield the desired conclusion. We have an adequate substitute
in the inequality
$\int_{|x|>R}|g^\flat|^p\,dx\le c(R)$
where $c(R)\to 0$ as $R\to\infty$; the function
$c$ is independent of $\rho$.
This then leads to an upper bound
\begin{equation*}
\norm{g^\flat}_p^p
\le
C\tilde\eps^pR^d 
+ C\tilde\eps^{p-2}(\eps+\delta)
+ c(R),
\end{equation*}
which still implies the conclusion.
\end{comment}

It is natural to ask whether the set of all extremizers
of \eqref{eq:paraboloid} satisfying $\norm{f}_p=1$ 
is unique modulo the action of $\scriptg_d$. We can offer only the following
weak substitute.

\begin{corollary}
Let $\{F_\nu\}$ be any sequence of 
extremizers of \eqref{eq:paraboloid}
which satisfy $\norm{F_\nu}_p=1$. 
Then there exist a subsequence $\{F_{\nu_k}\}$
and a sequence of symmetries $\phi_k\in \scriptg_d$
such that the sequence
$\{\phi_k^*F_{\nu_k}\}$
is convergent in $L^p(\reals^d)$ norm.
\end{corollary}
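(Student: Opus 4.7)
The plan is to recognize that this corollary is essentially a restatement of Lemma~\ref{lemma:strongconvergence} (the precompactness assertion in part (ii) of Theorem~\ref{thm:main}) applied to a sequence consisting entirely of extremizers. I will therefore verify that any such sequence is automatically an extremizing sequence in the sense of Definition~2 and then invoke that lemma directly.

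First, under the standing convention that all functions under discussion are nonnegative, the given $F_\nu$ may be taken to satisfy $F_\nu\ge 0$. (Any complex-valued extremizer equals $e^{i\theta}$ times a nonnegative extremizer by part \textit{(v)} of Theorem~\ref{thm:main}, so the unimodular constant may be absorbed without affecting either the $L^p$ norm or the orbit under $\scriptg_d$.) By the defining property of an extremizer combined with the normalization $\norm{F_\nu}_p=1$, we have
\[
\norm{\tpar F_\nu}_q = \apar \norm{F_\nu}_p = \apar
\]
for every $\nu$, so in particular $\norm{\tpar F_\nu}_q\to \apar$. Thus $\{F_\nu\}$ satisfies both requirements of Definition~2, making it an extremizing sequence.

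Lemma~\ref{lemma:strongconvergence} then supplies, after passage to a subsequence $\{F_{\nu_k}\}$, a sequence of symmetries $\phi_k\in\scriptg_d$ and an extremizer $F$ with $\norm{F}_p=1$ such that $\phi_k^* F_{\nu_k}\to F$ in $L^p(\reals^d)$ norm. This is precisely the conclusion. There is essentially no obstacle at this stage: the substantive analytic work—higher integrability (Corollary~\ref{cor:Psi}), spatial localization (Lemma~\ref{lemma:secondspatial}), almost-everywhere positivity of extremizers (Lemma~\ref{lemma:nonzeroae}), and the upgrade from weak to strong $L^p$ convergence via the Euler--Lagrange relation in the proof of Lemma~\ref{lemma:strongconvergence}—has already been carried out. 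The only conceptual point is the (trivial) observation that a sequence of exact extremizers is a fortiori an approximate extremizing sequence, so that the full machinery becomes available.
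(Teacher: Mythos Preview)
Your proposal is correct and matches the paper's own proof, which simply states that the corollary follows directly from Lemma~\ref{lemma:strongconvergence}. Your additional remarks on nonnegativity and on why a sequence of exact extremizers is automatically an extremizing sequence are accurate and make the one-line deduction explicit.
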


This follows directly from Lemma~\ref{lemma:strongconvergence}.

\begin{comment}
Any subsequence of $\{g_\nu\}$ has a further subsequence
$\{g_{\nu_k}\}$
such that $g_{\nu_k}^p\rightharpoonup F$
for some extremal $F$, which likewise satisfies
all conclusions of Lemma~\ref{lemma:summary}.
These limits $F$ are not unique, but form a precompact
family. 
\end{comment}

\section{On the symmetry groups} \label{onsymmetry}

To list all elements of $\scriptg_{d,d}$, we
employ coordinates $(x,y) = (x',x_d;y',y_d)$
with $x',y'\in\reals^{d-1}$ and $x_d,y_d\in\reals^1$.
$\scriptg_{d,d}$ is a subset of the set of all 
elements $\Phi$ of $\diff(\reals^d)\times\diff(\reals^d)$
of the form
\begin{equation}\label{diffeoform}
(x',x_d;y',y_d)
\mapsto \Big(Lx'+u, tx_d+a+x'\cdot v+Q(x');
\tilde L y'+\tilde u, ty_d+\tilde a + y'\cdot \tilde v + \tilde Q(y')\Big),
\end{equation}
where $L,\tilde L$ are linear endomorphisms of $\reals^{d-1}$,
$v,\tilde v\in\reals^{d-1}$,
$0\ne t\in\reals$,
$a,\tilde a\in\reals$,
and
$Q,\tilde Q:\reals^{d-1}\to\reals$ are
homogeneous quadratic polynomials.
The proof of the next lemma is a straightforward
verification, left to the reader.

\begin{lemma}
A mapping $\Phi$ of the form \eqref{diffeoform}
belongs to $\scriptg_{d,d}$
if and only if all of the following equations hold:
\begin{gather*}
-a-\tilde a +2u\cdot\tilde u-|u|^2-|\tilde u|^2 =0
\\
\tilde L^* L = tI
\text{ for some } t\in\reals\setminus\{0\}
\\
v + 2L^*(\tilde u-u)=0
\\
\tilde v + 2\tilde L^*(\tilde u-u)=0
\\
Q(x') = |Lx'|^2-t|x'|^2
\\
\tilde Q(y') = |\tilde Ly'|^2-t|y'|^2.
\end{gather*} 

Given any $L,u,t,a,v$ there exist 
$Q,\tilde Q,\tilde L,\tilde u,\tilde a,\tilde v$
satisfying all these equations, and these quantities
are uniquely determined by $L,u,t,a,v$.
\end{lemma}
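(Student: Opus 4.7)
The plan is to substitute the explicit form \eqref{diffeoform} directly into the defining condition $\Theta(\phi(x),\psi(y)) = \lambda\Theta(x,y)$ and then match coefficients monomial by monomial in the resulting polynomial identity in $(x',x_d,y',y_d)$. Since $\Theta$ is a polynomial of degree two and the composition remains polynomial, the identity holds if and only if the coefficient of each monomial on the two sides agrees, giving a finite system of algebraic conditions on the parameters.

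First I would write out
\[
\Theta(\phi(x),\psi(y)) = t(x_d-y_d) + (a-\tilde a) + x'\cdot v - y'\cdot\tilde v + Q(x') - \tilde Q(y') - |Lx' + u - \tilde L y' - \tilde u|^2
\]
and demand that this equal $\lambda(x_d - y_d - |x'|^2 + 2x'\cdot y' - |y'|^2)$. Expanding the squared norm and matching term by term, the coefficients of $x_d$ and $y_d$ force $\lambda = t$; the bilinear cross term $x'\otimes y'$ forces $\tilde L^*L = tI$; the pure quadratic forms in $x'$ alone and in $y'$ alone determine $Q$ and $\tilde Q$ in terms of $L,\tilde L,t$; the monomials linear in $x'$ alone and in $y'$ alone, which are absent on the right-hand side, must each vanish, producing the equations relating $v,\tilde v$ to $L^*(\tilde u - u),\tilde L^*(\tilde u - u)$; and the purely constant term yields the equation relating $a,\tilde a,u,\tilde u$. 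Both implications (that $\Phi \in \scriptg_{d,d}$ forces the listed equations, and that the listed equations imply $\Theta\circ\Phi = t\Theta$) are immediate from this same polynomial matching.

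For the existence-and-uniqueness assertion, I would take $(L,u,t,a,v)$ as free data, with $L$ invertible and $t\ne 0$. Invertibility of $L$ is in fact automatic from $\Phi$ being a diffeomorphism, since the Jacobian of $\phi$ is block triangular with diagonal blocks $L$ and $t$. The equation $\tilde L^*L = tI$ then uniquely solves for $\tilde L = t(L^*)^{-1}$, which is automatically invertible; the equation for $Q$ pins down $Q$; the linear equation $v + 2L^*(\tilde u - u) = 0$ uniquely solves for $\tilde u$ because $L^*$ is invertible; then $\tilde v$ is fixed by its linear equation, $\tilde Q$ by its quadratic equation, and finally $\tilde a$ is read off from the constant equation.

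I do not anticipate any conceptual obstacle; this is a polynomial-identity exercise of the same flavor as the proof of Lemma~3.2 earlier in the paper. The only points requiring genuine care are the sign bookkeeping in the expansion of $|Lx' + u - \tilde L y' - \tilde u|^2$ and in the constant term (where minor discrepancies with the displayed equations in the statement may surface and should be reconciled against the explicit examples of $\scriptg_{d,d}$ listed earlier), and the verification that each step of the uniqueness argument really uses only the invertibility of $L$, which is already available.
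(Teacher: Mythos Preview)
Your approach is correct and is precisely what the paper intends: the paper does not supply a proof, instead stating that ``the proof of the next lemma is a straightforward verification, left to the reader.'' Substituting \eqref{diffeoform} into $\Theta(\phi(x),\psi(y))=\lambda\Theta(x,y)$ and matching monomials in $(x',x_d,y',y_d)$ is exactly that verification, and your outline of the existence/uniqueness argument (solve for $\tilde L$, then $\tilde u$, then $\tilde v,\tilde Q,\tilde a$, using only invertibility of $L$ and $t\ne 0$) is the natural one. Your caveat about sign bookkeeping is well placed: a careful expansion shows, for instance, that the constant-term identity should read $a-\tilde a-|u-\tilde u|^2=0$ and that $\tilde Q(y')=t|y'|^2-|\tilde Ly'|^2$, consistent with the explicit examples listed earlier in the paper, so the discrepancies you anticipated do surface and are resolved exactly as you suggest.
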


\begin{definition}
A collection of $d$ points $x_j \in\reals^{d}$
is said to lie in general position if, writing 
$x_j=\big(x_{j,1},\cdots, x_{j,d}\big)$,
the $d\times d$ matrix
\[
\begin{pmatrix}
x_{1,1} & x_{1,2}& \cdots& x_{1,d-1} & 1
\\
x_{2,1} & x_{2,2}& \cdots& x_{2,d-1} & 1
\\
\vdots & \vdots & \vdots & \vdots & \vdots
\\
x_{d,1} & x_{d,2}& \cdots& x_{d,d-1} & 1
\end{pmatrix}
\]
is nonsingular.
\end{definition}
This definition is coordinate-dependent. The $d$-th coordinates
of the points $x_j$ do not enter into consideration.

%the $d$ vectors $x'_1-x'_d,x'_2-x'_d,\cdots,x'_{d-1}-x'_d$
%in $\reals^{d-1}$ are linearly independent.
%We say that an action of a group $G$ on $\reals^d$ is generically
%$d$-fold transitive if for any two sets
%$\{x_j: 1\le j\le d\}$ and
%$\{y_j: 1\le j\le d\}$ 
%of $d$ points in $\reals^d$ both lying in general position,
%there exists $g\in G$ satisfying $g(x_j)=y_j$
%for all $1\le j\le d$.

\begin{lemma}
The action of $\scriptg_d$ on $\reals^d$
is generically $d$-fold transitive.
That is,
for any two sets $\{x_j: 1\le j\le d\}$ 
and
$\{y_j: 1\le j\le d\}$  
of $d$ points in general position in $\reals^d$,
there exists
$\phi\in\scriptg_d$ satisfying 
\[\phi(x_j)=y_j \text{ for all $1\le j\le d$.}\]
\end{lemma}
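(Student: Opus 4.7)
My plan is to use the structural description of $\scriptg_d$ from the earlier lemma to reduce the $d$ vector conditions $\phi(x_j)=y_j$ to two decoupled linear systems, and then invoke the general position hypothesis to solve them. That lemma says every $\phi\in\scriptg_d$ has the form
\[
\phi(x',x_d)=\bigl(L_0x'+u,\; tx_d+v\cdot x'+\tilde a+|L_0x'|^2-t|x'|^2\bigr),
\]
with $L_0\in GL(d-1)$, $u,v\in\reals^{d-1}$, $t\in\reals\setminus\{0\}$, $\tilde a\in\reals$. This is $d^2+1$ parameters against $d^2$ scalar conditions.

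First I would work with the first $d-1$ coordinates, where the conditions read $L_0x_j'+u=y_j'$ for $j=1,\ldots,d$. The general position hypothesis on $\{x_j\}$ says precisely that $\{x_j'\}\subset\reals^{d-1}$ is an affinely independent $d$-tuple, so there exists a unique affine map sending $x_j'\mapsto y_j'$; this determines $L_0$ and $u$. The general position hypothesis on $\{y_j\}$ then forces $L_0$ to be invertible, since the image of an affine map with singular linear part lies in a proper affine subspace of $\reals^{d-1}$, contradicting affine independence of $\{y_j'\}$.

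Next I would handle the $d$-th coordinate, where once $L_0,u$ are fixed the conditions become
\[
t\bigl(x_{j,d}-|x_j'|^2\bigr)+v\cdot x_j'+\tilde a = y_{j,d}-|L_0x_j'|^2,\qquad j=1,\ldots,d,
\]
a system of $d$ linear equations in the $d+1$ unknowns $(t,\tilde a,v)$. Setting $t=1$ reduces this to a $d\times d$ system in $(\tilde a,v)$ whose $j$-th row is $(1,x_j')$; its determinant is precisely the general position determinant of $\{x_j\}$ and is therefore nonzero. Solving yields unique $(\tilde a,v)$, and since $t=1\neq 0$ and $L_0$ is invertible, the resulting $\phi$ lies in $\scriptg_d$ and satisfies $\phi(x_j)=y_j$.

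There is no real obstacle here: once the structure theorem is invoked, the problem is pure linear algebra, and general position is used exactly at the two points where nondegeneracy is needed---once on $\{x_j\}$ to invert both linear systems, and once on $\{y_j\}$ to ensure $L_0$ is invertible. The remaining parameter $t$ could have been assigned any nonzero value, reflecting a one-parameter stabilizer of a generic $d$-tuple in $\scriptg_d$.
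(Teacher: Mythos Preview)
Your proof is correct and follows essentially the same route as the paper's: solve for the affine map on the first $d-1$ coordinates, fix a nonzero value of $t$ (the paper leaves $t$ arbitrary, you set $t=1$), and then solve a $d\times d$ linear system for the affine part in the last coordinate using the general-position hypothesis on $\{x_j\}$. You are in fact slightly more careful than the paper at one point: you explicitly invoke general position of $\{y_j\}$ to guarantee that $L_0$ is invertible, a step the paper's proof leaves implicit.
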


\begin{proof}
There exists  a unique affine endomorphism $x'\mapsto Lx'+u$
of $\reals^{d-1}$ which maps $x'_j$ to $y'_j$ for all $1\le j\le d$.
Fix any $0\ne t\in\reals$.
Define $Q(y') = |L(y')|^2-t|y'|^2$ for $x'\in\reals^{d-1}$.
Let $a\in\reals$ and $v\in\reals^{d-1}$ be parameters to be specified.
Consider the diffeomorphism $\phi\in\scriptg_d$ defined by
$\phi(x',x_d) = (Lx'+u,tx_d+a + v\cdot x' + Q(x'))$.

We merely need to choose these parameters to satisfy the equations
\[tx_{j,d}+a+v\cdot x'_j + Q(x'_j) = y_{j,d}
\text{ for all $1\le j\le d$,} \] 
which is to say,
\[
v\cdot x'_j +a = 
y_{j,d}
-|L(x'_j)|^2 
-t \big(x_{j,d} -|x'_j|^2\big). 
\]
Since $t$ has already been specified, this is a system of $d$  inhomogeneous linear
equations for $(v,a)\in\reals^{(d-1)+1}$.
The assumption that
$\{x_j: 1\le j\le d\}$ is in general position means precisely that 
this system has full rank $d$.
Thus for any $t$, there exists a (unique) solution.
\end{proof}

%(Uniqueness)
%{\em Uniqueness does not seem to hold; the solution 
%constructed above does depend on $t$.}

The proof also demonstrates that $\phi$ is uniquely determined, up to the
``vertical'' scaling factor $t$.

\section{Affine surface measure} \label{section:affine}

Our definition of $\tpar$ uses the measure $dt$, rather than the surface measure
on the paraboloid induced from its inclusion into $\reals^d$. This 
measure $dt$ is entirely natural from a geometric viewpoint, which we now explain.

First, we recall the definition and properties of affine {\em arclength} measure.
This is a measure on a subinterval
$I\subset\reals$, associated to any sufficiently
smooth mapping $\gamma:I\to\reals^d$,
as follows:
Define 
\[
L_\gamma(t) = \big|\det\big(\gamma'(t),\gamma^{(2)}(t),\cdots,\gamma^{(d)}(t)\big)\big|
\]
where $\gamma^{(k)}(t) = d^k\gamma(t)/dt^k$.
Then the measure $\sigma_\gamma(I)$ is defined  to be 
\begin{equation}\sigma_\gamma(I) = \int_I |L_\gamma(t)|^{2/d(d+1)}\,dt.\end{equation}
See \cite{gugg} for some discussion.

Affine arclength measure $\sigma_\gamma$ enjoys two natural invariances,
which provide its {\em raison d'\^{e}tre}:
\newline
(i) For any $A\in Gl(\reals,d)$, $\sigma_{A\circ\gamma}(I) 
= |\det(A)|^{2/d(d+1)}\sigma_\gamma(I)$.
\newline
(ii) For any injective $C^d$ mapping $\phi:I\to\reals$,
$\sigma_{\gamma\circ\phi}(I) = \sigma_\gamma(\phi(I))$.
\newline
Both identities are easily verified.

There is a natural analogue in the codimension one case,
which we call affine surface measure.
Let $F$ be any $C^2$ mapping from an open subset of $\reals^{d-1}$
to $\reals^d$.
Write $F(t) = (F_1(t), \cdots, F_d(t))$.
For each pair of indices $i,j\in\{1,2,\cdots,d-1\}$, form
\[
F_{i,j}(t) = 
\det
\begin{pmatrix}
\frac{\partial F_1(t)}{\partial t_1}
& \cdots &
\frac{\partial F_1(t)}{\partial t_{d-1}}
&
\frac{\partial^2 F_1(t)}{\partial t_i \partial t_j}
\\
\frac{\partial F_2(t)}{\partial t_1}
& \cdots &
\frac{\partial F_2(t)}{\partial t_{d-1}}
&
\frac{\partial^2 F_2(t)}{\partial t_i \partial t_j}
\\
\vdots & \cdots & \vdots & \vdots \\
\frac{\partial F_d(t)}{\partial t_1}
& \cdots &
\frac{\partial F_d(t)}{\partial t_{d-1}}
&
\frac{\partial^2 F_d(t)}{\partial t_i \partial t_j}
\end{pmatrix}.
\]
From these quantities $F_{i,j}$ we form a $(d-1)\times(d-1)$ matrix,
and consider its determinant
\[
\scriptl_F(t) = 
\det
\begin{pmatrix}
F_{i,j}(t)
\end{pmatrix}_{i,j=1}^{d-1}.
\]
For any Borel set $U\subset\reals^{d-1}$ contained
in the domain of $F$, define the affine surface measure of $U$ to be 
\[
\sigma_F(U) = \int_U |\scriptl_F(t)|^{1/(d+1)}\,dt.
\]

Affine surface measure enjoys corresponding invariances.
\begin{proposition} \label{prop:affinesurface}
Let $A\in Gl(\reals,d)$ and let $\phi:V\to U$ be a $C^2$
diffeomorphism of an open set $V\subset\reals^{d-1}$
with a subset $U$ of the domain of $F$.
Then 
\begin{align} \label{affinesurface1}
\sigma_{A\circ F}(U) &= |\det(A)|^{(d-1)/(d+1)}\sigma_F(U)
\\
\sigma_{F\circ\phi}(V) &= \sigma_F(\phi(V)).
\label{affinesurface2}
\end{align}
\end{proposition}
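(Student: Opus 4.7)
The plan is to treat the two invariances separately, since the first is a direct consequence of multilinearity of the determinant, while the second requires a short computation with the chain rule.

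For \eqref{affinesurface1}, I would observe that $\partial_i(A\circ F) = A\,\partial_i F$ and $\partial_i\partial_j(A\circ F) = A\,\partial_i\partial_j F$, so every column of the $d\times d$ matrix defining $(A\circ F)_{i,j}$ equals $A$ applied to the corresponding column of the matrix defining $F_{i,j}$. Hence $(A\circ F)_{i,j}(t)=\det(A)\,F_{i,j}(t)$ pointwise, the $(d-1)\times(d-1)$ matrix in the definition of $\mathcal{L}_{A\circ F}$ is $\det(A)$ times its counterpart for $F$, and $\mathcal{L}_{A\circ F}=\det(A)^{d-1}\mathcal{L}_F$. Raising to the $1/(d+1)$ power yields the factor $|\det(A)|^{(d-1)/(d+1)}$.

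For \eqref{affinesurface2}, set $G=F\circ\phi$ and apply the chain rule:
\[
\partial_i G(t) = \sum_k \partial_i\phi_k(t)\,(\partial_k F)(\phi(t)),
\qquad
\partial_i\partial_j G(t) = \sum_{k,l}\partial_i\phi_k(t)\,\partial_j\phi_l(t)\,(\partial_k\partial_l F)(\phi(t)) + \sum_k \partial_i\partial_j\phi_k(t)\,(\partial_k F)(\phi(t)).
\]
The last sum lies in the linear span of $\{(\partial_k F)(\phi(t)):1\le k\le d-1\}$, which is exactly the span of the first $d-1$ columns of the $d\times d$ matrix defining $G_{i,j}$. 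Eliminating it by column operations does not change the determinant. Let $H(s)$ be the $d\times(d-1)$ matrix with columns $\partial_k F(s)$ and $J_\phi(t)$ the $(d-1)\times(d-1)$ Jacobian of $\phi$; the first $d-1$ columns of the matrix for $G_{i,j}$ equal $H(\phi(t))\,J_\phi(t)$. Using the block identity $[HJ_\phi \mid w]=[H\mid w]\cdot\mathrm{diag}(J_\phi,1)$, I obtain
\[
G_{i,j}(t) \;=\; \det(J_\phi(t))\sum_{k,l}F_{k,l}(\phi(t))\,\partial_i\phi_k(t)\,\partial_j\phi_l(t).
\]

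Thus $(G_{i,j})_{i,j=1}^{d-1}$ factors as $\det(J_\phi)\cdot J_\phi^{T}\cdot (F_{k,l}\circ\phi)_{k,l}\cdot J_\phi$, and taking determinants gives $\mathcal{L}_G=\det(J_\phi)^{d+1}\,(\mathcal{L}_F\circ\phi)$. Extracting the $1/(d+1)$-power yields $|\mathcal{L}_G(t)|^{1/(d+1)}=|\det J_\phi(t)|\,|\mathcal{L}_F(\phi(t))|^{1/(d+1)}$, and the standard change-of-variables formula $s=\phi(t)$ in the integral over $V$ converts this into $\sigma_F(\phi(V))$, completing the proof.

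The only step with any subtlety is the column-operation observation that removes the lower-order term $\sum_k\partial_i\partial_j\phi_k\cdot(\partial_k F\circ\phi)$, which is precisely the feature responsible for the invariance; without it, a nontrivial dependence on the second derivatives of $\phi$ would survive. Everything else reduces to the block-matrix factorization $[HJ_\phi\mid w]=[H\mid w]\cdot\mathrm{diag}(J_\phi,1)$ and the compatibility of the exponent $d-1$ (from the size of the matrix) with $1/(d+1)$ (from the definition of $\sigma$) so that $\det(J_\phi)^{d+1}$ produces exactly the Jacobian weight $|\det J_\phi|$ required by the change-of-variables formula.
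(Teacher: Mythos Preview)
Your proof is correct and shares the paper's key ideas: multilinearity of the determinant for \eqref{affinesurface1}, and for \eqref{affinesurface2} the column-operation observation that removes the $\sum_k\partial_i\partial_j\phi_k\,(\partial_k F\circ\phi)$ term, leading to the factorization $\scriptl_{F\circ\phi}=\det(J_\phi)^{d+1}(\scriptl_F\circ\phi)$ and hence the change-of-variables. The only difference is organizational: after the column-operation step the paper first reduces to linear $\phi$ (legitimate since $G_{i,j}$ then depends only on $D\phi(t)$) and then invokes \eqref{affinesurface1} to put $F$ into graph form $(t_1,\dots,t_{d-1},f(t))$ with $Df(0)=0$ before computing, whereas you carry out the general computation directly via the block identity $[HJ_\phi\mid w]=[H\mid w]\cdot\mathrm{diag}(J_\phi,1)$ and the factorization $(G_{i,j})=\det(J_\phi)\,J_\phi^{T}(F_{k,l}\circ\phi)J_\phi$. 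Your route avoids the normal-form reductions at the cost of slightly more bookkeeping; the paper's route is a bit more conceptual but takes two extra reductions. Both arrive at the same identity \eqref{affinesurface3}.
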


In the case where $F(t) = (t_1,\cdots,t_{d-1},|t|^2)$,
affine surface measure is a constant multiple of Lebesgue
measure on $\reals^{d-1}$, which is the measure used in
our definition of $\tpar$.
Thus our use of this measure, rather than of surface
measure on the paraboloid, has a natural geometric context.

\begin{proof}[Proof of Proposition~\ref{prop:affinesurface}]
Property \eqref{affinesurface1} follows directly from the definition; 
passing from $F$ to $A\circ F$ multiplies each
$F_{i,j}$ by $\det(A)$.
To derive \eqref{affinesurface2}, 
note first that if $G = F\circ\phi$, then for any indices $i,j$,
the determinant $G_{i,j}(t)$ depends only on the Jacobian matrix $D\phi(t)$
and on the first and second partial derivatives of $F$ at $\phi(t)$. 
Indeed, the chain rule produces an undesired contribution involving first derivatives of $F$ and
quadratic expressions involving $D\phi$ in the rightmost column,
but this contribution is a linear combination of the first $d-1$ columns and
hence can be eliminated via row operations.

Thus matters reduce to the case where $\phi$ is linear,
and it is no loss of generality to suppose that $t=0$ and $F(0)=0$.
We claim then that 
\begin{equation} \label{affinesurface3}
|\scriptl_{F\circ\phi}(0)| = |\det(\phi)|^{d+1}|\scriptl_F(0)|. 
\end{equation}
Taking into account both the formula $d\tau = |\det(D\phi)|\,dt$, where $\tau=\phi(t)$,
and the exponent $1/(d+1)$ in the definition of $\sigma_{F\circ\phi}$,
would then yield \eqref{affinesurface2}.

To derive \eqref{affinesurface3},
the invariance \eqref{affinesurface1} already established can be used
to reduce matters to the case where $F$ takes the form
$F(t) = (t_1,t_2,\cdots,t_{d-1},f(t))$
where $Df(0)=0$.
Then 
\[(F\circ\phi)_{i,j}(0)
= \det(\phi)
\cdot
\sum_{k,l}\partial^2_{i,j} f(0) \phi_{k,i}\phi_{l,j}
\]
where $\partial^2_{i,j} f$ denotes the second partial derivatives of $f$,
and $\phi = \begin{pmatrix} \phi_{k,i}\end{pmatrix}_{k,i=1}^{d-1}$.
Thus
\[
\scriptl_{F\circ\phi}(0) = \det(\phi)^{d-1} \cdot
\det\begin{pmatrix}
\sum_{k,l}\partial^2_{i,j} f(0) \phi_{k,i}\phi_{l,j}
\end{pmatrix}_{i,j=1}^{d-1}.
\]
This last matrix is simply the composition $\phi^*\circ\partial^2 f(0)\circ\phi$
where $\phi^*$ denotes the transpose of $\phi$.
Therefore
\[
\scriptl_{F\circ\phi}(0) 
= \det(\phi)^{d-1} \cdot
\det(\phi)^2\det(\partial^2 f(0))
=
\det(\phi)^{d+1} \scriptl_F(0),
\]
which is \eqref{affinesurface3}.
\end{proof}

\medskip
Inequalities for operators defined by convolution with (the push-forward onto a curve of)
affine arclength are studied in \cite{dlw} and \cite{betsyaffine}.
See the former paper for references to other related works.
One may ask whether there are analogues, for hypersurfaces equipped with
affine surface measure, of the results of \cite{dlw}, \cite{betsyaffine}
for curves equipped with affine arclength measure.

\end{document}